\documentclass{amsart}
\usepackage{amsmath,amsfonts,amssymb,amsthm,amsbsy,pstricks,enumerate,graphics,
pst-plot,mathtools,verbatim}

\title{Rees algebras and almost linearly presented Ideals}
\author{Jacob A. Boswell and Vivek Mukundan}
\address{Department of Mathematics, Purdue University, West Lafayette, IN 47907, USA}
\email{jaboswel@purdue.edu,vmukunda@purdue.edu}

\input xy
\xyoption{all}
\newtheorem{thm}{Theorem}
\newtheorem{cor}[thm]{Corollary}
\newtheorem{lem}[thm]{Lemma}
\newtheorem{prop}[thm]{Proposition}
\newtheorem{obs}[thm]{Observation}

\theoremstyle{definition}
\newtheorem{definition}[thm]{Definition}
\newtheorem{ex}[thm]{Example}

\newtheorem{rmk}[thm]{Remark}

\newtheorem*{rmkk}{Remark}
\newtheorem{set}[thm]{Setting}
\newtheorem{notation}[thm]{Notation}

\theoremstyle{remark}

\newcommand{\hght}{\mathrm{ht~}}
\newcommand{\coker}{\mathrm{coker~}}
\newcommand{\dep}{\mathrm{depth~}}
\newcommand{\gr}{\mathrm{grade~}}
\newcommand{\rk}{\mathrm{rank~}}
\newcommand{\reg}{\mathrm{reg}}
\newcommand{\rt}{\mathrm{rt}}
\newcommand{\sym}{\mathrm{Sym}}
\newcommand\scalemath[2]{\scalebox{#1}{\mbox{\ensuremath{\displaystyle #2}}}}
\newcommand\numberthis{\addtocounter{equation}{1}\tag{\theequation}}
\numberwithin{equation}{section}
\numberwithin{thm}{section}

   \newtheoremstyle{TheoremNum}
   {\topsep}{\topsep}              
   {\itshape}                      
   {}                              
   {\bfseries}                     
   {.}                             
   { }                             
   {\thmname{#1}\thmnote{ \bfseries #3}}
   \theoremstyle{TheoremNum}

\begin{document}

\begin{abstract}
	Consider a grade 2 perfect ideal $I$ in $R=k[x_1,\cdots,x_d]$ which is generated by forms of the same degree. Assume that the presentation matrix $\varphi$ is almost linear, that is, all but the last column of $\varphi$ consist of entries which are linear. For such ideals, we find explicit forms of the defining ideal of the Rees algebra $\mathcal{R}(I)$. We also introduce the notion of iterated Jacobian duals.
\end{abstract}
\maketitle
\section{Introduction}
In this paper we study the defining ideal of the Rees algebra of ideals in a commutative ring. The Rees algebra $\mathcal{R}(I)$ of an ideal $I$ in a commutative ring $R$ is defined to be $\mathcal{R}(I)=R[It]=R\oplus It\oplus I^2t^2\oplus\cdots$. The defining ideal of the Rees algebra is the kernel $\mathcal{A}$ of an epimorphism $\Psi :R[T_1,\dots, T_m]\rightarrow \mathcal{R}(I)$ given by $\Psi(T_i)=\alpha_it$, where $I=(\alpha_1,\dots, \alpha_m)$. Rees algebras provide an algebraic realization for the concept of blowing up a variety along a subvariety. The search for the implicit equations defining the Rees algebra is a classical and fundamental problem which has been studied for many decades. Some of the results in this direction include \cite{Vas1,HSV1,MU,MJ1,HoSV,CHW,LN1,CD,BCS,BC1,KPU1,BJ1,LP1}.

An important object in the study of Rees algebras is the symmetric algebra. The symmetric algebra $\sym(I)$ of an ideal $I$ has a presentation $$\sym(I)\cong R[T_1,\dots,T_m]/\mathcal{L},$$ where $\mathcal{L}=([T_1\cdots T_m]\cdot\varphi)$ and $\varphi$ is a presentation matrix of $I$. The map $\Psi$ above factors through the symmetric algebra. So it is enough to study the kernel of the map $\sym(I)\rightarrow \mathcal{R}(I)$. Traditionally, techniques for computing the defining ideal of $\mathcal{R}(I)$ often revolved around the notion of \textit{Jacobian dual}. For a commutative ring $R$ and an ideal $I$ with a presentation $R^s\xrightarrow{\varphi} R^m\rightarrow I\rightarrow 0$, the \textit{Jacobian dual} of $\varphi$ is defined to be a matrix $B(\varphi)$ with linear entries in $R[T_1,\dots, T_m]$ such that
\begin{equation}\label{IntroJacobianDual}
[T_1\cdots T_m]\cdot\varphi=[a_1\cdots a_r]\cdot B(\varphi)\mathrm{,~where~}I_1(\varphi)\subseteq (a_1,\dots,a_r).
\end{equation}

In the literature, the defining ideal of Rees algebras have been studied in great detail for many classes of ideals. For example, ideals generated by regular sequences (or \textit{d}-sequences,\cite{HSV1}) and grade 2 perfect ideals with linear presentation (\cite{MU}). We restrict our study to the case where $R=k[x_1,\dots,x_d]$ and $I=(\alpha_1,\dots ,\alpha_m)$ is a grade two perfect ideal minimally generated by homogeneous elements of the same degree. Using the Hilbert-Burch theorem, such an ideal can be realized as the ideal generated by the maximal minors of a $m\times m-1$ matrix with homogeneous entries of constant degree along each column. We further restrict the presentation matrix $\varphi$ of $I$ to be \textit{almost linearly presented}, that is, all but the last column of $\varphi$ are linear and the last column consists of homogeneous entries of arbitrary degree $n\geq 1$. The $G_d$ condition is also an important ingredient in the study of $\mathcal{A}$. Here, the $G_d$ condition means that $\mu(I_p)\leq \mathrm{ht~}p~\mathrm{~for~every~} p\in V(I)~\mathrm{~with~}\mathrm{ht~}p\leq d-1$. An earlier study of Rees algebra of ideals of this type, when $d=2$, was done by Kustin, Polini and Ulrich in \cite{KPU1}. We generalize their results for $d>2$ and also present another form of the defining ideal of $\mathcal{R}(I)$.

The $G_d$ condition forces some power of the ideal $(\underline{x})$ to annihilate the kernel of the map $\sym(I)\rightarrow \mathcal{R}(I)$ i.e, $\mathcal{A}=\mathcal{L}:(\underline{x})^\infty$. Since $\mathrm{dim~}\mathcal{R}(I)=d+1$, notice that $\mathcal{A}=\mathcal{L}:(\underline{x})^\infty$ is a prime ideal of height $m-1$.

One of the recurring features of the proofs in this paper is that the ideal $I_d(B(\varphi'))$ attains the maximum possible height, namely $m-d-1$ (where $\varphi'$ is a matrix obtained from $\varphi$ by removing the last column). This led us to study $\mathcal{L}:(\underline{x})^\infty$ in a more general setting. \vskip 2mm
\begin{thm}\label{introgoup}
Let $R$ be a Cohen Macaulay local ring containing a field $k$ and $\underline{a} = a_1, \ldots, a_r$ an $R$-regular sequence with $r>0$. Let $S = R[T_1, \ldots, T_m]$ with $T_1, \ldots, T_m$ indeterminates over $R$ and $\psi$ be an $r \times s$ matrix with entries in $k[T_1, \ldots, T_m]$ so that each column consists of homogeneous elements of the same positive degree. If $(\underline{a} \cdot \psi) :_S (\underline{a})^\infty$ is a prime ideal of height $s$, then $I_r(\psi)$ is a prime ideal of $k[T_1, \ldots, T_m]$ of height $\mathrm{max}\{0, s-r+1\}$ and $(\underline{a}\cdot \psi) :_S (\underline{a})$ is a geometric residual intersection. Furthermore, \[(\underline{a}\cdot \psi) :_S (\underline{a})^\infty = (\underline{a}\cdot \psi) :_S (\underline{a}) = (\underline{a}\cdot \psi) + I_r(\psi).\]
	\end{thm}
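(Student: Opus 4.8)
The plan is to reduce the entire statement to the single identity $\mathcal A:=(\underline a\cdot\psi):_S(\underline a)^\infty=(\underline a\cdot\psi)+I_r(\psi)$, and then to extract that identity from the structure theory of residual intersections of a complete intersection, the height hypothesis on $\mathcal A$ serving only to get that theory started. Write $L=(\underline a\cdot\psi)$, $\mathfrak a=(\underline a)$, $J=L:_S\mathfrak a$, and grade $S$ with $R$ in degree $0$ and the $T_i$ in degree $1$. First I would record the formal consequences of the set‑up. Since $R$ contains $k$ it is a free $k$‑module, so $S=R\otimes_k k[\underline T]$ is faithfully flat over $k[\underline T]$; hence $\hght_S\mathfrak p S=\hght_{k[\underline T]}\mathfrak p$ for every ideal $\mathfrak p\subseteq k[\underline T]$, the inclusion $k[\underline T]\hookrightarrow S$ stays injective modulo $\mathfrak q S$ for any $\mathfrak q\subseteq R$, and $\underline a$ remains a regular sequence in $S$ and in the relevant Cohen--Macaulay, equidimensional quotients and localizations. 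Cramer's rule on the $r\times r$ submatrices of $\psi$ gives $I_r(\psi)\cdot\mathfrak a\subseteq L$, hence $L+I_r(\psi)\subseteq J\subseteq\mathcal A$. Each generator of $L$ lies in $\mathfrak a$ and is $T$‑homogeneous of positive degree, so $L\subseteq\mathfrak a$, $L\cap k[\underline T]=0$, $\mathcal A\cap R=0$, and $V(L)=V(\mathcal A)\cup V(\mathfrak a)$, whence $\hght L=\min\{r,s\}$. If $\mathfrak a\subseteq\mathcal A$ then $\mathfrak a^N\subseteq L$ for $N\gg0$ and $\mathcal A=S$, contradicting primality; so $\mathfrak a\not\subseteq\mathcal A$, which forces $\mathcal A$ to be a minimal prime of $L$ (hence of $J$), forces $\hght(\mathcal A+\mathfrak a)\ge s+1$, and makes $S_\mathcal A$ a regular local ring of dimension $s$ whose maximal ideal is $L_\mathcal A$. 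The case $r>s$ (so $I_r(\psi)=0$) now follows: $L$ is a height‑$s$ complete intersection in the Cohen--Macaulay ring $S$, hence unmixed, so $V(L)=V(\mathcal A)\cup V(\mathfrak a)$ with $\mathfrak a\not\subseteq\mathcal A$ forces $L$ to be $\mathcal A$‑primary, whence $\mathcal A=L:_S\mathfrak a^\infty=L=J$, a geometric $s$‑residual intersection of $\mathfrak a$ because $J+\mathfrak a=\mathfrak a$ has height $r\ge s+1$. So assume henceforth $r\le s$.

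Granting the Claim $\mathcal A=L+I_r(\psi)$, the remainder is bookkeeping. Then $\mathcal A\cdot\mathfrak a=L\mathfrak a+I_r(\psi)\mathfrak a\subseteq L$, so $\mathcal A\subseteq L:_S\mathfrak a=J$ and therefore $J=\mathcal A=L:_S\mathfrak a^\infty=L+I_r(\psi)$. Intersecting with $k[\underline T]$ and using $L\cap k[\underline T]=0$ gives $I_r(\psi)=\mathcal A\cap k[\underline T]$, a prime ideal. Its height is computed by sandwiching: $\mathcal A+\mathfrak a=I_r(\psi)+\mathfrak a$ (since $L\subseteq\mathfrak a$) has height $\ge s+1$, while $S/(I_r(\psi)+\mathfrak a)\cong(R/\mathfrak a)\otimes_k\bigl(k[\underline T]/I_r(\psi)\bigr)$ has dimension at least $\dim(R/\mathfrak a)+\dim\bigl(k[\underline T]/I_r(\psi)\bigr)$ because $R/\mathfrak a$ is free over $k$, giving $\hght(I_r(\psi)+\mathfrak a)=r+\hght I_r(\psi)$; hence $\hght I_r(\psi)\ge s-r+1$, and the Eagon--Northcott bound gives the reverse inequality, so $\hght I_r(\psi)=s-r+1=\max\{0,s-r+1\}$. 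Finally $J=L:_S\mathfrak a$ is an $s$‑residual intersection of the complete intersection $\mathfrak a$ (as $L\subseteq\mathfrak a$ is $s$‑generated and $\hght J=\hght\mathcal A=s$), and it is geometric because $\hght(J+\mathfrak a)=\hght(\mathcal A+\mathfrak a)\ge s+1$.

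It remains to prove the Claim; the inclusion $\supseteq$ is Cramer's rule, so the content is $\mathcal A\subseteq L+I_r(\psi)$. By the structure theory of residual intersections of the complete intersection $\mathfrak a$ --- realized through the generalized Koszul complex built from the Koszul complex of $\underline a$ and the matrix $\psi$ --- once one knows $\hght I_r(\psi)\ge s-r+1$ one obtains both $L:_S\mathfrak a=L+I_r(\psi)$ and $(L+I_r(\psi)):_S\mathfrak a=L+I_r(\psi)$, and iterating these yields $\mathcal A=\bigcup_N L:_S\mathfrak a^{N}=L+I_r(\psi)$. Thus everything comes down to bootstrapping the estimate $\hght I_r(\psi)\ge s-r+1$ out of the sole hypothesis that $\mathcal A$ is prime of height $s$, and this is the step I expect to be the main obstacle. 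The route I would take is a contradiction argument at the degeneracy locus of $\psi$: if $\hght I_r(\psi)\le s-r$, then, using $\mathfrak a/L\cong\coker[\psi\mid\kappa]$ with $\kappa$ the Koszul matrix of $\underline a$ (so that $J$ is the annihilator of the $S$‑module $\mathfrak a/L$, $\sqrt J=\sqrt{I_r([\psi\mid\kappa])}$, and $I_r([\psi\mid\kappa])+\mathfrak a=I_r(\psi)+\mathfrak a$), one locates a prime $\mathfrak P$ with $\mathfrak a+I_r(\psi)\subseteq\mathfrak P$, $J\subseteq\mathfrak P$ and $\hght\mathfrak P=r+\hght I_r(\psi)\le s$; then $\mathcal A\not\subseteq\mathfrak P$, so $\mathcal A_\mathfrak P=S_\mathfrak P$, $\mathfrak a^{N}_{\mathfrak P}\subseteq L_\mathfrak P$, and one must turn this --- together with the rigidity that $L$ is, generically along $V(\mathcal A)$, a complete intersection of codimension $s$ --- into a contradiction with the irreducibility of $L:_S\mathfrak a^\infty$. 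Here the homogeneity of the entries of $\psi$ over $k[\underline T]$ and the separation of the variables $\underline x$ from the variables $\underline T$ (which produced $L\cap k[\underline T]=0$ and $\mathcal A\cap R=0$) should be used decisively, since --- as simple local examples show --- the local configuration at $\mathfrak P$ is not by itself contradictory, but is only incompatible with the global primality of $\mathcal A$.
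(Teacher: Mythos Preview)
Your reduction is sound and matches the paper's own Observation~2.4: once one knows $\hght I_r(\psi)=\max\{0,s-r+1\}$, the structure theory of residual intersections of a complete intersection (via \cite{BKM}, \cite{HU2}) gives $J=L+I_r(\psi)$, its Cohen--Macaulayness, unmixedness, and hence $J=\mathcal A$; the primality of $I_r(\psi)$ then follows by contracting to $k[\underline T]$. Your treatment of the case $r>s$ is also fine, and your ``granting the Claim'' paragraph is essentially the content of that observation.

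The gap is exactly where you flag it: you do not prove $\hght I_r(\psi)\ge s-r+1$. Your proposed contradiction argument---pick a prime $\mathfrak P\supseteq\mathfrak a+I_r(\psi)$ of height $\le s$, note $\mathcal A_{\mathfrak P}=S_{\mathfrak P}$, and try to contradict irreducibility of $\mathcal A$---is not carried out, and as you yourself observe, the local picture at $\mathfrak P$ is not by itself contradictory. The paper does \emph{not} proceed this way. Instead it runs an induction on $s$, and the engine that makes the induction go is a separate ``go-down'' proposition (Proposition~2.2): in a positively graded Noetherian ring, if $(L_1,\ldots,L_s):\mathfrak a^\infty$ is prime of height $s$ (with the $L_i\in\mathfrak aS\cap S_+$ and $(L_1,\ldots,L_{s-1})$ homogeneous), then $(L_1,\ldots,L_{s-1}):\mathfrak a^\infty$ is prime of height $s-1$. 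The proof of that proposition is an application of Kaplansky's exercise that a prime strictly contained in a principal prime $(q)$ lies in $\bigcap_i(q^i)$, applied in $S_a/(L_1,\ldots,L_{s-1})_a$ with $q=L_s$; positivity of the grading then kills $\bigcap_i(q^i)$. With this in hand, the paper deletes the last column of $\psi$ to obtain $\psi'$, knows by induction that $\hght I_r(\psi')=\max\{0,s-r\}$ and that $(\underline a\cdot\psi'):(\underline a)=(\underline a\cdot\psi')+I_r(\psi')$ is prime and a geometric residual intersection, and then argues (via Cramer's rule and a degree count on a specific nonzero $(r-1)\times(r-1)$ minor $\delta$) that $I_r(\psi)\not\subseteq I_r(\psi')$, hence $I_r(\psi)$ contains a regular element modulo $I_r(\psi')$, giving $\hght I_r(\psi)=s-r+1$.

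So the missing idea in your proposal is this column-deletion induction together with the Kaplansky/graded argument that lets you drop from $s$ to $s-1$ while preserving primality and the height count. Without something playing that role, your outline does not close.
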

\vskip 1mm
As a consequence, in $R=k[x_1,\dots,x_d]$, consider  an $m\times s$ matrix $\phi$ with homogeneous entries of constant degree along each column and $\phi'$ an $m\times s'$ submatrix of $\phi$ consisting of columns of $\phi$ whose entries are linear. Now if $(\underline{x}\cdot B(\phi)):(\underline{x})^\infty$ is a prime ideal of height $s$ in $R[T_1,\dots,T_m]$, then Theorem \ref{introgoup} allows us to prove that   $(\underline{x}\cdot B(\phi')):(\underline{x})^\infty=(\underline{x}\cdot B(\phi')):(\underline{x})=(\underline{x}\cdot B(\phi'))+I_d(B(\phi'))$  is a prime ideal of of height $s'$ (we refer to Section 2 for the details).

Applying this result to the case of almost linearly presented grade 2 perfect ideals satisfying the $G_d$ condition in $R=k[x_1,\dots,x_d]$, we prove that $\mathcal{L}:(\underline{x})=\mathcal{L}+I_d(B(\varphi))$. Furthermore $I_d(B(\varphi'))$ attains maximum height. We show that, for the above type of ideals, $\mathcal{A}=\mathcal{L}:(\underline{x})^n$, which constitutes the first form of the defining ideal of $\mathcal{R}(I)$ we obtain in this paper. Recall that $n$ is the degree of the entries in the last column of the presentation matrix $\varphi$. \vskip 2mm
\begin{thm}\label{introcolonform}
	Let $R=k[x_1,\dots,x_d]$ and $I$ be a grade 2 perfect ideal satisfying the $G_d$ condition. If $I$ is almost linearly presented, that is, all but the last column of the presentation matrix are linear and the last column consist of homogeneous entries of arbitrary degree $n\geq 1$, then the defining ideal of $\mathcal{R}(I)$ sastisfy $\mathcal{A}=\mathcal{L}:(\underline{x})^n$
\end{thm}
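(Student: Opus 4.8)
The starting point is the fact, recalled in the Introduction, that the $G_d$ condition forces $\mathcal{A} = \mathcal{L}:(\underline{x})^\infty$. The plan is to show that the ascending chain $\mathcal{L} \subseteq \mathcal{L}:(\underline{x}) \subseteq \mathcal{L}:(\underline{x})^2 \subseteq \cdots$ already stabilizes at the $n$-th step. I would first record the elementary observation that in any Noetherian ring the equality $\mathcal{L}:\mathfrak{a}^{n} = \mathcal{L}:\mathfrak{a}^{n+1}$ already implies $\mathcal{L}:\mathfrak{a}^{n} = \mathcal{L}:\mathfrak{a}^{k}$ for every $k \ge n$, hence $\mathcal{L}:\mathfrak{a}^{n} = \mathcal{L}:\mathfrak{a}^{\infty}$; combined with $\mathcal{A} = \mathcal{L}:(\underline{x})^\infty$ this reduces the theorem to the single equality $\mathcal{L}:(\underline{x})^{n} = \mathcal{L}:(\underline{x})^{n+1}$.

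To prove that, I would work in the bigraded ring $S = k[x_1,\dots,x_d,T_1,\dots,T_m]$, write $\mathcal{L} = (\underline{x}\cdot B(\varphi))$ with $B(\varphi) = [\,B(\varphi')\mid b\,]$ where the columns of $B(\varphi')$ have entries that are linear forms in $T_1,\dots,T_m$ alone (bidegree $(0,1)$ in $(\underline{x},\underline{T})$) and $b$ has entries of bidegree $(n-1,1)$, and then run the iterated Jacobian dual construction: put $B^{(1)} = B(\varphi)$, and, given $B^{(i)}$, factor each generator of $\mathcal{L}:(\underline{x})^{i}$ that still lies in $(\underline{x})$ in the form $[x_1\cdots x_d]\cdot c$ and adjoin the columns $c$ to obtain $B^{(i+1)}$. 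The statement to prove by induction on $i$ is
\[
\mathcal{L}:(\underline{x})^{i} \;=\; \mathcal{L} + I_d\!\left(B^{(i)}\right),
\]
the base case being the equality $\mathcal{L}:(\underline{x}) = \mathcal{L} + I_d(B(\varphi))$ already established, and the behaviour of the linear block $B(\varphi')$ being controlled by the consequence of Theorem~\ref{introgoup}: $(\underline{x}\cdot B(\varphi')):(\underline{x}) = (\underline{x}\cdot B(\varphi')) + I_d(B(\varphi'))$ is prime of height $m-2$ and $I_d(B(\varphi'))$ has maximal height $m-d-1$, so the contribution of the linear columns stabilizes at the first step and only the columns recording $b$ can force further saturation.

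The inductive step is the heart of the matter, and I expect it to be the main obstacle. One inclusion, $\mathcal{L}+I_d(B^{(i)}) \subseteq \mathcal{L}:(\underline{x})^{i}$, is a cofactor/Cramer's-rule computation: one multiplies a $d\times d$ minor of $B^{(i)}$ by $(\underline{x})^{i}$ and repeatedly uses the relations $[T_1\cdots T_m]\,\varphi = [x_1\cdots x_d]\,B(\varphi)$ together with the factorizations $[x_1\cdots x_d]\cdot c$ defining the adjoined columns to land in $\mathcal{L}$. The reverse inclusion $\mathcal{L}:(\underline{x})^{i} \subseteq \mathcal{L}+I_d(B^{(i)})$ is the delicate one, since a priori the colon ideal could contain elements not visible among the $d\times d$ minors of the augmented matrix. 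Handling this calls for a bigraded, iterated version of Theorem~\ref{introgoup}: one shows that at every stage $\mathcal{L}+I_d(B^{(i)})$ is unmixed --- in fact prime of height $m-1$ --- so that, being sandwiched between $\mathcal{L}$ and the prime $\mathcal{A}$ of the same height in the catenary domain $S$, it cannot differ from them by a gap; and a bigraded degree count shows that, because $b$ has bidegree $(n-1,1)$, each iteration lowers by exactly one the $\underline{x}$-degree of the generators carrying the contribution of $b$.

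Finally, for termination: after $n$ iterations every new generator of $I_d(B^{(n)})$ has $\underline{x}$-degree $0$, hence does not lie in $(\underline{x})$, so the construction adjoins no further columns and $B^{(n)} = B^{(n+1)}$; the induction then gives $\mathcal{L}:(\underline{x})^{n} = \mathcal{L}+I_d(B^{(n)}) = \mathcal{L}:(\underline{x})^{n+1}$, and with the reduction of the first paragraph this yields $\mathcal{A} = \mathcal{L}:(\underline{x})^{n}$. The point I would be most careful about is that the number of iterations needed is exactly $n$: that the single nonlinear column genuinely costs $n$ saturation steps and not fewer --- which is where the maximal height of $I_d(B(\varphi'))$ and the precise bigraded structure enter --- and not more, which is the content of the degree-drop bookkeeping.
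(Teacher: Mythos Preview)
Your reduction to showing $\mathcal{L}:(\underline{x})^{n}=\mathcal{L}:(\underline{x})^{n+1}$ is fine, and so is the base case $\mathcal{L}:(\underline{x})=\mathcal{L}+I_d(B(\varphi))$. The gap is the inductive step. The equality $\mathcal{L}:(\underline{x})^{i}=\mathcal{L}+I_d(B^{(i)})$ that you want to prove for all $i\le n$ is in fact \emph{false} in general under the hypotheses of the theorem: the paper exhibits an almost linearly presented grade~2 perfect ideal in $k[x_1,x_2]$ with $n=2$ and $\mu(I)=4$ for which $\mathcal{A}=\mathcal{L}:(\underline{x})^{2}$ strictly contains $\mathcal{L}+I_2(B_2(\varphi))$, and further iterations do not close the gap. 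Consequently your claim that each $\mathcal{L}+I_d(B^{(i)})$ is prime of height $m-1$ cannot hold. The ``bigraded iterated version of Theorem~\ref{introgoup}'' you invoke does not exist: that theorem needs the matrix to have entries in $k[T_1,\dots,T_m]$, which is exactly what fails for $B^{(i)}$ once $i\ge 2$ and the column coming from the nonlinear part has been processed. The residual-intersection identity $(\underline{x}\cdot M):(\underline{x})=(\underline{x}\cdot M)+I_d(M)$ that underlies the base case requires a height bound tied to the number of columns of $M$, and that number grows without control as you iterate.

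The paper's argument bypasses iterated Jacobian duals entirely and is much shorter. It uses only the single equality $\mathcal{L}:(\underline{x})=\mathcal{L}+I_d(B(\varphi))$ together with the elementary lemma: in a Noetherian ring, if $I^{n}\cap(0:I)=0$ then $I^{n}(0:I^{\infty})=0$. One checks the hypothesis in $\sym(I)=S/\mathcal{L}$ by a bidegree count---$I_d(B(\varphi))$ is generated in $\underline{x}$-degree at most $n-1$, so $((\underline{x})^{n}+\mathcal{L})\cap(\mathcal{L}+I_d(B(\varphi)))=\mathcal{L}$---and the lemma then gives $(\underline{x})^{n}\mathcal{A}\subseteq\mathcal{L}$, i.e.\ $\mathcal{A}\subseteq\mathcal{L}:(\underline{x})^{n}$, while the reverse inclusion is immediate. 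The moral is that one does not need to identify the intermediate colons $\mathcal{L}:(\underline{x})^{i}$ explicitly; knowing only $\mathcal{L}:(\underline{x})$ and its $\underline{x}$-degrees suffices.
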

\vskip 1mm
This form is computationally inexpensive compared to $\mathcal{L}:(\underline{x})^\infty$. Notice that when $n=1$ the presentation matrix $\varphi$ is linear. Theorem \ref{introcolonform} now shows that $\mathcal{A}=\mathcal{L}:(\underline{x})=\mathcal{L}+I_d(B(\varphi))$. The equality $\mathcal{A}=\mathcal{L}+I_d(B(\varphi))$ is known as the \textit{expected form} of the defining ideal of the Rees algebra. This recovers the result proved by Morey, Ulrich in \cite{MU}.


The second form of the defining ideal is obtained by following work of Kustin, Polini, Ulrich in \cite{KPU1}. Among other results, they characterize the defining ideal of the Rees algebra of such ideals in $k[x_1,x_2]$, but, some of the techniques presented in this paper resisted generalization to $k[x_1,\dots,x_d]$. One of the most glaring deficiencies in the case of $k[x_1,\dots, x_d]$ is the lack of a characterization of the presentation matrix $\varphi$ similar to \cite[2.1]{KPU1}. We first construct a Cohen-Macaulay ring $A$ in which the defining ideal of the Rees algebra, $\overline{\mathcal{A}}$, is a height one prime ideal. The ring $A$ being a Cohen-Macaulay domain is attributed to the fact that $I_d(B(\varphi'))$ has maximal height. Now we construct an ideal $K$ in $R[T_1,\dots,T_m]$ such that $\overline{K}$ is an height one unmixed ideal in $A$ satisfying $\overline{\mathcal{A}}\cong_A \overline{K}^{(n)}$. \vskip 2mm
\begin{thm}\label{intro2ndform}
Let $R=k[x_1,\dots, x_d]$ and $I$ be a grade 2 perfect ideal. Suppose that the presentation matrix $\varphi$ of $I$ is almost linear, that is, all but the last column of $\varphi$ are linear and the last column consist of homogeneous entries of degree $n$. Also assume that $I$ satisfies the $G_d$ condition and $\mu(I)>d$. Let $\varphi '$ denote the linear matrix obtained from $\varphi$ by removing the last column \textup{(}the non-linear column\textup{)}. Let $A=R[T_1,\dots, T_m]/([x_1\cdots x_d]\cdot B(\varphi'),I_d(B(\varphi')))$ and 
$$K=(x_d)+([x_1\cdots x_{d-1}]\cdot B)+I_{d-1}(B)$$
where $B$ is obtained from $B(\varphi')$ by removing the last row of $B(\varphi')$. Then in the ring $A$, the defining ideal of $\mathcal{R}(I)$ satisfies $\overline{\mathcal{A}}\cong_A \overline{K}^{(n)}(-1)$.
\end{thm}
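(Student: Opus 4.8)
The plan is to work inside the ring $A$, express $\overline{\mathcal{A}}$ there as a single saturation, and identify that saturation with $\overline{K}^{(n)}$ up to a graded twist by means of a divisor-theoretic computation together with one explicit rational multiplier. First I would pin down the structure of $A$. By the consequence of Theorem~\ref{introgoup} recorded after its statement, $J:=([x_1\cdots x_d]\cdot B(\varphi'))+I_d(B(\varphi'))$ is a prime of height $m-2$ which is a geometric residual intersection of the complete intersection $(\underline{x})$; hence $A=R[T_1,\dots,T_m]/J$ is a Cohen--Macaulay domain of dimension $d+2$, and since $I_d(B(\varphi'))$ is prime, $\mathfrak{P}:=(\underline{x})A\cong k[T_1,\dots,T_m]/I_d(B(\varphi'))$ is a height one prime of $A$. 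One also verifies, using the $G_d$ condition and the maximal height of $I_d(B(\varphi'))$, that $A$ is normal, so its divisorial ideals behave as expected. Now let $\varphi_m$ be the last column of $\varphi$ and $f=[T_1\cdots T_m]\cdot\varphi_m$ the unique generator of $\mathcal{L}$ coming from it; then $\mathcal{L}+J=J+(f)$, and because $J\subseteq\mathcal{A}$ and $\mathcal{A}=\mathcal{L}:(\underline{x})^n$ by Theorem~\ref{introcolonform}, passing to $A$ gives
\[
\overline{\mathcal{A}}=(\overline{f}):_A(\underline{x})^\infty=(\overline{f}):_A(\underline{x})^n=\overline{f}\cdot\bigl(A:_{\mathrm{Frac}(A)}(\underline{x})^n\bigr),
\]
the last step using that $A$ is a domain and $f\in(\underline{x})^n$ (each monomial of $f$ has $x$-degree $n$). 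So $\overline{\mathcal{A}}$ is the divisorial ideal $A:(\underline{x})^n$ twisted by $\overline{f}$, and its divisor class is $-n[\mathfrak{P}]$.

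Next I would analyze $\overline{K}$. From $[x_1\cdots x_d]\cdot B(\varphi')=0$ in $A$, splitting off the last row $\underline{\ell}$ of $B(\varphi')$ gives $[x_1\cdots x_{d-1}]\cdot B\equiv -x_d\,\underline{\ell}\pmod{J}$, so the generators $[x_1\cdots x_{d-1}]\cdot B$ of $K$ lie in $(x_d)+J$ and $\overline{K}=(\overline{x_d})+\overline{I_{d-1}(B)}$. Cramer's rule applied to the relations $\sum_{k<d}x_kB_{kj}=-x_d\ell_j$ yields $x_i\,\overline{\delta}\in(\overline{x_d})$ in $A$ for every $(d-1)$-minor $\delta$ of $B$ and every $i$, hence $\overline{I_{d-1}(B)}\subseteq(\overline{x_d}):_A(\underline{x})$. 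Moreover $A/\overline{K}\cong k[x_1,\dots,x_{d-1},T_1,\dots,T_m]/\bigl(([x_1\cdots x_{d-1}]\cdot B)+I_{d-1}(B)\bigr)$, and a further application of the (iterated) Jacobian dual machinery exhibits this quotient as a residual intersection of $(x_1,\dots,x_{d-1})$, hence Cohen--Macaulay; therefore $\overline{K}$ is unmixed of height one, its minimal primes are height one primes containing $x_d$ but different from $\mathfrak{P}$ (a minor of $B$ is not in $I_d(B(\varphi'))$), and at each of them $\overline{K}$ localizes to $(x_d)$.

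The heart of the argument is then a divisor-class computation. From the local description of $\overline{K}$ one gets $\overline{K}^{(n)}=(x_d^{\,n}):_A(\underline{x})^\infty$ and, in $\mathrm{Cl}(A)$, $[\overline{K}]=\bigl(v_{\mathfrak{P}}(\overline{K})-v_{\mathfrak{P}}(x_d)\bigr)[\mathfrak{P}]$, where $v_{\mathfrak{P}}$ is the valuation of the DVR $A_{\mathfrak{P}}$. The Cramer relations give $v_{\mathfrak{P}}(\overline{K})\ge v_{\mathfrak{P}}(x_d)-1$, since $(\underline{x})A_{\mathfrak{P}}$ is principal. The reverse inequality---that some $(d-1)$-minor of the iterated Jacobian dual $B$ has order exactly $v_{\mathfrak{P}}(x_d)-1$ along $\mathfrak{P}$, equivalently $\overline{K}\not\subseteq\mathfrak{P}^{(v_{\mathfrak{P}}(x_d))}$---is the step I expect to be the main obstacle; it is precisely where the explicit normal form for $\varphi$ that drives \cite{KPU1} in the case $d=2$ is unavailable for $d>2$, and it is the reason for setting up the iterated Jacobian duals. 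Granting this, $[\overline{K}]=-[\mathfrak{P}]$, hence $[\overline{K}^{(n)}]=-n[\mathfrak{P}]=[\overline{\mathcal{A}}]$, so $\overline{\mathcal{A}}$ and $\overline{K}^{(n)}$ are isomorphic as $A$-modules.

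Finally I would make the isomorphism, and the twist, explicit. The two order computations above, together with the Cramer relations, give the ideal identity $\overline{f}\cdot\overline{K}^{(n)}=x_d^{\,n}\cdot\overline{\mathcal{A}}$ in $A$; since both sides are unmixed of height one it suffices to check this after localizing at each height one prime. Therefore multiplication by $\overline{f}/x_d^{\,n}\in\mathrm{Frac}(A)$ carries $\overline{K}^{(n)}$ isomorphically onto $\overline{\mathcal{A}}$, and as this element is homogeneous of $T$-degree $1$ the isomorphism raises internal degree by one, which is exactly $\overline{\mathcal{A}}\cong_A\overline{K}^{(n)}(-1)$.
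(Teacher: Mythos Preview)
Your endpoint---the identity $\overline{g}\cdot\overline{K}^{(n)}=\overline{x_d}^{\,n}\cdot\overline{\mathcal{A}}$, verified locally at height one primes, giving the explicit isomorphism via multiplication by $\overline{g}/\overline{x_d}^{\,n}$---is exactly what the paper proves. But the route you take to get there has one unjustified step and one misidentified obstacle, both of which the paper's more direct argument avoids.

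The normality claim for $A$ is not established in the paper and is not needed. The paper never invokes a divisor class group; it works only with the Cohen--Macaulay property. It defines $\mathcal{D}=\overline{g}\,\overline{K}^{(n)}/\overline{x_d}^{\,n}$, observes (via the depth lemma in the CM ring $A$) that a proper $A$-ideal satisfies $S_2$ as a module iff it is height one unmixed, and transports this property from $\overline{K}^{(n)}$ to $\mathcal{D}$ along the isomorphism. Then $\mathcal{D}\subseteq\overline{\mathcal{A}}$ is checked locally at the associated (height one) primes of $\mathcal{D}$. No $R_1$, no class group.

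More importantly, the step you single out as the main obstacle is not one. The paper disposes of it by a degree argument, not by iterated Jacobian duals: from $\hght\bigl((\tilde L_1,\dots,\tilde L_{m-2})+I_{d-1}(B)\bigr)=m-2$ and $m>d$ one forces $I_{d-1}(B)\neq 0$, and since $I_{d-1}(B)\subset k[T_1,\dots,T_m]$ any nonzero minor lies outside $\mathfrak{P}=(\underline{x})A$, so $\overline{K}\not\subseteq\mathfrak{P}$ and $\overline{K}_{\mathfrak{P}}=A_{\mathfrak{P}}$. Your own Cramer relation $\overline{(\underline{x})}\,\overline{K}\subseteq(\overline{x_d})$ then localizes to $(\underline{x})A_{\mathfrak{P}}=(\overline{x_d})A_{\mathfrak{P}}$; in your language $v_{\mathfrak{P}}(x_d)=1$ and $v_{\mathfrak{P}}(\overline{K})=0$, so the ``reverse inequality'' is immediate. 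The local check is now trivial: at $\mathfrak{P}$ one has $\overline{\mathcal{A}}_{\mathfrak{P}}=A_{\mathfrak{P}}$ and, since $g\in(\underline{x})^n$, $(\overline{g})_{\mathfrak{P}}=(\overline{x_d}^{\,i})_{\mathfrak{P}}$ with $i\leq n$, whence $\mathcal{D}_{\mathfrak{P}}\supseteq A_{\mathfrak{P}}$; at every other height one prime $p$ you already have $\overline{K}_p=(\overline{x_d})_p$ and $\mathcal{D}_p=(\overline{g})_p=\overline{\mathcal{A}}_p$.
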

\vskip 1mm
This characterization is a generalized version of \cite[1.11]{KPU1}. However, it is hard to find an explicit generating set similar to \cite[3.6]{KPU1}. Using the construction in Theorem \ref{intro2ndform}, we can also show that $n$ is the smallest possible integer for a description as in Theorem \ref{introcolonform}.

A search for an explicit generating set of $\mathcal{A}=\mathcal{L}:(x_1,\dots, x_d)^n$, led to the concept of \textit{iterated Jacobian duals}. This method extends the notion of Jacobian duals, and helps in constructing generators for $\mathcal{A}$. Again, we study this procedure in a more general setting.  For any $m\times s$ matrix $\phi$ in a Noetherian ring with $I_1(\phi)\subseteq (a_1,\cdots,a_r)$, we set $B_1(\phi)=B(\phi)$ (\mbox{see }\eqref{IntroJacobianDual}) and we iteratively construct $B_i(\phi)$ from $B_{i-1}(\phi)$ (we refer to Section 4 for details on the construction). Let $\mathcal{L}$ denote the ideal defining $\sym(\coker \phi)$. By construction, $\mathcal{L}+I_r(B_i(\phi))\subset \mathcal{L}+I_r(B_{i+1}(\phi))$. Though $B_i(\phi)$ may not be unique, we exhibit the unique nature of $\mathcal{L}+I_r(B_i(\phi))$ when $a_1,\dots,a_r$ is an $R$-regular sequence.\vskip 2mm
\begin{thm}
	Let $R$ be a Noetherian ring and $\phi$ be a $m\times s$ matrix with entries in $R$. Suppose $I_1(\phi)\subseteq (a_1,\dots,a_r)$ and $a_1,\dots,a_r$ is a regular sequence.  Then the ideal $\mathcal{L}+I_r(B_i(\phi))$ of $R[T_1,\dots,T_m]$ is uniquely determined by the matrix $\phi$ and the regular sequence $a_1,\dots,a_r$.
\end{thm}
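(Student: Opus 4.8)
The plan is to induct on $i$, but with a strengthened hypothesis: I will show that any two matrices $B_i(\phi)$ and $B_i(\phi)'$ produced by the construction of Section~4 are linked by a finite chain of column operations each of which leaves the ideal $\mathcal{L}+I_r(-)$ of $R[T_1,\dots,T_m]$ unchanged. The operations I allow are: permuting columns; adding to a column an $S$-multiple of another column; adjoining or deleting a column lying in the $S$-span of the remaining columns; adjoining or deleting a column all of whose entries lie in $\mathcal{L}$; and adjoining, deleting, or adding to a column an $S$-multiple of, a syzygy of $\underline{a}=a_1,\dots,a_r$. The first three preserve $I_r(-)$ on the nose; a column with entries in $\mathcal{L}$ contributes only maximal minors in $\mathcal{L}\cdot I_{r-1}(\cdot)\subseteq\mathcal{L}$; and the syzygy operation is governed by the key lemma below. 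The single place regularity of $\underline{a}$ enters is that the Koszul complex on $\underline{a}$ is acyclic, so every syzygy of $\underline{a}$ over $S$ is an $S$-combination of the Koszul syzygies $a_ke_l-a_le_k$; thus in checking the syzygy operation one may always assume the syzygy column is a Koszul syzygy.

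\emph{The key lemma} I would isolate is: if $M$ is an $r\times s'$ matrix each of whose columns $c$ satisfies $[\,a_1\cdots a_r\,]\cdot c\in\mathcal{L}$, and $N$ has columns that are syzygies of $\underline{a}$, then $I_r(M+N)+\mathcal{L}=I_r(M)+\mathcal{L}$. By symmetry it suffices to prove $\subseteq$, and expanding a maximal minor of $M+N$ multilinearly in the perturbed columns reduces this to showing $\det[\,n_1\mid\cdots\mid n_k\mid c_1\mid\cdots\mid c_{r-k}\,]\in\mathcal{L}$ whenever $k\ge 1$, the $n_j$ are Koszul syzygies of $\underline{a}$, and $[\,a_1\cdots a_r\,]\cdot c_j\in\mathcal{L}$. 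Expanding such a determinant along a Koszul-syzygy column $n_1=a_pe_q-a_qe_p$ and recursing on $k$ (the base case $k=1$ being the determinantal identity $\det[\,a_pe_q-a_qe_p\mid w_1\mid\cdots\mid w_{r-1}\,]=\sum_i\pm(\text{complementary minor})\cdot\big([\,a_1\cdots a_r\,]\cdot w_i\big)$, and the extreme case of no admissible column being $0$ since some $a_t$ is a nonzerodivisor) expresses everything as an $S$-linear combination of the elements $[\,a_1\cdots a_r\,]\cdot c_j\in\mathcal{L}$. This is the one genuinely computational step, but it is essentially the classical invariance of the Jacobian dual ($i=1$, $n=1$) extended to matrices with extra columns. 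In particular, since two Jacobian duals $B(\phi)$, $B(\phi)'$ satisfy $[\,a_1\cdots a_r\,]\cdot(B(\phi)-B(\phi)')=0$, their difference has syzygy columns, and the lemma settles the base case $i=1$.

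For the inductive step I would use the form of the construction: $B_i(\phi)$ is obtained from $B_{i-1}(\phi)$ by adjoining, for each maximal minor $\delta$ of $B_{i-1}(\phi)$ and each $t$, a column $u_{\delta,t}$ with $a_t\delta=[\,a_1\cdots a_r\,]\cdot u_{\delta,t}$; such a column exists because the columns of $B_{i-1}(\phi)$ themselves satisfy $[\,a_1\cdots a_r\,]\cdot c\in\mathcal{L}$, whence $\mathcal{L}=([\,a_1\cdots a_r\,]\cdot B_{i-1}(\phi))$ and, by an adjugate computation, $a_t\delta\in\mathcal{L}$, and this invariant (every column $c$ of $B_i(\phi)$ has $[\,a_1\cdots a_r\,]\cdot c\in\mathcal{L}$) is preserved. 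Given two runs of the construction, the inductive hypothesis links $B_{i-1}(\phi)$ and $B_{i-1}(\phi)'$ by allowed operations; each such operation changes a maximal minor $\delta$ of $B_{i-1}(\phi)$ only by an element $\ell\in\mathcal{L}$ (by the key lemma), so a valid witness for $\delta+\ell$ may be taken to be $u_{\delta,t}+a_tv$ with $v$ in the span of the columns of $B_{i-1}(\phi)$, which relative to $B_i(\phi)'$ is a span column perturbed by a syzygy; meanwhile the only residual freedom in the choice of each $u_{\delta,t}$ is a syzygy of $\underline{a}$. Tracking these through, $B_i(\phi)$ and $B_i(\phi)'$ are again linked by allowed operations, so $\mathcal{L}+I_r(B_i(\phi))=\mathcal{L}+I_r(B_i(\phi)')$.

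The main obstacle I anticipate is exactly this last bookkeeping: because the adjoined columns $u_{\delta,t}$ depend on the matrix $B_{i-1}(\phi)$ and not merely on the ideal $\mathcal{L}+I_r(B_{i-1}(\phi))$ it determines, a bare "the ideal is well-defined" induction does not feed itself, and one is forced to carry along the finer equivalence "connected by $I_r(-)+\mathcal{L}$-preserving column operations" and to verify it is stable under the precise construction of Section~4. Everything else — the key lemma and the base case — should be routine once the determinantal identity above is written out.
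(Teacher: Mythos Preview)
Your key lemma is essentially the same determinantal content as the paper's Lemma~4.2 combined with Lemma~4.3, and the base case $i=1$ is handled the same way in both. But the inductive structure you propose is considerably more complicated than the paper's, and in one place it is actually wrong.

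Your worry that ``a bare `the ideal is well-defined' induction does not feed itself'' is unfounded, and this is precisely what the paper exploits. The point you are missing is that the \emph{ideal} $\mathcal{L}_i=(\underline{a}\cdot B_i(\phi))$ is itself well-defined, independent of all choices. Indeed, by construction $\mathcal{L}_i=\mathcal{L}_{i-1}+(I_r(B_{i-1}(\phi))\cap(\underline{a}))$, and since $\mathcal{L}_{i-1}\subseteq(\underline{a})$ the modular law gives $\mathcal{L}_i=(\mathcal{L}_{i-1}+I_r(B_{i-1}(\phi)))\cap(\underline{a})=(\mathcal{L}+I_r(B_{i-1}(\phi)))\cap(\underline{a})$, which is well-defined by the inductive hypothesis. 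So any two candidates $B,B'$ for $B_i(\phi)$ satisfy $(\underline{a}\cdot B)=(\underline{a}\cdot B')=\mathcal{L}_i$. Now the paper invokes a Fitting-ideal trick: both $[\delta\mid B]$ and $[\delta\mid B']$ (with $\delta$ the first Koszul differential on $\underline{a}$) are presentation matrices of the module $(\underline{a})/\mathcal{L}_i$, so $I_r([\delta\mid B])=\mathrm{Fitt}_0((\underline{a})/\mathcal{L}_i)=I_r([\delta\mid B'])$; then the Cramer's-rule computation (your key lemma, essentially) shows $\mathcal{L}_i+I_r([\delta\mid B])=\mathcal{L}_i+I_r(B)$. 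No matrix-level bookkeeping, no chain of column operations, no strengthened hypothesis.

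There is also a genuine error in your proposal. You claim as an invariant that every column $c$ of $B_i(\phi)$ satisfies $[\underline{a}]\cdot c\in\mathcal{L}$, and you write $\mathcal{L}=(\underline{a}\cdot B_{i-1}(\phi))$. Both statements are false for $i\geq 2$: the new columns adjoined at step $i$ satisfy $[\underline{a}]\cdot c=u_k\in I_r(B_{i-1}(\phi))\cap(\underline{a})$, which lies in $\mathcal{L}_i$ but typically not in $\mathcal{L}$, and $(\underline{a}\cdot B_{i-1}(\phi))=\mathcal{L}_{i-1}\supsetneq\mathcal{L}$ in general. Your key lemma, as stated with the hypothesis $[\underline{a}]\cdot c\in\mathcal{L}$, therefore does not apply to $B_{i-1}(\phi)$ when $i>2$. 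The fix is to replace $\mathcal{L}$ by $\mathcal{L}_i$ throughout --- but once you do that and notice $\mathcal{L}_i$ is well-defined, the entire column-operation apparatus becomes unnecessary. (You have also slightly misread the construction: the adjoined columns come from generators of $I_r(B_{i-1}(\phi))\cap(\underline{a})$, not merely from products $a_t\delta$; the former can be strictly larger.)
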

\vskip 1mm
 The procedure of iterated Jacobian duals, was independently studied, in the case of $k[x_1,x_2]$ when $m=3$ by Hong, Simis and Vasoncelos in \cite{HoSV} and Cox, Hoffman and Wang in \cite{CHW}. The construction presented in these papers are slightly different from ours.

In the context of almost linearly presented grade 2 perfect ideals in \\$R=k[x_1,\dots,x_d]$ satisfying the $G_d$ condition, we can show that $I_1(\varphi)=(\underline{x})$. Further we show that $(\mathcal{L},I_d(B_i(\varphi)))\subset \mathcal{L}:(\underline{x})^i$, and hence $(\mathcal{L}+I_d(B_i(\varphi)))\subset\mathcal{A}$.  Referring to the generating set of the defining equations of $\mathcal{R}(I)$ presented in \cite[3.6]{KPU1}, for $d=2$ we observe that the defining ideal $\mathcal{A}$ is not always equal to an ideal of an iterated Jacobian dual. But we present a condition, namely the equality $\overline{K}^n=\overline{K}^{(n)}$ in the ring $A$ (as defined in Theorem \ref{intro2ndform}), for when $\mathcal{A}$ is equal to the ideal of the iterated Jacobian dual $(\mathcal{L}+I_d(B_n(\varphi)))$. This condition is always satisfied for ideals  with $\mu(I)=d+1$, i.e, for ideals of second analytic deviation one. \vskip 2mm
\begin{thm}
Let $R=k[x_1,\dots, x_d]$ and $I$ be a grade 2 perfect ideal. Suppose that the presentation matrix $\varphi$ of $I$ is almost linear, that is, all but the last column of $\varphi$ are linear and the last column consist of homogeneous entries of degree $n$. If $I$ satisfies $G_d$ and $\mu(I)=d+1$, then the defining ideal of $\mathcal{R}(I)$ satisfies
	$\mathcal{A}=\mathcal{L}+I_d(B_n(\varphi))=\mathcal{L}:(x_1,\dots, x_d)^n$.
\end{thm}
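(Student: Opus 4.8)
The plan is to combine the two descriptions of $\mathcal{A}$ obtained earlier with the explicit shape of $K$ in the case $\mu(I)=d+1$. By Theorem~\ref{introcolonform} we already have $\mathcal{A}=\mathcal{L}:(\underline{x})^{n}$, which is the second of the two asserted equalities, and the general inclusion $\mathcal{L}+I_{d}(B_{n}(\varphi))\subseteq\mathcal{L}:(\underline{x})^{n}=\mathcal{A}$ holds for these ideals; so the whole content is the reverse inclusion. Recall that $\mathcal{A}=\mathcal{L}+I_{d}(B_{n}(\varphi))$ holds once $\overline{K}^{\,n}=\overline{K}^{\,(n)}$ in $A$: this is the criterion isolated in the previous section, coming from the isomorphism $\overline{\mathcal{A}}\cong_{A}\overline{K}^{\,(n)}(-1)$ of Theorem~\ref{intro2ndform} together with the matching description of $\overline{\mathcal{L}+I_{d}(B_{n}(\varphi))}$ as $\overline{K}^{\,n}(-1)$ inside $A$. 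So it suffices to prove $\overline{K}^{\,n}=\overline{K}^{\,(n)}$; I will in fact establish $\overline{K}^{\,j}=\overline{K}^{\,(j)}$ for all $j\geq1$, i.e.\ that every power of $\overline{K}$ is unmixed.

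First I would make $A$ and $K$ explicit. With $\mu(I)=m=d+1$ the matrix $\varphi'$ is $(d+1)\times(d-1)$ and $B(\varphi')$ is a $d\times(d-1)$ matrix of linear forms in $T_{1},\dots,T_{d+1}$. Applying Theorem~\ref{introgoup} to $\psi=B(\varphi')$ (so $r=d$, $s=d-1$) shows that $I_{d}(B(\varphi'))$ is prime of height $\max\{0,s-r+1\}=0$, hence $I_{d}(B(\varphi'))=0$; thus $A=R[T_{1},\dots,T_{d+1}]/\big([x_{1}\cdots x_{d}]\cdot B(\varphi')\big)$ is a Cohen--Macaulay domain, in fact a complete intersection, by the consequence of Theorem~\ref{introgoup} applied with $\phi=\varphi$. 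Write $B(\varphi')=\left[\begin{smallmatrix}B\\ b\end{smallmatrix}\right]$ with $b$ its last row. In $A$ the defining relation $[x_{1}\cdots x_{d}]\cdot B(\varphi')=0$ reads $[x_{1}\cdots x_{d-1}]\cdot B=-x_{d}b$, so $([x_{1}\cdots x_{d-1}]\cdot B)A\subseteq(x_{d})A$; since $B$ is square we also have $I_{d-1}(B)=(\det B)$, and therefore
\[
\overline{K}=(x_{d})A+\big([x_{1}\cdots x_{d-1}]\cdot B\big)A+I_{d-1}(B)A=\big(\,\overline{x_{d}},\ \overline{\det B}\,\big),
\]
a two-generated, height-one, unmixed ideal, with $A/\overline{K}\cong k[x_{1},\dots,x_{d-1},T_{1},\dots,T_{d+1}]/\big([x_{1}\cdots x_{d-1}]\cdot B,\ \det B\big)$.

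It remains to show that every power of $\overline{K}=(\overline{x_{d}},\overline{\det B})$ is unmixed. The route I would take is to pass to the Rees algebra $A[\overline{K}t]$: it suffices to see that $\mathrm{gr}_{\overline{K}}(A)$ is Cohen--Macaulay, and I would do this by showing that $\overline{x_{d}},\overline{\det B}$ form a $d$-sequence in $A$ and that $\overline{K}$ has sliding depth, both reducing — via the $G_{d}$ hypothesis on $I$ — to depth estimates on the Buchsbaum--Rim (Eagon--Northcott) complex resolving $\coker(B)$ together with the one syzygy relating $\overline{x_{d}}$ and $\overline{\det B}$. Since $A$ is Cohen--Macaulay with $\mathrm{ht}\,\overline{K}=1$, Cohen--Macaulayness of $\mathrm{gr}_{\overline{K}}(A)$ forces $\mathrm{Ass}(A/\overline{K}^{\,j})\subseteq\mathrm{Min}(\overline{K})$ for every $j$, i.e.\ $\overline{K}^{\,j}=\overline{K}^{\,(j)}$. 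Taking $j=n$ and running this back through the identifications of the first paragraph gives $\overline{\mathcal{A}}=\overline{K}^{\,n}(-1)=\overline{\mathcal{L}+I_{d}(B_{n}(\varphi))}$, hence $\mathcal{A}=\mathcal{L}+I_{d}(B_{n}(\varphi))$, which together with Theorem~\ref{introcolonform} completes the proof.

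The step I expect to be hardest is precisely this last one: controlling $\mathrm{Ass}(A/\overline{K}^{\,j})$ for a height-one ideal that is neither principal nor perfect and whose quotient $A/\overline{K}$ is not Cohen--Macaulay. The feature of the case $\mu(I)=d+1$ that makes it tractable is the collapse of $K$ to the two generators $\overline{x_{d}},\overline{\det B}$, together with the fact that $\det B$ is a single form of degree $d-1$ attached to a \emph{square} matrix $B$; concretely one must rule out that $\overline{\det B}$ lies in an embedded prime of $(\overline{x_{d}})A$, and this is where the $G_{d}$ condition on $I$ — which already guaranteed that $I_{d}(B(\varphi'))$ attains its maximal height and hence that $A$ is a Cohen--Macaulay domain — is used one more time.
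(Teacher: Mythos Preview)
Your reduction is exactly right: by Theorem~\ref{1stform} you already have $\mathcal{A}=\mathcal{L}:(\underline{x})^{n}$, and by Corollary~\ref{condition of equality of ordinary and symbolic powers of K} the remaining equality follows once $\overline{K}^{\,n}=\overline{K}^{\,(n)}$ in $A$. Your explicit description $\overline{K}=(\overline{x_d},\overline{\det B})$ with $A$ a complete intersection is also correct.

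However, one factual slip undermines the commentary in your last paragraph: $A/\overline{K}$ \emph{is} Cohen--Macaulay. This is Observation~\ref{KisCM}, and it matters because once you know it, $\overline{K}$ is an almost complete intersection of height~$1$ with Cohen--Macaulay quotient in a Cohen--Macaulay ring, so by \cite[2.2]{HU1} it is strongly Cohen--Macaulay (equivalently, has sliding depth). Half of what you flagged as the hard step is therefore already done; you need not invoke Eagon--Northcott or Buchsbaum--Rim complexes for this.

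The genuine gap is the other half. Your proposal to show $\overline{x_d},\overline{\det B}$ is a $d$-sequence, ``reducing via the $G_d$ hypothesis on $I$,'' is too vague, and the real obstruction is local: you must control $\mu(\overline{K}_P)$ at height-$2$ primes $P$ of $A$ containing $\overline{K}$. For $P\not\supseteq\overline{(\underline{x})}$ this is easy (then $\overline{K}_P=(\overline{x_d})_P$), but for $P\supseteq\overline{(\underline{x})}$ it is not, and you do not address it. The paper handles this by proving the key height estimate $\hght I_{d-1}(B(\varphi'))=2$ in $S$ (Observation~\ref{htofsubmaxminors}, using that $A\cong\sym_{k[\underline{T}]}(\coker B(\varphi'))$ is a domain); then any height-$2$ prime $P\supseteq\overline{(\underline{x})}$ avoids some $(d-1)\times(d-1)$ minor $w_i'$ of $B(\varphi')$, and Cramer's rule gives $\overline{x_i}\,\overline{\det B}=\pm\,\overline{x_d}\,\overline{w_i'}$, forcing $\overline{K}_P=(\overline{\det B})_P$. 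This yields $\mu(\overline{K}_P)\leq\hght P-1$ for all height-$2$ primes, and together with strong Cohen--Macaulayness and generic complete intersection one invokes \cite[3.4]{SV1} to conclude $\overline{K}^{\,n}=\overline{K}^{\,(n)}$. Your route via Cohen--Macaulayness of $\mathrm{gr}_{\overline{K}}(A)$ would ultimately require the same local input; the paper's argument isolates it explicitly.
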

\vskip 1mm
We also examine certain algebraic properties such as the Cohen Macaulayness of the Rees algebra and invariants such as the relation type $\rt(I)$, of the ideal and Castelnuovo-Mumford regularity of the Rees algebra. 
\begin{thm}
Let $R=k[x_1,\dots, x_d]$ and $I$ be a grade 2 perfect ideal. Suppose that the presentation matrix $\varphi$ of $I$ is almost linear, that is, all but the last column of $\varphi$ are linear and the last column consist of homogeneous entries of degree $n$. If $I$ satisfies $G_d$ and $\mu(I)=d+1$, then 	$\rt (I)=\reg~\mathcal{F}(I)+1=\reg~\mathcal{R}(I)+1=n(d-1)+1$.
\end{thm}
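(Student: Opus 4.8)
The plan is to trap all three invariants between a common lower and upper bound, using the two explicit descriptions of $\mathcal A$ obtained earlier. Grade $S=R[T_1,\dots,T_m]$ by $\deg_T T_j=1$, $\deg_T x_i=0$, so that $\mathcal L$, the ideals $I_d(B_i(\varphi))$, and hence $\mathcal A$ are $T$‑homogeneous; here $m=d+1$ and a minimal generator of $I$ has degree $\delta=n+d-1$. By the preceding theorem $\mathcal A=\mathcal L+I_d(B_n(\varphi))=\mathcal L:(\underline x)^n$, and by Theorem \ref{intro2ndform} together with the equality $\overline K^{(n)}=\overline K^{n}$ that holds when $\mu(I)=d+1$, in $A=R[T]/([x_1\cdots x_d]\cdot B(\varphi'))$ one has $\overline{\mathcal A}\cong_A\overline K^{n}(-1)$. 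When $\mu(I)=d+1$ the matrix $B(\varphi')$ is $d\times(d-1)$, so $I_d(B(\varphi'))=0$ and the $d-1$ bilinear entries of $[x_1\cdots x_d]\,B(\varphi')$ form a regular sequence; thus $A$ is a Gorenstein domain of dimension $d+2$, $\mathcal R(I)=A/\overline{\mathcal A}$ is Cohen--Macaulay of dimension $d+1$ (established earlier in the paper), and from $0\to\overline K^{n}(-1)\to A\to\mathcal R(I)\to 0$ and the usual depth count $\overline{\mathcal A}$, hence $\overline K^{n}$, is a maximal Cohen--Macaulay $A$‑module.

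First I would treat the fiber cone. Reducing modulo $(\underline x)$ kills the defining relations of $A$, so $A/(\underline x)A=k[T_1,\dots,T_{d+1}]$ and $\mathcal F(I)=\mathcal R(I)/(\underline x)\mathcal R(I)=k[T]/\mathfrak f$ with $\mathfrak f$ the image of $\mathcal A$. Since $I_1(\varphi)=(\underline x)$ we have $\mathcal L\subseteq(\underline x)S$, so $\mathfrak f$ is the image of $I_d(B_n(\varphi))$, equivalently, through $\overline{\mathcal A}\cong\overline K^{n}(-1)$, the image of $\overline K^{n}$ modulo $(\underline x)A$ carried by the degree twist. Now $\overline K=(x_d)+([x_1\cdots x_{d-1}]B)+I_{d-1}(B)$ with $B$ having entries linear in the $T_j$, and among the monomial generators of $\overline K^{n}$ only $(\det B)^{n}$ survives modulo $(\underline x)$, where $\det B\in k[T]$ is a form of degree $d-1$. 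Hence $\mathcal F(I)=k[T]/(f)$ is a hypersurface with $\deg f=n(d-1)+1$ (the twist supplying the $+1$), so $\reg\mathcal F(I)=n(d-1)$. (Alternatively $\mathcal F(I)$ is the homogeneous coordinate ring of the image of the rational map $\mathbb P^{d-1}\dashrightarrow\mathbb P^{d}$ given by the $d+1$ forms of degree $\delta$ generating $I$; the $G_d$ hypothesis makes this map birational onto its image, whose degree is read off from the Hilbert--Burch resolution of $R/I$ to be $n(d-1)+1$.)

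Next, for the Rees algebra, applying $T$‑graded regularity to $0\to\overline K^{n}(-1)\to A\to\mathcal R(I)\to 0$ gives $\reg_T\mathcal R(I)\le\max\{\reg_T A,\ \reg_T\overline K^{n}\}$. As $A$ is a complete intersection of forms of $T$‑degree $1$, $\reg_T A=0$; and $\overline K^{n}$, generated by products of $n$ among $x_d$ ($T$‑degree $0$), $[x_1\cdots x_{d-1}]B$ ($T$‑degree $1$) and $\det B$ ($T$‑degree $d-1$), is generated in $T$‑degrees $\le n(d-1)$, so $\reg_T\overline K^{n}\ge n(d-1)$ and — using that $\overline K^{n}$ is maximal Cohen--Macaulay over the Gorenstein ring $A$ and that $\mu(I)=d+1$ forces $\overline K^{(n)}=\overline K^{n}$, which pins down $\operatorname{Hom}_A(\overline K^{n},A)$ explicitly — one gets $\reg_T\overline K^{n}=n(d-1)$, hence $\reg_T\mathcal R(I)\le n(d-1)$. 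To assemble: the irreducible $f\in\mathcal A$ found above has $T$‑degree $n(d-1)+1$, and in the bigrading $\deg T_j=(\delta,1)$, $\deg x_i=(1,0)$ every bihomogeneous element of $\mathcal A$ has $x$‑weight at least $\delta$ times its $T$‑degree, with equality only on $k[T]\cap\mathcal A$; hence $f\notin\mathfrak M\mathcal A$, so $f$ is a minimal generator and $\rt(I)\ge n(d-1)+1=\reg\mathcal F(I)+1$. Since minimal generators of $\mathcal A$ are detected by $\operatorname{Tor}_1^{S}(\mathcal R(I),k)$ and therefore occur in $T$‑degrees $\le\reg_T\mathcal R(I)+1$, also $\rt(I)\le\reg_T\mathcal R(I)+1\le n(d-1)+1$. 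Combining, $n(d-1)+1=\reg\mathcal F(I)+1\le\rt(I)\le\reg\mathcal R(I)+1\le n(d-1)+1$, forcing all equalities.

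The main obstacle is the exact computation $\reg_T\overline K^{n}=n(d-1)$: producing generators in $T$‑degree $n(d-1)$ is immediate, but bounding the higher syzygies of the maximal Cohen--Macaulay module $\overline K^{n}$ from above is the real work, and it is precisely there that the hypothesis $\mu(I)=d+1$ — through the coincidence $\overline K^{(n)}=\overline K^{n}$ — is used decisively, alongside careful tracking of the degree twist in Theorem \ref{intro2ndform}; the remaining ingredients are the $T$‑grading bookkeeping and the standard regularity–relation type inequalities.
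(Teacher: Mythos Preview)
Your proposal contains a genuine error that undermines the argument. You claim that $\mathcal R(I)$ is Cohen--Macaulay ``established earlier in the paper,'' and deduce from the depth count along $0\to\overline K^{\,n}(-1)\to A\to\mathcal R(I)\to 0$ that $\overline K^{\,n}$ is a maximal Cohen--Macaulay $A$-module. Both assertions are false. In the setting $\mu(I)=d+1$ with $n\ge 2$ the paper shows (Remark~\ref{remd1}) that $\mathcal R(I)$ is \emph{not} Cohen--Macaulay: the defining ideal is not of expected form, so \cite[4.5]{SUV1} applies. The subsequent depth theorem gives $\dep\mathcal R(I)=d$ exactly, whence $\dep\overline{\mathcal A}=\dep\overline K^{\,n}=d+1<d+2=\dim A$. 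Since $\overline K^{\,n}$ is not MCM, your proposed route to the upper bound $\reg_T\overline K^{\,n}\le n(d-1)$ via Gorenstein duality and $\operatorname{Hom}_A(\overline K^{\,n},A)$ does not go through, and this is precisely the step you flag as ``the real work.'' The remainder of your chain of inequalities is fine, but it collapses without this bound.

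The paper obtains the bound by an entirely different mechanism. From $\mathfrak m A=(\overline{x_d}):\overline K$ one has a two-generator presentation $0\to\mathfrak m A(0,-(d-1))\to A(-1,0)\oplus A(0,-(d-1))\to\overline K\to 0$. The crucial point is that $\overline K$ is strongly Cohen--Macaulay and generically a complete intersection (Observation~\ref{KisSCM}), hence of linear type; therefore applying $\sym_n$ to this presentation and using $\sym_n(\overline K)\cong\overline K^{\,n}$ yields the explicit short exact sequence
\[
0\longrightarrow\bigoplus_{i=0}^{n-1}\mathfrak m A(-i,-(n-i)(d-1))\longrightarrow\bigoplus_{i=0}^{n}A(-i,-(n-i)(d-1))\longrightarrow\overline K^{\,n}\longrightarrow 0.
\]
Since $A$ is a complete intersection cut out by bilinear forms, $\reg_T A=0$ and $\reg_T\mathfrak m A\le 1$; reading off the shifts gives $\reg_T\overline K^{\,n}\le n(d-1)$ directly. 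The relation type and $\reg\mathcal F(I)$ are then computed as you indicate (the hypersurface description of $\mathcal F(I)$ already appears in Theorem~\ref{thmandev1}), and the sandwich $\rt(I)-1\le\reg_T\mathcal R(I)\le n(d-1)$ closes the argument. The missing idea in your proposal is this linear-type/symmetric-power resolution of $\overline K^{\,n}$, which replaces the unavailable MCM hypothesis.
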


This paper is organized as follows. In Section 2, we study prime ideals of the form $(\underline{a}\cdot \psi):(\underline{a})^\infty$ for a regular sequence $\underline{a}$ and a certain matrix $\psi$. In Section 3 we explain the setting of the problem. We also present two forms of the defining equations of the Rees algebra of grade 2 perfect ideals whose presentation matrix is almost linear. In Section 4, we explain the procedure of iterated Jacobian duals, and we give a condition for the equality of the defining ideal of $\mathcal{R}(I)$ and the ideal of an iterated Jacobian dual. In Section 5, we show that the defining ideal of the Rees algebra of grade 2 perfect ideals with second analytic deviation one, coincide with ideal of an iterated Jacobian dual. Algebraic properties such as Cohen-Macaulayness and the Castelnuovo-Mumford regularity of the Rees algebra of such ideals are also studied.
\subsection*{Acknowledgements}

We thank the makers of the computer algebra software \textit{Macaulay2}, which helped us verify our guesses and hopes.

We are immensely grateful to our advisor Prof. Bernd Ulrich for his insightful comments and observations, without which this work would not have come to fruition.  

\section{Prime Saturations}
The $G_d$ condition on a strongly Cohen Macaulay ideal $I \subset k[x_1, \ldots, x_d]$ implies that $\mathcal{A} = \mathcal{L} : (\underline{x})^\infty$ is a prime ideal of expected height. Under such conditions we would like to know generators of the colon ideal $\mathcal{L} : (\underline{x})$. If this colon ideal is a residual intersection, the work of Huneke and Ulrich shows that $\mathcal{L} : (\underline{x}) = \mathcal{L} + I_d(B(\varphi))$, where $B(\varphi)$ is a Jacobian Dual of $\varphi$ \cite{HU1}.

In the case where $I$ is a perfect ideal of grade two which is almost linearly presented, we show that $\mathcal{L} : (\underline{x})$ is a residual intersection by showing that $I_d(B(\varphi '))$ attains the maximum height. Recall that $\varphi '$ is the matrix consisting of the linear columns of $\varphi$. Notice that $\mathcal{L} : (\underline{x})^\infty = (\underline{x}\cdot B(\varphi)): (\underline{x})^\infty$. We first show that this saturation being a prime ideal of expected height implies that $(\underline{x}\cdot B(\varphi ' )): (\underline{x})^\infty$ is also a prime ideal of expected height. We then show that this along with the fact that the entries of $B(\varphi ')$ are in $k[T_1, \ldots, T_m]$ implies that $(\underline{x}\cdot B(\varphi ')):(\underline{x})^\infty = (\underline{x}\cdot B(\varphi ')):(\underline{x}) = (\underline{x}\cdot B(\varphi ')) + I_d(B(\varphi '))$ and that $I_d(B(\varphi '))$ attains the maximum height. We give these results about prime saturations in a more general setting than will be needed for later applications.

Our first result makes use of the following exercise from Kaplansky. \cite[Exercise 5a p.7]{KAP}

\begin{lem}\label{KapLemma} Let $T$ be a commutative ring and $Q=(q)$ a principal prime ideal. If $P$ is a prime ideal properly contained in $Q$, then $P \subset \bigcap_{i=1}^\infty Q^i$ \end{lem}

\begin{proof}
Let $p\in P$. We prove by induction on $i$ that $p \in Q^i$ for all $i$. The case $i=1$ is clear as $p \in P\subset Q = Q^1$. Now assume that $p \in Q^{i-1}=(q^{i-1})$ for $i>1$. Then $p=t q^{i-1}$ for some $t \in T$. $q^{i-1} \notin P$ since $P \neq Q$. Thus $t \in P$ since $t q^{i-1} \in P$ and $P$ is a prime ideal. So $t \in P \subset Q = (q)$ and therefore $p = t q^{i-1} \in (q^i) = Q^i$.
\end{proof}

\begin{prop}\label{godown} Let $S$ be a positively graded Noetherian ring, $\mathfrak{a}$ an $S_0$-ideal and \\$(L_1, \ldots, L_s) \subset \mathfrak{a} S \cap S_+$ an $S$-ideal with $(L_1, \ldots, L_{s-1})$ a homogeneous ideal. If $(L_1, \ldots, L_s) :\mathfrak{a}^\infty$ is a prime ideal of height $s$, then $(L_1, \ldots, L_{s-1}):\mathfrak{a}^\infty$ is a prime ideal of height $s-1$. \end{prop}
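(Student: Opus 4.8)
Write $J=(L_1,\dots,L_{s-1})$, $P=(L_1,\dots,L_s):_S\mathfrak a^\infty$ and $Q=J:_S\mathfrak a^\infty$; note $Q$ is homogeneous since $J$ is and $\mathfrak a\subseteq S_0$. I would begin with the standard saturation bookkeeping: for an ideal $\mathfrak b$, the minimal (resp.\ associated) primes of $\mathfrak b:\mathfrak a^\infty$ are exactly the minimal (resp.\ associated) primes of $\mathfrak b$ not containing $\mathfrak a$, and $\mathfrak b:\mathfrak a^\infty$ is the intersection of the primary components of $\mathfrak b$ whose radical avoids $\mathfrak a$. Applied to $(L_1,\dots,L_s)$ this forces $\mathfrak a\not\subseteq P$ (a prime of this shape cannot contain $\mathfrak a$), and from $J\subseteq Q\subseteq (J+(L_s)):\mathfrak a^\infty$ one gets $P=(Q+(L_s)):\mathfrak a^\infty$ after saturating. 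Since $Q\subseteq P$ and every minimal prime of $Q$ has height $\le s-1$ by Krull's theorem, it is enough to prove that $Q$ is prime of height exactly $s-1$.

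The second step is to localize at $P$. Set $A=S_P$, a local ring of dimension $\mathrm{ht}\,P=s$ with maximal ideal $\mathfrak m=PA$. As $\mathfrak a\not\subseteq P$ we have $\mathfrak aA=A$, so saturations disappear after localizing: $\mathfrak m=PA=(L_1,\dots,L_s)A$ and $QA=JA$. Thus $L_1,\dots,L_s$ generate an $\mathfrak m$-primary ideal and number $s=\dim A$, i.e.\ form a system of parameters of $A$; hence $\dim A/JA=1$ and the maximal ideal of $A/JA$ is generated by the single image $\overline{L_s}$. So $A/JA$ is a one-dimensional Noetherian local ring whose maximal ideal is the principal prime $(\overline{L_s})$, and this is where Lemma~\ref{KapLemma} enters: $\bigcap_i(\overline{L_s})^i=0$ by the Krull intersection theorem, and the Lemma then shows that every prime strictly contained in $(\overline{L_s})$ lies in $\bigcap_i(\overline{L_s})^i=0$; so $A/JA$ has $(0)$ as its unique minimal prime, is reduced, and is therefore a domain (a DVR, in fact). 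Hence $J_P=JA$ is prime. If, in addition, every associated prime of $J$ avoiding $\mathfrak a$ is contained in $P$, then by the bookkeeping $Q$ is the intersection of the primary components of $J$ with radical inside $P$, i.e.\ $Q=J_P\cap S$, which is prime.

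So everything comes down to: \emph{if $\mathfrak p\in\mathrm{Ass}(S/J)$ and $\mathfrak a\not\subseteq\mathfrak p$, then $\mathfrak p\subseteq P$}. Suppose not; $\mathfrak p$ is homogeneous, and $L_s\notin\mathfrak p$, since otherwise $\mathfrak p\supseteq(L_1,\dots,L_s)$ and, avoiding $\mathfrak a$, $\mathfrak p\supseteq P$. So $\overline{L_s}$ is a nonzerodivisor of the positively graded domain $D=S/\mathfrak p$; and because $L_s\in S_+$ we have $\overline{L_s}\in D_+$, so comparing degree-zero parts in an equation $a^n=\overline{L_s}\,c$ shows that no nonzero element of $D_0$ — in particular no nonzero element of the image of $\mathfrak a$ — has a power in $(\overline{L_s})D$. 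Hence $\mathfrak a\not\subseteq\sqrt{\mathfrak p+(L_s)}$, so there is a minimal prime $\mathfrak q$ of $\mathfrak p+(L_s)$ with $\mathfrak a\not\subseteq\mathfrak q$; then $\mathfrak q\supseteq(L_1,\dots,L_s)$ forces $\mathfrak q\supseteq P$, while $\mathfrak q=P$ is impossible (it would give $\mathfrak p\subseteq P$), so $\mathfrak q\supsetneq P$ and $\mathrm{ht}\,\mathfrak q\ge s+1$. On the other hand $\mathfrak q$ is minimal over $\mathfrak p+(L_s)$ with $L_s$ a nonzerodivisor modulo the prime $\mathfrak p$, so $\mathfrak q$ covers $\mathfrak p$ and $\mathrm{ht}\,\mathfrak q=\mathrm{ht}\,\mathfrak p+1\le s$ (as $\mathfrak p$ is an associated prime of the $(s-1)$-generated ideal $J$), a contradiction. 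This proves the claim, so $Q=J_P\cap S=:\mathfrak p_1$ is prime; and since $\mathfrak p_1\subsetneq P$, $P$ is minimal over $\mathfrak p_1+(L_s)$, and $L_s\notin\mathfrak p_1$, we get $\mathrm{ht}(P/\mathfrak p_1)=1$, whence $\mathrm{ht}\,Q=\mathrm{ht}\,\mathfrak p_1=s-1$.

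I expect the third paragraph to be the main obstacle: the localization at $P$ only controls $J$ "at $P$", and upgrading this to global primality of $Q$ requires excluding associated primes of $J$ that avoid $\mathfrak a$ but do not lie in $P$. This is exactly where the grading is indispensable — the hypothesis $L_s\in S_+$ is what keeps $\mathfrak a$ out of the radical of $\mathfrak p+(L_s)$ — and where the height comparisons occur, which rely on standard dimension theory (associated primes of an $n$-generated ideal have height $\le n$, additivity of height along prime chains) that is harmless in the applications, where $S$ is Cohen–Macaulay. The remaining ingredients — the saturation bookkeeping, the system-of-parameters computation, and the reduction of $A/JA$ to a DVR via Lemma~\ref{KapLemma} — are routine once arranged in this order.
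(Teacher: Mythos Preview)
Your route diverges from the paper's and the divergence is where the gap lies. The paper does \emph{not} localize at the prime $P=(L_1,\dots,L_s):\mathfrak a^\infty$; it picks a minimal prime $P$ of $J=(L_1,\dots,L_{s-1})$ contained in $(L_1,\dots,L_s):\mathfrak a^\infty$, inverts an arbitrary $a\in\mathfrak a\setminus\{0\}$, and works in the positively graded ring $T=S_a/J_a$. There Kaplansky's lemma gives $P_aT\subseteq\bigcap_i(L_sT)^i$, and the \emph{grading} (not Krull intersection) forces $\bigcap_i(L_sT)^i\subseteq\bigcap_i T_{\ge i}=0$. Thus $P_a=J_a$ for every $a\in\mathfrak a$, whence $P=J:\mathfrak a^\infty$ directly. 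No globalization step, no embedded primes, no catenarity.

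Your third paragraph, by contrast, needs to rule out associated primes $\mathfrak p$ of $J$ with $\mathfrak a\not\subseteq\mathfrak p$ and $\mathfrak p\not\subseteq P$, and the argument rests on two assertions that fail for a general positively graded Noetherian ring: (i) that an \emph{associated} prime of an $(s-1)$-generated ideal has height $\le s-1$ --- Krull's height theorem bounds only \emph{minimal} primes, and embedded primes can be taller; (ii) that if $\mathfrak q$ covers $\mathfrak p$ then $\hght\mathfrak q=\hght\mathfrak p+1$ --- this is a catenarity statement. You flag both as ``harmless in the applications,'' and indeed once one observes that $(S_a)_{P_a}$ is regular (its maximal ideal is generated by the $s=\dim$ elements $L_1,\dots,L_s$) everything can be repaired for the cases the paper cares about; but as a proof of the proposition in its stated generality your argument is incomplete. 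The moral is that the positive grading is doing more work than you use it for: it is not merely keeping $\mathfrak a$ out of radicals, it is replacing both the Krull intersection theorem and the dimension-theoretic bookkeeping that your localization-at-$P$ approach is forced to confront.
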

\begin{proof}

The ideal $(L_1, \ldots, L_s):\mathfrak{a}^\infty$ contains some minimal prime $P$ of $(L_1, \ldots, L_{s-1})$. We will show that this prime is precisely $(L_1, \ldots, L_{s-1}) : \mathfrak{a}^\infty$. Let $a \in \mathfrak{a}\setminus \{0\}$ and consider $S_a$. Note that $a \notin (L_1, \ldots, L_s) : \mathfrak{a}^\infty$ since otherwise $a\mathfrak{a}^v \subset (L_1, \ldots, L_s)$ for some $v \in \mathbb{N}$ which is impossible by degree considerations. Now $(L_1, \ldots, L_{s-1})_a \subset P_a \subset ((L_1, \ldots, L_s) : \mathfrak{a}^\infty)_a = (L_1, \ldots, L_s)_a $ and the second inclusion is proper since by Krull's Altitude Theorem, $P$ has height at most $s-1$. Consider the ring $T = S_a/(L_1, \ldots, L_{s-1})_a$. Applying Lemma \ref{KapLemma} to the ring $T$ with $P=P_aT$ and $Q = ((L_1, \ldots, L_s) : \mathfrak{a}^\infty)_aT = (L_1, \ldots, L_s)_aT = L_sT$, we see that $P_aT \subset \bigcap_{i=1}^\infty L_s^iT$. Notice that the ring $T = S_a/(L_1, \ldots, L_{s-1})_a$ is positively graded and that $L_sT \subset T_+$. Therefore $P_aT\subset \cap_{i=1}^\infty L_s^iT = 0$.
 So $P_a = (L_1, \ldots, L_{s-1})_a$. As this is true for every $a \in \mathfrak{a}$ and $\mathfrak{a}$ is finitely generated this shows that $\mathfrak{a}^k P \subset (L_1, \ldots, L_{s-1})$ for some $k \in \mathbb{N}$. Thus $P = (L_1, \ldots, L_{s-1}) : \mathfrak{a}^\infty$.\end{proof}

This lemma is useful on its own, but we will now use it to prove the main result of this section by induction.

\begin{thm}\label{goup} Let $R$ be a Cohen Macaulay local ring containing a field $k$ and $\underline{a} = a_1, \ldots, a_r$ an $R$-regular sequence with $r>0$. Let $S = R[T_1, \ldots, T_m]$ with $T_1, \ldots, T_m$ indeterminates over $R$ and $\psi$ be an $r \times s$ matrix with entries in $k[T_1, \ldots, T_m]$ so that each column consists of homogeneous elements of the same positive degree. If $(\underline{a} \cdot \psi) :_S (\underline{a})^\infty$ is a prime ideal of height $s$, then $I_r(\psi)$ is a prime ideal of $k[T_1, \ldots, T_m]$ of height $\mathrm{max}\{0, s-r+1\}$ and $(\underline{a}\cdot \psi) :_S (\underline{a})$ is a geometric residual intersection. Furthermore, \[(\underline{a}\cdot \psi) :_S (\underline{a})^\infty = (\underline{a}\cdot \psi) :_S (\underline{a}) = (\underline{a}\cdot \psi) + I_r(\psi).\]\end{thm}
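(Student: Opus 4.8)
The plan is to induct on $s-r$, using Proposition \ref{godown} to strip columns off $\psi$ until we reach a base case, and then to transport information back up. First I would handle the base case $s \leq r$. Here $\max\{0,s-r+1\}$ is $1$ if $s=r$ and $0$ if $s<r$; in the latter case $I_r(\psi)=0$, which is trivially prime of height $0$, and one checks directly that $(\underline a\cdot\psi):_S(\underline a)^\infty = (\underline a\cdot\psi):_S(\underline a) = (\underline a\cdot\psi)$ with $I_r(\psi)=0$ contributing nothing, while the residual intersection and primeness claims are exactly the hypothesis. When $s=r$, write $\psi$ as a square matrix; then $\underline a\cdot\psi = \underline a\cdot\psi$ and $\det\psi = I_r(\psi)$. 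The relation $\underline a\cdot\psi\subset (\underline a\cdot\psi):(\underline a)^\infty$ together with Cramer's rule shows $(\det\psi)\,a_i \in (\underline a\cdot\psi)$ for each $i$, so $\det\psi \in (\underline a\cdot\psi):(\underline a)$; since the latter is prime of height $r=s$ and $\det\psi$ lies in $k[T_1,\dots,T_m]$, one argues $\det\psi$ is (up to the radical) a prime element there, forcing $I_r(\psi)=(\det\psi)$ prime of height $1$. The Cohen-Macaulayness of $R$ and regularity of $\underline a$ give that $(\underline a\cdot\psi):(\underline a)$ is a geometric $r$-residual intersection in the sense of Huneke-Ulrich.

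For the inductive step with $s>r$, I would apply Proposition \ref{godown} with $S$ the polynomial ring, $\mathfrak a = (\underline a)S_0\cap(\underline a)$... more precisely with $\mathfrak a$ the ideal generated by $\underline a$ viewed appropriately so that $S=R[\underline T]$ is positively graded in the $T$-variables, $(L_1,\dots,L_s) = (\underline a\cdot\psi)$, and the first $s-1$ of the $L_j$ being the entries of $\underline a\cdot\psi'$ where $\psi'$ is $\psi$ with its last column deleted. The hypothesis gives that $(\underline a\cdot\psi):(\underline a)^\infty$ is prime of height $s$, so Proposition \ref{godown} yields that $(\underline a\cdot\psi'):(\underline a)^\infty$ is prime of height $s-1$. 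By induction, $I_r(\psi')$ is prime of height $\max\{0,s-r\}$ and $(\underline a\cdot\psi'):(\underline a)^\infty = (\underline a\cdot\psi') + I_r(\psi')$ is a geometric residual intersection. Now I would add the last column back: one has $(\underline a\cdot\psi) = (\underline a\cdot\psi') + (\underline a\cdot v)$ where $v$ is the last column, and modulo $(\underline a\cdot\psi')+I_r(\psi')$ the element $\underline a\cdot v$ cuts out the right thing. The key computation is that the Plücker/Laplace expansion relating $r$-minors of $\psi$ to $r$-minors of $\psi'$ and the entries of $v$ shows $I_r(\psi) \subset (\underline a\cdot\psi):(\underline a)$, and conversely that any element of the saturation lies in $(\underline a\cdot\psi)+I_r(\psi)$ after working in the domain $S/((\underline a\cdot\psi'):(\underline a)^\infty)$ and using that there the image of the single remaining relation is a nonzerodivisor times a linear form in the $T$'s.

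The main obstacle I anticipate is the last part — showing the saturation does not grow beyond $(\underline a\cdot\psi) + I_r(\psi)$, i.e. that $(\underline a\cdot\psi):(\underline a) = (\underline a\cdot\psi):(\underline a)^\infty$ and equals $(\underline a\cdot\psi)+I_r(\psi)$. The inclusions $(\underline a\cdot\psi)+I_r(\psi) \subseteq (\underline a\cdot\psi):(\underline a) \subseteq (\underline a\cdot\psi):(\underline a)^\infty$ are comparatively routine (Cramer's rule for the first, trivial for the second). For the reverse, I would pass to the ring $\bar S = S/((\underline a\cdot\psi')+I_r(\psi'))$, which by induction is a domain where $(\underline a)$ has positive height (being a geometric residual intersection, $\bar S$ is Cohen-Macaulay and $(\underline a)\bar S$ is unmixed of height $r$ or so), localize at a minimal prime of $(\underline a\cdot\psi')$, and use a Kaplansky-style argument (Lemma \ref{KapLemma}) as in Proposition \ref{godown} to show the only prime of the saturation lying over $(\underline a\cdot\psi')+I_r(\psi')$ is the expected one, hence the height count $s = (s-1)+1$ closes and $I_r(\psi)$ has height $\max\{0,s-r+1\} = \max\{0,s-r\}+1$ in the relevant range. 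The bookkeeping between the graded structure (needed for Proposition \ref{godown} and for the vanishing $\bigcap L_s^i = 0$) and the residual-intersection machinery of Huneke-Ulrich is where the care is required; everything else is assembling pieces already in place.
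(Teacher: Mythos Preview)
Your overall architecture---induct, strip a column via Proposition~\ref{godown}, use Cramer's rule to relate minors of $\psi$ and $\psi'$---matches the paper's. But two points of the proposal are genuinely incomplete.

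First, your handling of the base case $s=r$ is not self-contained. You claim $\det\psi$ is ``(up to the radical) a prime element'' in $k[T_1,\dots,T_m]$ from the fact that the saturation is prime of height $r$; that does not follow, and in any event you need $\det\psi\neq 0$, which requires knowing something about $(\underline a\cdot\psi')$. The paper treats $s=r$ inside the induction on $s$ (base case $s=0$), so that by the time one reaches $s=r$ the ideal $(\underline a\cdot\psi')=(\underline a\cdot\psi')+I_r(\psi')$ is already known to be prime of height $r-1$; then $\underline a\cdot\psi_s\notin(\underline a\cdot\psi')$ makes $(\underline a\cdot\psi)$ a complete intersection of height $r$, and one reads off $\det\psi\neq 0$ because $(\underline a)$ lies in a minimal prime of $(\underline a\cdot\psi)$. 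Inducting on $s-r$ with a block base case $s\le r$ forfeits this leverage.

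Second, and more seriously, in the inductive step $s>r$ you never actually show $\mathrm{ht}\,I_r(\psi)=s-r+1$. You say the Laplace expansion and a Kaplansky-style localization ``close the height count $s=(s-1)+1$''; but the height you need to increase by one is that of $I_r(\psi)$ over $I_r(\psi')$, not that of the saturation, and nothing you wrote rules out $I_r(\psi)\subseteq I_r(\psi')$. The paper's key move here is a contradiction argument: assuming $I_r(\psi)\subseteq I_r(\psi')$, one takes the \emph{last} $r$ columns of $\psi'$ (whose determinant is nonzero by iterating Proposition~\ref{godown} down to the square case), extracts a nonzero $(r-1)\times(r-1)$ minor $\delta$, and uses the Cramer identity $a_i\Delta\equiv\delta(\underline a\cdot\psi_s)\pmod{(\underline a\cdot\psi')}$ together with the equality $(\underline a)\cap\bigl((\underline a\cdot\psi')+I_r(\psi')\bigr)=(\underline a\cdot\psi')$ (a consequence of the geometric residual intersection for $\psi'$) to contradict primeness of $(\underline a\cdot\psi')+I_r(\psi')$. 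Ordering the columns by \emph{decreasing} degree is what makes $\delta\notin I_r(\psi')$ for degree reasons. None of this is in your sketch.

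A structural point that would streamline your proposal considerably: the paper first proves an observation (Observation~\ref{geometric}) that \emph{all} conclusions of the theorem follow, via the residual-intersection results of Bruns--Kustin--Miller and Huneke--Ulrich, once $\mathrm{ht}\,I_r(\psi)=\max\{0,s-r+1\}$ is known. With that reduction in hand, the induction only needs to track the height of $I_r(\psi)$; your attempt to simultaneously control the colon ideals and the height is where the argument loses focus.
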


\

\noindent We first reduce the proof of the theorem to a height computation:

\begin{obs}\label{geometric} Theorem \ref{goup} follows once it has been shown that \\$\mathrm{ht}(I_r(\psi)) = \mathrm{max}\{0, s-r+1\}$. \end{obs}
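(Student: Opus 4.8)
The plan is to derive every assertion of Theorem~\ref{goup} from the single height equality $\mathrm{ht}(I_r(\psi))=\mathrm{max}\{0,s-r+1\}$ by placing the situation inside the structure theory of residual intersections of complete intersections. Write $\mathfrak{a}=(\underline{a})S$, $J=(\underline{a}\cdot\psi)$, $t=\mathrm{max}\{0,s-r+1\}$, and $\mathcal{P}=(\underline{a}\cdot\psi):_S(\underline{a})^\infty$, which is prime of height $s$ by hypothesis; I may assume $\underline{a}\subseteq\mathfrak{m}$, so $\mathfrak{a}$ is a proper ideal. The first, purely formal, step is that $I_r(\psi)S\subseteq J:_S\mathfrak{a}$: for any $r\times r$ submatrix $\psi_0$ of $\psi$, multiplying the row $\underline{a}\cdot\psi_0\in J$ on the right by the adjugate of $\psi_0$ gives $\det(\psi_0)\,\underline{a}\in J$, so $\det(\psi_0)\in J:_S\mathfrak{a}$ (when $s<r$ the ideal $I_r(\psi)$ is zero and this is vacuous). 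Hence $J+I_r(\psi)S\subseteq J:_S\mathfrak{a}\subseteq\mathcal{P}$.

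Next I would record the heights that the hypothesis supplies. Since $R$ is a free $k$-module, the inclusions $k[T_1,\dots,T_m]\hookrightarrow S$ and $k[T_1,\dots,T_m]\hookrightarrow S/\mathfrak{a}=(R/(\underline{a}))[T_1,\dots,T_m]$ are faithfully flat, so $\mathrm{ht}(I_r(\psi)S)=\mathrm{ht}\bigl(I_r(\psi)(S/\mathfrak{a})\bigr)=\mathrm{ht}(I_r(\psi))=t$. Since $\underline{a}$ is a regular sequence in the Cohen--Macaulay ring $S=R[T_1,\dots,T_m]$, the quotient $S/\mathfrak{a}$ is Cohen--Macaulay and $\mathfrak{a}$ has height $r$, so $\mathrm{ht}(\mathfrak{a}+I_r(\psi)S)=r+\mathrm{ht}\bigl(I_r(\psi)(S/\mathfrak{a})\bigr)=r+t$, and a short case check ($s\le r-1$ versus $s\ge r$) shows $r+t\ge s+1$. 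I would then verify that $J:_S\mathfrak{a}$ is a geometric $s$-residual intersection of the complete intersection $\mathfrak{a}$: if $\mathfrak{q}$ is minimal over $J+I_r(\psi)S$ then either $\mathfrak{a}\subseteq\mathfrak{q}$, whence $\mathrm{ht}\,\mathfrak{q}\ge r+t\ge s+1$, or $\mathfrak{a}\not\subseteq\mathfrak{q}$, in which case $(J:_S\mathfrak{a}^\infty)_\mathfrak{q}=J_\mathfrak{q}$ is a proper ideal of $S_\mathfrak{q}$ so that $\mathcal{P}\subseteq\mathfrak{q}$ and $\mathrm{ht}\,\mathfrak{q}\ge s$; together with $J+I_r(\psi)S\subseteq J:_S\mathfrak{a}\subseteq\mathcal{P}$ this forces $\mathrm{ht}(J:_S\mathfrak{a})=s$, while $\mathrm{ht}\bigl((J:_S\mathfrak{a})+\mathfrak{a}\bigr)\ge\mathrm{ht}\bigl(I_r(\psi)S+\mathfrak{a}\bigr)=r+t\ge s+1$ supplies the geometric condition.

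Granting this, the structure theory of residual intersections of complete intersections yields that $S/(J:_S\mathfrak{a})$ is Cohen--Macaulay and $J:_S\mathfrak{a}=J+I_r(\psi)S$; in particular $J:_S\mathfrak{a}$ is unmixed of height $s$. Since any prime over $(J:_S\mathfrak{a})+\mathfrak{a}$ has height $\ge s+1$, no minimal prime of $J:_S\mathfrak{a}$ contains $\mathfrak{a}$, and the dichotomy of the previous paragraph then forces every such minimal prime to contain, hence (by height) equal, $\mathcal{P}$; thus $J:_S\mathfrak{a}$ is $\mathcal{P}$-primary. Localizing at $\mathcal{P}$, where $\mathfrak{a}$ is the unit ideal, gives $(J:_S\mathfrak{a})_\mathcal{P}=J_\mathcal{P}=\mathcal{P}_\mathcal{P}$, and contracting a $\mathcal{P}$-primary ideal back to $S$ yields $J:_S\mathfrak{a}=\mathcal{P}=J:_S\mathfrak{a}^\infty$. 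Finally I would contract $\mathcal{P}=J+I_r(\psi)S$ along $k[T_1,\dots,T_m]\hookrightarrow S$: an element of $k[T_1,\dots,T_m]$ lying in $J+I_r(\psi)S$ lands, modulo $\mathfrak{a}$, in $I_r(\psi)(S/\mathfrak{a})$, and since $\mathfrak{a}\cap k[T_1,\dots,T_m]=0$ with $k[T_1,\dots,T_m]\hookrightarrow S/\mathfrak{a}$ faithfully flat, $I_r(\psi)(S/\mathfrak{a})\cap k[T_1,\dots,T_m]=I_r(\psi)$; hence $I_r(\psi)=\mathcal{P}\cap k[T_1,\dots,T_m]$ is prime of height $t$. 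The part I expect to demand the most care is this passage through residual-intersection structure theory together with the ensuing squeeze identifying $J+I_r(\psi)S$, $J:_S\mathfrak{a}$, and $J:_S\mathfrak{a}^\infty$; the adjugate identity and the faithfully flat descent are routine, and the genuinely substantial input — that $\mathrm{ht}(I_r(\psi))$ is maximal — is precisely what this observation postpones.
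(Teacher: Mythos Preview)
Your argument is correct and follows essentially the same path as the paper: compute $\hght(\mathfrak{a}+I_r(\psi))\ge s+1$, run the dichotomy on minimal primes to verify that $J:_S\mathfrak{a}$ is a geometric $s$-residual intersection of the complete intersection $\mathfrak{a}$, invoke the residual-intersection structure theory (the paper cites \cite[4.8]{BKM} and \cite[1.5]{HU2}) to obtain $J:_S\mathfrak{a}=J+I_r(\psi)$ Cohen--Macaulay, deduce it is $\mathcal{P}$-primary, localize at $\mathcal{P}$ to get equality with $\mathcal{P}$, and finally contract to $k[T_1,\dots,T_m]$. The only differences are expository: you spell out the adjugate inclusion $I_r(\psi)\subseteq J:_S\mathfrak{a}$ and the faithfully flat descent to $k[T_1,\dots,T_m]$ more explicitly, and you phrase the dichotomy over $J+I_r(\psi)S$ rather than over $J:_S\mathfrak{a}$, but these are the same once the equality is known.
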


\begin{proof}
Assume that $\mathrm{ht}(I_r(\psi)) = \mathrm{max}\{0, s-r+1\}$.
First, note that $(\underline{a})^t \not\subset (\underline{a} \cdot \psi)$ for any $t \in \mathbb{N}$ by degree considerations. Thus $(\underline{a}) \not\subset (\underline{a} \cdot \psi) :_S (\underline{a})^\infty$. Then, since $(\underline{a})^t \cdot (\underline{a} \cdot \psi) :_S (\underline{a})^\infty \subset (\underline{a} \cdot \psi)$ for some $t \in \mathbb{N}$, we have that $(\underline{a} \cdot \psi) :_S (\underline{a})^\infty$ is the unique associated prime of $(\underline{a} \cdot \psi)$ not containing $(\underline{a})$. Since any minimal prime of $(\underline{a} \cdot \psi) :_S (\underline{a})$ contains either $(\underline{a})$ or $(\underline{a} \cdot \psi) :_S (\underline{a})^\infty$, any such prime contains either $(\underline{a}) + I_r(\psi)$ or $(\underline{a} \cdot \psi) :_S (\underline{a})^\infty$. But $(\underline{a} \cdot \psi) :_S (\underline{a})^\infty$ has height $s$, and $(\underline{a})+I_r(\psi)$ has height $s+1$ since $\underline{a}$ is a regular sequence on $S/I_r(\psi)$ (recall $I_r(\psi) \subset k[T_1, \ldots, T_m]$). Thus $\mathrm{ht}((\underline{a} \cdot \psi) :_S (\underline{a})) \geq s$ and $\mathrm{ht}((\underline{a}) + (\underline{a} \cdot \psi) :_S (\underline{a})) \geq s+1$, so $(\underline{a} \cdot \psi) :_S (\underline{a})$ is a geometric residual intersection. Then by \cite[4.8]{BKM}, \[(\underline{a} \cdot \psi) :_S (\underline{a}) = (\underline{a}\cdot \psi) + I_r(\psi).\] Also from \cite[1.5]{HU2} $(\underline{a} \cdot \psi) :_S (\underline{a})$ is Cohen Macaulay, hence an unmixed ideal of height $s$. Thus none of its minimal primes contain $(\underline{a})$ and therefore $(\underline{a}\cdot \psi) :_S (\underline{a})^\infty$ is its only associated prime. Then $(\underline{a} \cdot \psi) :_S (\underline{a})^\infty = (\underline{a} \cdot \psi) :_S (\underline{a})$ as both are equal to $(\underline{a} \cdot \psi)$ locally at $(\underline{a} \cdot \psi) :_S (\underline{a})^\infty$.

Lastly, since $(\underline{a} \cdot \psi) + I_r(\psi)$ is a prime ideal, $((\underline{a} \cdot \psi) + I_r(\psi))\cap k[T_1,\ldots,T_m]=I_r(\psi)$ is a prime ideal of $k[T_1, \ldots, T_m]$ as well.
\end{proof}

\begin{proof}[Proof of Theorem \ref{goup}]

We show that $\mathrm{ht}(I_r(\psi))=\mathrm{max}\{0,s-r+1\}$ by induction on $s$. Since the extension $k[T_1, \ldots, T_m] \subset S$ is flat, we may compute the height in the ring $k[T_1, \ldots, T_m]$ or in $S$.

If $s=0$, then certainly $\mathrm{ht}~I_r(\psi) = \mathrm{ht \ }(0) = 0$. Now $(\underline{a})^k(0 :_S (\underline{a}^\infty)) = 0$ for some $k \in \mathbb{N}$. As $r>0$, $(\underline{a})$ is not contained in any associated prime of the ring. This shows that $0:_S(\underline{a}) \subset 0:_S(\underline{a})^\infty = 0 = (\underline{a} \cdot \psi).$

Now assume that the result holds for matrices with less than $s$ columns and that $\psi$ is an $r$ by $s$ matrix as in the theorem so that $(\underline{a}\cdot \psi) :_S (\underline{a})^\infty$ is prime ideal of height $s>0$.


Arrange the columns of $\psi$ so that the degrees of the columns descend from left to right and let $\psi = [ \psi'  \ | \  \psi_{s} ]$ with $\psi'$ an $r \times s-1$ matrix and $\psi_s$ an $r \times 1$ matrix.

By Proposition \ref{godown}, $(\underline{a} \cdot \psi' ) :_S (\underline{a})^\infty$ is prime ideal of height $s-1$. Then by the induction hypothesis $\hght I_r(\psi')=\max\{0,s-r\}$, which by Observation \ref{geometric} shows that $(\underline{a} \cdot \psi' ) :_S (\underline{a})$ is a geometric residual intersection and 
 \[(\underline{a} \cdot \psi') :_S (\underline{a})^\infty = (\underline{a} \cdot \psi') :_S (\underline{a}) = (\underline{a} \cdot \psi') + I_r(\psi').\]

We prove the induction step in cases. Since $I_r(\psi ')$ and $I_r(\psi)$ are involved, we consider
the cases $s<r$, $s=r$, and $s>r$.

If $s<r$, $I_r(\psi)=0$ hence $\mathrm{ht}(I_r(\psi))=0$.
 
If $s =r$, we have $s-1 < r$. Thus $(\underline{a} \cdot \psi') = (\underline{a} \cdot \psi') + I_r(\psi ')$ is prime and $\underline{a} \cdot \psi_s \notin (\underline{a} \cdot \psi')$, as otherwise $(\underline{a} \cdot \psi ) :_S (\underline{a})^\infty$ is a minimal prime of $(\underline{a} \cdot \psi ')$ and is therefore a prime ideal of height at most $r-1$. So $(\underline{a} \cdot \psi)$ is a complete intersection of height $r$ and thus $(\underline{a} \cdot \psi) + I_r(\psi)$ has height at least $r$. Then $(\underline{a} \cdot \psi) :_S (\underline{a}) = (\underline{a} \cdot \psi) + I_r(\psi)$ \cite[3.3]{N1}. 
Note now that $\mathrm{det}\psi \neq 0$ since $\underline{a}$ is contained in a minimal prime of $(\underline{a}\cdot \psi)$. Hence $\mathrm{ht}I_r(\psi)=1=s-r+1$.
 
 If $s> r$, we have that $\mathrm{ht}~I_r(\psi') = s-r$. Since $(\underline{a}\cdot \psi '):_S (\underline{a})$ is a geometric residual intersection, $(\underline{a}) \cap ((\underline{a}\cdot \psi ') + I_r(\psi ')) = (\underline{a}\cdot \psi ')$ by \cite[1.5]{HU2}.
 
We show that $I_r(\psi)$ contains a regular element modulo $I_r(\psi ')$. If $f I_r(\psi) \subset I_r(\psi ')$ for some some $f \in k[T_1, \ldots, T_m]$, we will show that $f \in I_r(\psi ')$. As $f I_r(\psi) \subset I_r(\psi ')$,  $f I_r(\psi) \subset (\underline{a} \cdot \psi') + I_r(\psi ')$ which is prime. Therefore $f \in (\underline{a} \cdot \psi') + I_r(\psi ')$ or $I_r(\psi) \subset (\underline{a} \cdot \psi') + I_r(\psi ')$ which implies that $f \in I_r(\psi ')$ or $I_r(\psi) \subset I_r(\psi ')$ since these elements are in $k[T_1, \ldots, T_m]$.
 
 Assume by way of contradiction that $I_r(\psi) \subset I_r(\psi ')$. Consider the $r\times r$ submatrix consisting of the last $r$ columns of $\psi'$. Repeatedly applying Proposition \ref{godown} and making use of the case $s=r$, we see that the determinant of this submatrix is nonzero. So one of the $r-1 \times r-1$ minors from columns $s-r+1, \ldots , s-1$ is nonzero. Call this minor $\delta$ and assume it comes from deleting row $i$. Let $\Delta$ be the $r \times r$ minor of $\psi$ involving columns $s-r+1, \ldots, s$. Then $a_i \Delta \subset (\underline{a} \cdot \psi')$ as $\Delta \in I_r(\psi) \subset I_r(\psi ')$. But modulo $(\underline{a} \cdot \psi ')$, $a_i  \Delta \equiv \delta (\underline{a}\cdot \psi_s)$ by Cramer's rule. So $\delta (\underline{a} \cdot \psi_s) \subset (\underline{a}\cdot \psi ') + I_r(\psi ')$. But $\underline{a} \cdot \psi_s \notin (\underline{a}\cdot \psi ') + I_r(\psi ')$, otherwise $\underline{a} \cdot \psi_s \in (\underline{a}) \cap ((\underline{a}\cdot \psi ') + I_r(\psi ')) = (\underline{a}\cdot \psi ')$, contradicting the fact that $\mathrm{ht}((\underline{a} \cdot \psi) :_S (\underline{a})^\infty) = s$. Also $\delta \notin (\underline{a}\cdot \psi ') + I_r(\psi ')$, otherwise $\delta \in I_r(\psi ')$, which is impossible by degree considerations. This is a contradiction to the fact that $(\underline{a}\cdot \psi ') + I_r(\psi ')$ is a prime ideal. Thus $\mathrm{ht}~I_r(\psi) = s-r+1$.
 \end{proof}

\section{Almost Linearly Presented Ideals}
In this section, we prove our two main descriptions of the defining ideal of $\mathcal{R}(I)$ for almost linearly presented ideals $I$.
Our first description is as the saturation of the defining ideal of $\mathrm{Sym}(I)$ with respect to the ideal of variables of the polynomial ring. We show that the saturation stabilizes at the degree of the last column of the presentation matrix.

\begin{set}\label{origset} Let $R=k[x_1, \ldots, x_d]$ be a standard graded polynomial ring over a field $k$, and let $I=(\alpha_1, \ldots, \alpha_m) \subset R$ be a height $2$ perfect ideal with almost linear presentation matrix. That is, with respect to $\alpha_1, \ldots, \alpha_m$, $I$ has a Hilbert-Burch matrix of the form $\varphi = \left[v_1 | \cdots  | v_{m-1} \right]$, where $v_i \in (R_1)^m$ for $1\leq i \leq m-2$, and $v_{m-1} \in (R_n)^m$.

Assume that $m>d$ and $I$ also satisfies the $G_d$ condition, i.e. $\mu(I_P) \leq $ ht$P$ for all primes $P \in V(I)$ with ht$P < d$.\end{set}

\begin{notation}
Let $\varphi ' = \left[ v_1 | \cdots | v_{m-2}\right]$, and $B(\varphi), B(\varphi ')$ be a Jacobian dual of $\varphi, \varphi '$ respectively. That is $$[x_1\cdots x_d]\cdot B(\varphi)=[T_1\cdots T_m]\cdot \varphi\mbox{ and }[x_1\cdots x_d]\cdot B(\varphi')=[T_1\cdots T_m]\cdot \varphi'.$$
Let $S = R[T_1, \ldots, T_m]$ and $\mathcal{A}$ be the defining ideal of $\mathcal{R}(I)$, that is $\mathcal{A} =$ ker $\psi$ where $\psi : S \longrightarrow \mathcal{R}(I)=R[It]$ is the $R$-algebra homomorphism with $\psi(T_i)=\alpha_i t$.

Let the $S$-ideal $\mathcal{L}$ be the defining ideal for $\mathrm{Sym}(I)$, namely $\mathcal{L} = (L_1, \ldots, L_{m-2}, g)$ where $\left[T_1 \  \cdots \  T_m\right]\cdot \varphi = \left[L_1 \ \cdots \  L_{m-2} \  g \right]$. 
\end{notation}

\begin{rmkk} We assume in Setting \ref{origset} that $m>d$. For, if $m \leq d$ then $I$ satisfies the $G_\infty$ condition ($\mu(I_P) \leq $ ht $P$ for all $P \in \mathrm{Spec}(R)$) and hence by \cite[2.6]{HSV1} $I$ is of linear type, i.e. $\mathcal{A} = \mathcal{L}$. \end{rmkk}

The $G_d$ condition alone is enough to guarantee one description of the defining ideal.

\begin{obs}\label{0thform} With the assumptions of Setting \ref{origset}, $\mathcal{A} = \mathcal{L} : (x_1, \ldots, x_d)^\infty$.
\end{obs}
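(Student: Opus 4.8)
The plan is to prove the two inclusions of $\mathcal{A}=\mathcal{L}:(x_1,\ldots,x_d)^\infty$ separately. First, $\mathcal{L}:(x_1,\ldots,x_d)^\infty\subseteq\mathcal{A}$ is formal: the map $\psi$ factors through $\operatorname{Sym}(I)=S/\mathcal{L}$, so $\mathcal{L}\subseteq\mathcal{A}$, and the inclusion $R\hookrightarrow\mathcal{R}(I)$ forces $\mathcal{A}\cap R=0$, so none of the $x_i$ lies in the prime ideal $\mathcal{A}$; hence whenever $(x_1,\ldots,x_d)^k a\subseteq\mathcal{L}\subseteq\mathcal{A}$ we get $a\in\mathcal{A}$.

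For the reverse inclusion $\mathcal{A}\subseteq\mathcal{L}:(x_1,\ldots,x_d)^\infty$ I would show that the finitely generated $S$-module $\mathcal{A}/\mathcal{L}=\ker(\operatorname{Sym}(I)\to\mathcal{R}(I))$ is annihilated by a power of $(x_1,\ldots,x_d)S$, equivalently that $\operatorname{Supp}_S(\mathcal{A}/\mathcal{L})\subseteq V\big((x_1,\ldots,x_d)S\big)$. So I would fix a prime $\mathfrak{P}$ of $S$ not containing $(x_1,\ldots,x_d)$, set $P=\mathfrak{P}\cap R$ (so $P\neq(x_1,\ldots,x_d)$), and, using that $\operatorname{Sym}$ and $\mathcal{R}$ commute with localization at $R\setminus P$, reduce the vanishing of $(\mathcal{A}/\mathcal{L})_{\mathfrak{P}}$ to the statement that $I_P$ is of linear type over $R_P$. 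This is clear when $I\not\subseteq P$, so assume $I\subseteq P$. Since $I$ is a grade $2$ perfect ideal it is strongly Cohen-Macaulay, and this property localizes; I would then apply the linear-type criterion \cite[2.6]{HSV1} (the one already invoked in the remark following Setting \ref{origset}), for which it remains to check that $I_P$ satisfies $G_\infty$.

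Checking $G_\infty$ for $I_P$ is the heart of the matter. Any prime of $R_P$ containing $I_P$ comes from a prime $Q$ of $R$ with $I\subseteq Q\subseteq P$, so $Q\neq(x_1,\ldots,x_d)$, and I must verify $\mu(I_Q)\leq\operatorname{ht}Q$. When $\operatorname{ht}Q\leq d-1$ this is exactly the $G_d$ hypothesis. When $\operatorname{ht}Q=d$, $Q$ is a maximal ideal different from the homogeneous maximal ideal $(x_1,\ldots,x_d)$; here I would argue that $G_d$ forces $\operatorname{ht}I_{m-d+1}(\varphi)\geq d$ --- a minimal prime of $I_{m-d+1}(\varphi)$ of height at most $d-1$ would contain $I=I_{m-1}(\varphi)$ and contradict $G_d$ --- so the homogeneous ideal $I_{m-d+1}(\varphi)$ lies in no maximal ideal other than $(x_1,\ldots,x_d)$; hence $I_{m-d+1}(\varphi)\not\subseteq Q$, the matrix $\varphi\otimes\kappa(Q)$ has rank at least $m-d+1$, and, since $I\cong\operatorname{coker}\varphi$ by the Hilbert-Burch resolution, $\mu(I_Q)=m-\operatorname{rank}(\varphi\otimes\kappa(Q))\leq d-1<\operatorname{ht}Q$. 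Thus $I_P$ satisfies $G_\infty$, hence is of linear type, and the Observation follows.

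The only step that is not routine bookkeeping is this last passage from $G_d$ to $G_\infty$ at the non-homogeneous primes of height $d$: it uses homogeneity of $I$ and of the Fitting ideals $I_t(\varphi)$ together with the height bound on $I_{m-d+1}(\varphi)$. The remaining ingredients --- the reduction to supports, compatibility of $\operatorname{Sym}$ and $\mathcal{R}$ with localization, strong Cohen-Macaulayness of grade $2$ perfect ideals, and the linear-type criterion --- are all standard.
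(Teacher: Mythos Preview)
Your argument is correct and follows essentially the same route as the paper: both prove the nontrivial inclusion by showing that $I$ becomes of linear type once one localizes away from $(x_1,\ldots,x_d)$, invoking the strongly Cohen--Macaulay plus $G_\infty$ criterion \cite[2.6]{HSV1}. The paper simply asserts in one line that ``because of the $G_d$ condition, $I_s$ satisfies $G_\infty$ in $R_s$'' for $s\in(x_1,\ldots,x_d)$, whereas you have carefully justified the height-$d$ case via the homogeneity of the Fitting ideal $I_{m-d+1}(\varphi)$ and the bound $\hght I_{m-d+1}(\varphi)\geq d$; this is exactly the content the paper records later (just before Theorem~\ref{Iterated Jacobian Dual in the almost linearly presented case}) with a reference to \cite[6.6]{HSV2}.
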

\begin{proof}
Let $s \in (x_1, \ldots, x_d) \subset R$. Then because of the $G_d$ condition, $I_s$ satisfies $G_\infty$ in $R_s$. So $I_s$ is of linear type by \cite[2.6]{HSV1}. This means that $s^t\mathcal{A} \subset \mathcal{L}$ for some $t \in \mathbb{N}$. But clearly $\mathcal{L} : (x_1, \ldots, x_d)^i \subset \mathcal{A}$ for all $i$ since $(x_1, \ldots, x_d)^i \not\subset \mathcal{A}$ and $\mathcal{A}$ is a prime ideal. So $\mathcal{A} = \mathcal{L} : (x_1, \ldots, x_d)^\infty$.
\end{proof}
 
We will show in Theorem \ref{1stform} that in fact $\mathcal{A} = \mathcal{L} :_S (x_1, \ldots, x_d)^n$ where $n$ is the degree of the entries in the last column of the presentation matrix $\varphi$.

 \begin{obs}\label{dimsym} With the assumptions of Setting \ref{origset}, $\mathrm{ht}\mathcal{L}=d$.
 \end{obs}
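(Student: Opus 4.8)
The plan is to sandwich $\mathrm{ht}\,\mathcal{L}$ between $d$ and $d$ by analyzing the minimal primes of $\mathcal{L}$, exploiting the identity $\mathcal{A}=\mathcal{L}:(\underline{x})^\infty$ already established in Observation \ref{0thform}.

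For the upper bound, I would first observe that $\mathcal{L}\subseteq (x_1,\ldots,x_d)S$. Indeed, the generators of $\mathcal{L}$ are the entries $L_1,\ldots,L_{m-2},g$ of the row vector $[T_1\cdots T_m]\cdot\varphi$, and by Setting \ref{origset} every entry of $\varphi$ is a homogeneous form of $R$ of positive degree (degree $1$ in the first $m-2$ columns, degree $n\ge 1$ in the last), hence lies in $(x_1,\ldots,x_d)$. Since $S/(x_1,\ldots,x_d)S\cong k[T_1,\ldots,T_m]$ is a domain, $(x_1,\ldots,x_d)S$ is a prime ideal of $S$ of height $d$ containing $\mathcal{L}$, so $\mathrm{ht}\,\mathcal{L}\le d$.

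For the lower bound I would show that every minimal prime $P$ of $\mathcal{L}$ has height at least $d$. From $\mathcal{A}=\mathcal{L}:(\underline{x})^\infty$ there is $t\in\mathbb{N}$ with $(\underline{x})^t\mathcal{A}\subseteq\mathcal{L}\subseteq P$, so $P$ contains $(\underline{x})$ or $\mathcal{A}$. If $\mathcal{A}\subseteq P$, then, $\mathcal{A}$ being a prime containing $\mathcal{L}$ and $P$ being minimal over $\mathcal{L}$, we get $P=\mathcal{A}$, which has height $m-1$ (as recalled in the introduction, since $\mathcal{R}(I)=S/\mathcal{A}$ has dimension $d+1$); and $m>d$ forces $m-1\ge d$. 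If $(\underline{x})\subseteq P$, then $P\supseteq (\underline{x})S\supseteq\mathcal{L}$ with $(\underline{x})S$ prime, so minimality forces $P=(\underline{x})S$, of height $d$. In either case $\mathrm{ht}\,P\ge d$, hence $\mathrm{ht}\,\mathcal{L}\ge d$, and combined with the upper bound $\mathrm{ht}\,\mathcal{L}=d$.

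There is no serious obstacle here; the only point requiring care is the verification that the minimal primes of $\mathcal{L}$ are among $\mathcal{A}$ and $(\underline{x})S$, which rests entirely on the previously established identity $\mathcal{A}=\mathcal{L}:(\underline{x})^\infty$. One could alternatively invoke the general formula $\dim\mathrm{Sym}(I)=\max_{p\in V(I)}\bigl(\dim R/p+\mu(I_p)\bigr)$, but the minimal-prime argument above is shorter and self-contained given the earlier observations.
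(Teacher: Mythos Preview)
Your proof is correct but takes a genuinely different route from the paper's. The paper invokes the Huneke--Rossi formula $\dim\mathrm{Sym}(I)=\sup\{\mu(I_P)+\dim(R/P):P\in\mathrm{Spec}(R)\}$ and computes the supremum by cases (using $G_d$ for primes in $V(I)$ of height $<d$), obtaining $\dim\mathrm{Sym}(I)=m$ and hence $\mathrm{ht}\,\mathcal{L}=d$. Your argument instead sandwiches the height directly: the containment $\mathcal{L}\subseteq(\underline{x})S$ gives the upper bound, and Observation~\ref{0thform} forces every minimal prime of $\mathcal{L}$ to be either $\mathcal{A}$ or $(\underline{x})S$, both of height $\ge d$. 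What you gain is an elementary, self-contained argument (given the prior observations) that avoids the external dimension formula and, as a byproduct, identifies the minimal primes of $\mathcal{L}$ explicitly. What the paper's approach buys is independence from Observation~\ref{0thform} and from the height computation for $\mathcal{A}$; it would work even if those facts had not yet been established. One small correction to your closing remark: the supremum in the Huneke--Rossi formula ranges over all of $\mathrm{Spec}(R)$, not just $V(I)$.
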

 \begin{proof} Recall that $\mathrm{Sym}(I) \cong S/\mathcal{L}$, where $S = R[T_1, \ldots, T_m]$. By \cite[2.6]{HR}, \[\mbox{dim}~\mathrm{Sym}(I) =  \mbox{sup}  \left\{ \mu(I_P) + \mathrm{dim}(R/P)  \ | \ P \in \mathrm{Spec}(R) \right\} .\]
 
 To find the supremum on the right hand side, we compute $\mu(I_P)+\mathrm{dim}(R/P)$ in different cases. For $P \notin V(I)$, $\mu(I_P) =1$ so this number is less than or equal to $d+1$. If $\mathrm{ht}P<d$ and $P \in V(I)$, $\mu(I_P) + \mbox{dim}(R/P) \leq \mbox{ht}P + \mbox{dim}(R/P) = d$. When $\mbox{ht}P=d$, this number is the number of generators of $I$ which is $m$. Thus dim $\mathrm{Sym}(I) = m$ and ht $\mathcal{L} = d$.
\end{proof}

\begin{obs}\label{1stcolon} With the assumptions of Setting \ref{origset},  $\mathcal{L} : (x_1, \ldots, x_d) = \mathcal{L}+I_d(B(\varphi))$.
\end{obs}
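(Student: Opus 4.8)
The plan is to prove $\mathcal{L}:(\underline{x})=\mathcal{L}+I_d(B(\varphi))$ by establishing the two inclusions. For $\mathcal{L}+I_d(B(\varphi))\subseteq \mathcal{L}:(\underline{x})$ one uses the standard product rule for the Jacobian dual: the entries of $[x_1\cdots x_d]\cdot B(\varphi)$ generate $\mathcal{L}$, so for any $d\times d$ submatrix $M$ of $B(\varphi)$, multiplying $[x_1\cdots x_d]\cdot M$ on the right by the classical adjoint of $M$ gives $x_i(\det M)\in\mathcal{L}$ for every $i$; hence $I_d(B(\varphi))\subseteq\mathcal{L}:(\underline{x})$, and the inclusion follows since also $\mathcal{L}\subseteq\mathcal{L}:(\underline{x})$.

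For the reverse inclusion I would realize $\mathcal{L}:(\underline{x})$ as a residual intersection of the complete intersection $(\underline{x})$ and then apply the Huneke--Ulrich structure theorem \cite{HU1} recalled in the introduction. Here $(\underline{x})$ is a complete intersection of height $d$ in the Cohen--Macaulay ring $S$, the ideal $\mathcal{L}=I_1([x_1\cdots x_d]\cdot B(\varphi))\subseteq(\underline{x})$ is generated by the $m-1$ forms $L_1,\dots,L_{m-2},g$, and $m-1\ge d$ by Setting \ref{origset}. Thus, once it is shown that $\mathrm{ht}(\mathcal{L}:(\underline{x}))\ge m-1$, the ideal $\mathcal{L}:(\underline{x})$ is an $(m-1)$-residual intersection of $(\underline{x})$, and \cite{HU1} yields $\mathcal{L}:(\underline{x})=\mathcal{L}+I_d(B(\varphi))$. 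No geometric hypothesis is needed, since $(\underline{x})$, being a complete intersection, satisfies all the depth and $G_\infty$ conditions of the residual intersection machinery automatically.

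The heart of the matter is therefore the inequality $\mathrm{ht}(\mathcal{L}:(\underline{x}))\ge m-1$, and I would prove it by examining the minimal primes $P$ of $\mathcal{L}:(\underline{x})$. By Observation \ref{0thform} together with $\dim\mathcal{R}(I)=d+1$, the ideal $\mathcal{A}=\mathcal{L}:(\underline{x})^\infty$ is prime of height $m-1$ and does not contain $(\underline{x})$. If $(\underline{x})\not\subseteq P$, then since $P\supseteq\mathcal{L}:(\underline{x})$ and $(\mathcal{L}:(\underline{x})):(\underline{x})^\infty=\mathcal{L}:(\underline{x})^\infty=\mathcal{A}$, we get $\mathcal{A}\subseteq P$, and minimality of $P$ forces $P=\mathcal{A}$, of height $m-1$. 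If $(\underline{x})\subseteq P$, then by the inclusion already proved $P\supseteq(\underline{x})+I_d(B(\varphi))$. Now for $1\le i\le m-2$ the $i$-th columns of $B(\varphi)$ and of $B(\varphi')$ have the same product with $[x_1\cdots x_d]$, so their difference is a syzygy of the regular sequence $x_1,\dots,x_d$ and hence has all entries in $(\underline{x})S$; consequently, modulo $(\underline{x})$, the first $m-2$ columns of $B(\varphi)$ reduce to the columns of $B(\varphi')$, and the image of $I_d(B(\varphi))$ in $S/(\underline{x})=k[T_1,\dots,T_m]$ contains $I_d(B(\varphi'))$. Since $x_1,\dots,x_d$ is a regular sequence on $S/I_d(B(\varphi'))$, this gives $\mathrm{ht}\,P\ge d+\mathrm{ht}_{k[T_1,\dots,T_m]}I_d(B(\varphi'))$. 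Finally $\mathrm{ht}\,I_d(B(\varphi'))=m-d-1$: applying Proposition \ref{godown} once to $\mathcal{L}=(L_1,\dots,L_{m-2},g)$ (removing $g$ is allowed, since in the grading of $S$ by $T$-degree every $L_i$ and $g$ is homogeneous of degree one and lies in $(\underline{x})S\cap S_+$) shows $(L_1,\dots,L_{m-2}):(\underline{x})^\infty$ is prime of height $m-2$, and then Theorem \ref{goup} applied to $B(\varphi')$ (whose entries lie in $k[T_1,\dots,T_m]$) gives $\mathrm{ht}\,I_d(B(\varphi'))=\max\{0,(m-2)-d+1\}=m-d-1$. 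Hence $\mathrm{ht}\,P\ge m-1$ in this case too, so every minimal prime of $\mathcal{L}:(\underline{x})$ has height at least $m-1$, as required.

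The step I expect to be the main obstacle is precisely this height estimate, whose two delicate points are: correctly passing from $\varphi$ to its linear part $\varphi'$ (so that Theorem \ref{goup}, which requires entries in $k[T_1,\dots,T_m]$, becomes applicable) via Proposition \ref{godown}, and checking that reduction modulo $(\underline{x})$ does not lower the height of $I_d(B(\varphi'))$. Once the primeness and height of $\mathcal{A}$ and the maximal height of $I_d(B(\varphi'))$ are in hand, the easy inclusion and the appeal to \cite{HU1} are formal.
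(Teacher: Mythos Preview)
Your proposal is correct and follows essentially the same route as the paper: both arguments use Proposition~\ref{godown} and Theorem~\ref{goup} to show that $\mathrm{ht}\,I_d(B(\varphi'))=m-d-1$, then deduce that every relevant minimal prime has height at least $m-1$ (splitting into the cases $(\underline{x})\subseteq P$ and $(\underline{x})\not\subseteq P$), and conclude via the Huneke--Ulrich residual intersection theorem. The only cosmetic differences are that the paper computes the height of $\mathcal{L}+I_d(B(\varphi))$ rather than of $\mathcal{L}:(\underline{x})$ directly (which is equivalent once the easy inclusion is known), and that the paper implicitly takes $B(\varphi)=[B(\varphi')\mid *]$ whereas you justify the passage from $I_d(B(\varphi))$ to $I_d(B(\varphi'))$ modulo $(\underline{x})$ via the syzygy argument.
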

\begin{proof} We know that $\mathrm{dim}~ \mathcal{R} (I) = d+1$, so ht $\mathcal{A} = m-1$.

As $\mathcal{A} = \mathcal{L} : (x_1, \ldots, x_d)^\infty = (\underline{x}\cdot B(\phi)) : (\underline{x})^\infty$ is a prime ideal of height $m-1$, by Proposition \ref{godown}, $(\underline{x}\cdot B(\varphi ')) : (\underline{x})^\infty$ is a prime ideal of height $m-2$. Then since the entries of $B(\varphi ')$ are in $k[T_1, \ldots, T_m]$, ht $I_d(B(\varphi')) = m-d-1$ by Theorem \ref{goup}. So \[\mathrm{ht}\left( (x_1, \ldots, x_d)+I_d(B(\varphi '))\right) = m-d-1+d = m-1\] since $x_1, \ldots, x_d$ is a regular sequence modulo $I_d(B(\varphi '))$.

As $(x_1, \ldots, x_d)^t \mathcal{A} \subset \mathcal{L} \subset \mathcal{L} + I_d(B(\varphi))$ for some $t$, every minimal prime of $\mathcal{L} + I_d(B(\varphi))$ contains either $(x_1, \ldots, x_d)+I_d(B(\varphi'))$ or $\mathcal{A}$.  So ht$\left(\mathcal{L} + I_d(B(\varphi))\right) \geq m-1$. Since $\mathcal{L} + I_d(B(\varphi)) \subset \mathcal{L} : (x_1, \ldots, x_d)$ it follows that $\mathrm{ht} (\mathcal{L} : (x_1, \ldots, x_d)) \geq m-1$. Thus by \cite[1.5 and 1.8]{HU3}, $\mathcal{L} : (x_1, \ldots, x_d) = \mathcal{L} + I_d(B(\varphi))$.
\end{proof}

\noindent Now we make use of a Lemma. The generalized version of the lemma presented here was formulated by Professor Bernd Ulrich.

\begin{lem} Let $R$ be a Noetherian ring and $I \subset R$ an $R$-ideal. If $I^n \cap (0 : I) = 0$ for some $n \in \mathbb{N}$, then $I^n (0 : I^\infty ) = 0$. \end{lem}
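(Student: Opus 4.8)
The plan is to reduce everything to the formal observation that the hypothesis $I^n \cap (0 : I) = 0$ already forces $I^n$ to meet \emph{every} colon $(0 : I^k)$ trivially, after which the conclusion is immediate. So the first step is to prove, by induction on $k \geq 0$, that $I^n \cap (0 : I^k) = 0$. The case $k = 0$ is trivial since $(0 : R) = 0$. For $k \geq 1$, let $a \in I^n \cap (0 : I^k)$. From $I^k a = 0$ and $I^k = I \cdot I^{k-1}$ we get $I \cdot (I^{k-1} a) = 0$, so $I^{k-1} a \subseteq (0 : I)$; on the other hand $I^{k-1} a \subseteq I^n$ because $a \in I^n$ and $I^n$ is an ideal. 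Hence $I^{k-1} a \subseteq I^n \cap (0 : I) = 0$, which says $a \in I^n \cap (0 : I^{k-1})$; by the inductive hypothesis this is $0$, so $a = 0$.

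The second step is the deduction. Since $R$ is Noetherian, $(0 : I^\infty) = \bigcup_{k \geq 1} (0 : I^k)$, so $I^n (0 : I^\infty) = \bigcup_{k \geq 1} I^n (0 : I^k)$, and it is enough to show $I^n (0 : I^k) = 0$ for each $k$. This ideal is generated by the products $ax$ with $a \in I^n$ and $x \in (0 : I^k)$; each such product lies in $I^n$, and it also lies in $(0 : I^k)$ since $I^k(ax) = a (I^k x) = 0$. Therefore $I^n (0 : I^k) \subseteq I^n \cap (0 : I^k) = 0$ by the first step, and the statement follows.

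I do not expect a real obstacle. The one point worth flagging is that one should not try to induct directly on the statement $I^n (0 : I^k) = 0$: starting from $I^n (0 : I^{k-1}) = 0$ and $x \in (0 : I^k)$ one only obtains $I^{n+1} x = 0$, which is weaker than what is wanted, so it is essential to strengthen to the intersection statement $I^n \cap (0 : I^k) = 0$ before running the induction. Noetherianity enters only to make sense of, and to stabilize, the chain of colon ideals; it is otherwise not used.
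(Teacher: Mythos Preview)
Your proof is correct. The induction on $k$ showing $I^n \cap (0:I^k)=0$ is clean, and the deduction $I^n(0:I^k)\subseteq I^n\cap(0:I^k)$ is immediate.

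The paper takes a slightly different, shorter route: rather than inducting upward on $k$, it uses Noetherianity to choose the \emph{smallest} $w$ with $I^w(0:I^\infty)=0$ and argues by contradiction that $w\le n$. Indeed if $w>n$ then $I^{w-1}(0:I^\infty)$ is nonzero, lies in $I^n$ (since $w-1\ge n$), and lies in $(0:I)$ (since multiplying by $I$ gives $I^w(0:I^\infty)=0$); this contradicts the hypothesis. So the paper collapses your induction into a single minimality step. Your version, on the other hand, establishes the stronger intermediate fact $I^n\cap(0:I^k)=0$ for every $k$, which is not stated in the paper but is a pleasant byproduct. Both arguments rest on the same core observation that anything in $I^n$ killed by $I$ must vanish; they just organize the bookkeeping differently.
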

\begin{proof}
Let $w \in \mathbb{N}$ be smallest so that $I^w(0 : I^\infty) = 0$. Assume by way of contradiction that $w>n$. Then $0 \neq I^{w-1}(0 : I^\infty)$. But $I^{w-1}(0 : I^\infty) \subset I^n \cap (0:I) = 0$ \quad (Since $w-1 \geq n$, and $I^w(0:I^\infty)=0$).
\end{proof}

\begin{thm}\label{1stform} Use the assumptions of Setting \ref{origset}, and in particular let $n$ be the degree of the entries in the last column of $\varphi$. One has $\mathcal{A} = \mathcal{L} : (x_1, \ldots, x_d)^n$. \end{thm}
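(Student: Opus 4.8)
The plan is to reduce the statement to the single inclusion $(\underline{x})^n\mathcal{A}\subseteq\mathcal{L}$, since the reverse inclusion $\mathcal{L}:(\underline{x})^n\subseteq\mathcal{L}:(\underline{x})^\infty=\mathcal{A}$ is immediate from Observation~\ref{0thform}. To establish $(\underline{x})^n\mathcal{A}\subseteq\mathcal{L}$, I would apply the Lemma preceding this theorem to the Noetherian ring $\mathrm{Sym}(I)=S/\mathcal{L}$ together with the ideal $(\underline{x})\mathrm{Sym}(I)$. Here $(0:_{\mathrm{Sym}(I)}(\underline{x})^\infty)=\mathcal{A}/\mathcal{L}$ by Observation~\ref{0thform}, and $(0:_{\mathrm{Sym}(I)}(\underline{x}))=(\mathcal{L}+I_d(B(\varphi)))/\mathcal{L}$ by Observation~\ref{1stcolon}. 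Thus the Lemma shows that it is enough to prove
\[(\underline{x})^nS\ \cap\ \bigl(\mathcal{L}+I_d(B(\varphi))\bigr)\ \subseteq\ \mathcal{L}.\]

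For this containment I would bigrade $S=R[T_1,\dots,T_m]=k[x_1,\dots,x_d,T_1,\dots,T_m]$ by ($x$-degree, $T$-degree). Both $\mathcal{L}$ and $I_d(B(\varphi))$ are bihomogeneous, and $(\underline{x})^nS$ is the sum of the bihomogeneous components of $x$-degree at least $n$, so it suffices to take a bihomogeneous $h\in\mathcal{L}+I_d(B(\varphi))$ of $x$-degree $p\ge n$ and show $h\in\mathcal{L}$. Writing $B(\varphi)=[\,B(\varphi')\mid b\,]$, the entries of $B(\varphi')$ are linear in the $T_i$ (bidegree $(0,1)$), while the entries of $b$ have bidegree $(n-1,1)$, coming from $[T_1\cdots T_m]\cdot v_{m-1}$. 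Hence the $d\times d$ minors of $B(\varphi)$ are of two kinds: those avoiding the last column, of bidegree $(0,d)$ (these generate $I_d(B(\varphi'))$, and there are none when $m=d+1$), and those using the last column, of bidegree $(n-1,d)$. Expanding $h$ as an $\mathcal{L}$-part plus an $S$-combination of these minors, with all terms taken bihomogeneous of the appropriate bidegrees, and comparing the $x$-degree $p$, each minor of the first kind acquires a coefficient of $x$-degree $p\ge n\ge 1$, and each minor of the second kind a coefficient of $x$-degree $p-(n-1)\ge 1$; in either case the coefficient lies in $(\underline{x})$.

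To finish, observe that $I_d(B(\varphi))\subseteq\mathcal{L}+I_d(B(\varphi))=\mathcal{L}:(\underline{x})$, so $(\underline{x})\,I_d(B(\varphi))\subseteq\mathcal{L}$; consequently every minor term in the expansion of $h$ already lies in $\mathcal{L}$, and hence so does $h$. This proves the displayed containment, and the Lemma then gives $(\underline{x})^n\mathcal{A}\subseteq\mathcal{L}$, i.e.\ $\mathcal{A}=\mathcal{L}:(\underline{x})^n$. I do not expect a genuine obstacle here once Observations~\ref{0thform} and~\ref{1stcolon} and the Lemma are available: the one real idea is that the minors of $B(\varphi)$ involving the nonlinear column have $x$-degree only $n-1$, exactly one below the threshold $n$, so that intersecting with $(\underline{x})^n$ forces every coefficient into $(\underline{x})$ and thence, because $(\underline{x})\,I_d(B(\varphi))\subseteq\mathcal{L}$, back into $\mathcal{L}$. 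The only points needing minor care are checking the inequality $p-(n-1)\ge 1$ at the boundary $p=n$ (and in the degenerate cases $n=1$ and $m=d+1$) and justifying the reduction to bihomogeneous elements.
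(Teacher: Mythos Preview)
Your proof is correct and follows essentially the same route as the paper's: both reduce via the preceding Lemma (applied in $\mathrm{Sym}(I)$) to the intersection $(\underline{x})^n\cap(\mathcal{L}+I_d(B(\varphi)))\subseteq\mathcal{L}$, and both settle this by the bidegree argument that the generating minors of $I_d(B(\varphi))$ have $x$-degree at most $n-1$, so any bihomogeneous element of $x$-degree $\geq n$ in the intersection lies in $\mathcal{L}+(\underline{x})I_d(B(\varphi))\subseteq\mathcal{L}$. Your version is slightly more explicit in separating the minors into those of $x$-degree $0$ and $n-1$, but the argument is the same.
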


\begin{proof}
From Observation \ref{0thform}, $\mathcal{A} = \mathcal{L} : (x_1, \ldots, x_d)^\infty$. Now $\mathcal{L} : ((x_1, \ldots, x_d)+\mathcal{L}) = \mathcal{L} : (x_1, \ldots, x_d) = \mathcal{L} + I_d(B(\varphi))$ by Observation \ref{1stcolon}.

Also, $((x_1, \ldots, x_d)^n + \mathcal{L}) \cap \left(\mathcal{L} + I_d(B(\varphi))\right) = \mathcal{L}$. This is because all ideals involved are bi-homogeneous and $I_d(B(\varphi))$ is generated by elements of $(\underline{x})$-degree $\leq n-1$. Thus any element of the intersection with $(\underline{x})$-degree $\geq n$ is in $\mathcal{L} + (x_1, \ldots, x_d)^n \cap I_d(B(\varphi)) \subset \mathcal{L} + (x_1, \ldots, x_d)I_d(B(\varphi)) \subset \mathcal{L}$. Also any element of the intersection with $(\underline{x})$-degree $< n$ is in $\mathcal{L}$.

Now applying the previous lemma to the image of $(x_1, \ldots, x_d)$ in the ring $\mathrm{Sym}(I)$ we see that \[((x_1, \ldots, x_d)^n + \mathcal{L})\mathcal{A} = ((x_1, \ldots, x_d)^n + \mathcal{L})(\mathcal{L} : (x_1, \ldots, x_d)^\infty) \subset \mathcal{L}.\] Thus $\mathcal{L} : (x_1, \ldots, x_d)^n \subset \mathcal{A} \subset \mathcal{L} : (x_1, \ldots, x_d)^n$.
\end{proof}

 \ 
 
 Next we prove a different description of the defining ideal. Following the path laid in \cite{KPU1} we find a ring that surjects onto the Rees algebra so that the kernel is a height $1$ prime ideal in that ring. We will use this description in the following sections to compute many elements of the defining ideal of the Rees ring and, in a special case, the entire ideal.
 
In the remainder of this section we will use the assumptions of Setting \ref{origset} along with the following notation:
 \begin{notation}\label{2ndnot} Let $J$ be the $S$-ideal $(L_1, \ldots, L_{m-2})+I_d(B(\varphi '))$ and let $A$ be the ring $S/J$.  Let \ $\bar{ \ }$ denote images in the ring $A$. Let $B$ be the $d-1 \times m-2$ matrix obtained by deleting the last row from $B(\varphi ')$. Define the $S$-ideal $K$ to be $(L_1, \ldots, L_{m-2}) + I_{d-1}(B) + (x_d)$.
 
 \end{notation}
 We now give a description of $\overline{\mathcal{A}}$ as an $A$-ideal.
 
  \ 
 \begin{obs}\label{KisCM} The ring $A$ is a Cohen Macaulay domain of dimension $d+2$, and the ideals $\overline{K}$ and $\overline{(x_1, \ldots, x_d)}$ are Cohen Macaulay $A$-ideals of height $1$. Furthermore, $\overline{(x_1, \ldots, x_d)}$ is a prime ideal.\end{obs}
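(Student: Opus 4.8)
The plan is to read off the claims about $A$ and $\overline{(x_1,\ldots,x_d)}$ from Theorem~\ref{goup} and the residual intersection machinery already used in Observation~\ref{1stcolon}, and to reduce the only new point---that $\overline{K}$ is Cohen--Macaulay of height $1$---to the single height computation $\mathrm{ht}\,I_{d-1}(B)=m-d$. Recall from the proof of Observation~\ref{1stcolon} that $(\underline{x}\cdot B(\varphi')):_S(\underline{x})^\infty$ is a prime ideal of height $m-2$; since the entries of $B(\varphi')$ are $T$-homogeneous of the same positive degree in $k[T_1,\ldots,T_m]$, Theorem~\ref{goup} applies and gives
\[
J=(L_1,\ldots,L_{m-2})+I_d(B(\varphi'))=(\underline{x}\cdot B(\varphi'))+I_d(B(\varphi'))=(\underline{x}\cdot B(\varphi')):_S(\underline{x})^\infty,
\]
a prime ideal which is a geometric residual intersection of the complete intersection $(\underline{x})$. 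Hence $A=S/J$ is a domain, and $J$ is Cohen--Macaulay of height $m-2$ by \cite[1.5]{HU2}, so $A$ is Cohen--Macaulay of dimension $\dim S-(m-2)=d+2$. As $(\underline{x}\cdot B(\varphi'))\subseteq(x_1,\ldots,x_d)$, we get $A/\overline{(x_1,\ldots,x_d)}\cong k[T_1,\ldots,T_m]/I_d(B(\varphi'))$; by Theorem~\ref{goup} the ideal $I_d(B(\varphi'))$ is prime of height $m-d-1$, so $\overline{(x_1,\ldots,x_d)}$ is prime, and since it is the ideal of maximal minors of the $d\times(m-2)$ matrix $B(\varphi')$ attaining the maximal height $(m-2)-d+1$, the quotient $k[T_1,\ldots,T_m]/I_d(B(\varphi'))$ is Cohen--Macaulay by the Eagon--Northcott theorem. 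Since $A$ is a Cohen--Macaulay domain, hence catenary and equidimensional, and $\dim A/\overline{(x_1,\ldots,x_d)}=m-(m-d-1)=d+1$, the ideal $\overline{(x_1,\ldots,x_d)}$ has height $1$. (When $m=d+1$ one has $I_d(B(\varphi'))=0$ and all of this holds trivially.)

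For $\overline{K}$, expanding a $d\times d$ minor of $B(\varphi')$ along the row deleted to form $B$ shows $I_d(B(\varphi'))\subseteq I_{d-1}(B)$, so $J\subseteq K$ and $A/\overline{K}=S/K$. Reducing modulo $x_d$ turns $(L_1,\ldots,L_{m-2})=(\underline{x}\cdot B(\varphi'))$ into $(\underline{x}'\cdot B)$, where $\underline{x}'=x_1,\ldots,x_{d-1}$, so, writing $\overline{S}=S/(x_d)=k[x_1,\ldots,x_{d-1},T_1,\ldots,T_m]$, we obtain $S/K\cong\overline{S}/((\underline{x}'\cdot B)+I_{d-1}(B))$. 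It therefore suffices to show that $(\underline{x}'\cdot B)+I_{d-1}(B)$ is a Cohen--Macaulay $\overline{S}$-ideal of height $m-2$: then $\dim S/K=(d-1+m)-(m-2)=d+1$ and $\mathrm{ht}_A\overline{K}=(d+2)-(d+1)=1$ as before. Two facts are immediate: $\overline{x_d}$ is a nonzerodivisor on the domain $A$ (as $x_d\notin J$ by bidegree), so $A/\overline{x_d}A\cong\overline{S}/((\underline{x}'\cdot B)+I_d(B(\varphi')))$ is Cohen--Macaulay of dimension $d+1$, whence $\mathrm{ht}_{\overline{S}}((\underline{x}'\cdot B)+I_d(B(\varphi')))=m-2$, and therefore $\mathrm{ht}_{\overline{S}}((\underline{x}'\cdot B)+I_{d-1}(B))\ge m-2$.

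The crux is the identity $\mathrm{ht}\,I_{d-1}(B)=m-d$ in $k[T_1,\ldots,T_m]$. The inequality $\le m-d$ is the generic height of the ideal of maximal minors of the $(d-1)\times(m-2)$ matrix $B$. For the reverse inequality, suppose $\mathrm{ht}\,I_{d-1}(B)\le m-d-1$ and let $\mathfrak{p}$ be a prime minimal over $I_{d-1}(B)$ of that height; from $I_d(B(\varphi'))\subseteq I_{d-1}(B)\subseteq\mathfrak{p}$ and the primeness of $I_d(B(\varphi'))$ (of height $m-d-1$) we get $\mathfrak{p}=I_d(B(\varphi'))$, hence $I_{d-1}(B)\subseteq I_d(B(\varphi'))$; but the $(d-1)$-minors of the $T$-linear matrix $B$ are $T$-homogeneous of degree $d-1$ while $I_d(B(\varphi'))$ is generated in $T$-degree $d$, so $I_{d-1}(B)=0$, which forces $m=d+1$. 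In the borderline case $m=d+1$, $B$ is a square $(d-1)\times(d-1)$ matrix, $I_d(B(\varphi'))=0$, and $(\underline{x}'\cdot B)$ is a complete intersection in $\overline{S}$ whose $m-2=d-1$ generators $L_1',\ldots,L_{d-1}'$ form a regular sequence (its height is $d-1$ by the computation above with $I_d(B(\varphi'))=0$); if $\det B=0$ then the columns of $B$ are $k(T)$-linearly dependent, producing $c_1,\ldots,c_{d-1}\in k[T_1,\ldots,T_m]$, not all zero, with $\sum_j c_jL_j'=0$, and, the $L_j'$ being a regular sequence, each $c_j\in(L_1',\ldots,L_{d-1}')\subseteq(\underline{x}')\overline{S}$, forcing $c_j=0$, a contradiction. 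Thus $\mathrm{ht}\,I_{d-1}(B)=m-d$ in all cases.

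Granting this, $x_1,\ldots,x_{d-1}$ is a regular sequence on $\overline{S}/I_{d-1}(B)$, so $\mathrm{ht}_{\overline{S}}((\underline{x}')+I_{d-1}(B))=(d-1)+(m-d)=m-1$; combined with $\mathrm{ht}_{\overline{S}}((\underline{x}'\cdot B)+I_{d-1}(B))\ge m-2$ and the standard inclusion $I_{d-1}(B)\subseteq(\underline{x}'\cdot B):_{\overline{S}}(\underline{x}')$, this exhibits $(\underline{x}'\cdot B):_{\overline{S}}(\underline{x}')$ as a geometric $(m-2)$-residual intersection of the complete intersection $(\underline{x}')$ (note $m-2\ge d-1$ since $m>d$). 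Exactly as in the last part of the proof of Observation~\ref{geometric}, \cite[4.8]{BKM} then gives $(\underline{x}'\cdot B):_{\overline{S}}(\underline{x}')=(\underline{x}'\cdot B)+I_{d-1}(B)$ and \cite[1.5]{HU2} shows this ideal is Cohen--Macaulay of height $m-2$, which finishes the proof. The main obstacle is the height identity $\mathrm{ht}\,I_{d-1}(B)=m-d$: the inequality $\ge m-d$ relies on the primeness of $I_d(B(\varphi'))$ furnished by Theorem~\ref{goup} together with a bidegree count, and the borderline case $\mu(I)=d+1$ has to be handled separately using that $(\underline{x}'\cdot B)$ is then a complete intersection and that a syzygy of a regular sequence with coefficients in $k[T_1,\ldots,T_m]$ necessarily vanishes.
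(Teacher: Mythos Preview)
Your argument is correct. The treatment of $A$ and of $\overline{(x_1,\ldots,x_d)}$ is essentially the paper's proof, just phrased via the isomorphism $A/\overline{(x_1,\ldots,x_d)}\cong k[T_1,\ldots,T_m]/I_d(B(\varphi'))$ rather than via the preimage $(x_1,\ldots,x_d)+J$ in $S$.

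For $\overline{K}$ you take a genuinely different route. The paper observes directly that $J\subset K$ and $x_d\in K\setminus J$, so $\mathrm{ht}_S K\ge m-1$; writing $K=(\tilde L_1,\ldots,\tilde L_{m-2})+I_{d-1}(B)+(x_d)$ gives $\mathrm{ht}\big((\tilde L_1,\ldots,\tilde L_{m-2})+I_{d-1}(B)\big)\ge m-2$, and then \cite[1.5,\,1.8]{HU3} alone yields that this ideal is Cohen--Macaulay of height exactly $m-2$ and equals $(\tilde L_1,\ldots,\tilde L_{m-2}):_S(x_1,\ldots,x_{d-1})$---no separate verification of geometricity is needed. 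You instead prove the sharper statement $\mathrm{ht}\,I_{d-1}(B)=m-d$ (using primeness of $I_d(B(\varphi'))$, the $T$-degree count, and a separate argument for the borderline $m=d+1$), deduce geometricity from $\mathrm{ht}\big((\underline{x}')+I_{d-1}(B)\big)=m-1$, and then invoke \cite[4.8]{BKM} and \cite[1.5]{HU2}. Your height identity is correct and of some independent interest (the paper only shows $I_{d-1}(B)\neq 0$ later, in the proof of Lemma~\ref{colon}), but for the present observation it is extra work: the single reference to \cite{HU3} already delivers Cohen--Macaulayness once $\mathrm{ht}\ge m-2$ is known.
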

 \begin{proof}
  As before, $(\underline{x}\cdot B(\varphi ')) :_S (\underline{x})^\infty$ is a prime ideal of height $m-2$ by Proposition \ref{godown}. Since the entries of $B(\varphi ')$ are in $k[T_1, \ldots, T_m]$, \[ (\underline{x}\cdot B(\varphi ')) :_S (\underline{x})^\infty = (\underline{x}\cdot B(\varphi ')) :_S (\underline{x}) = (\underline{x}\cdot B(\varphi ')) + I_d(B(\varphi ')) = J\] is a prime ideal of height $m-2$ that is a residual intersection, hence Cohen-Macaulay by Theorem \ref{goup}. Thus $A$ is a Cohen-Macaulay domain of dimension $d+2$.
   
   Also $K$ has height at least $m-1$ as $J \subset K$ and $x_d \in K \setminus J$. Notice that $K$ can also be written as $(\tilde{L}_1, \ldots, \tilde{L}_{m-2}) + I_{d-1}(B) + (x_d)$ where $\big[\tilde{L}_1 \ \cdots \ \tilde{L}_{m-2}\big] = \left[x_1 \ \cdots \ x_{d-1}\right]\cdot B$. Therefore $(\tilde{L}_1, \ldots, \tilde{L}_{m-2}) + I_{d-1}(B)$ has height at least $m-2$. This means that it has height equal to $m-2$, is Cohen-Macaulay, and $(\tilde{L}_1, \ldots, \tilde{L}_{m-2}) + I_{d-1}(B) = (\tilde{L}_1, \ldots, \tilde{L}_{m-2}) :_S (x_1, \ldots, x_{d-1})$ (\cite[1.5 and 1.8]{HU3}). Then $K$ has height exactly $m-1$ and is also Cohen-Macaulay. This means that in the Cohen Macaulay ring $A$, $\overline{K}$ is a height $1$ ideal that is Cohen Macaulay.
    
    By Proposition \ref{godown} and Theorem \ref{goup}, ht$(I_d(B(\varphi '))) = m-d-1$ and thus $I_d(B(\varphi '))$ is Cohen-Macaulay \cite[5.2]{Ea}. But $x_1, \ldots, x_d$ is a regular sequence on $I_d(B(\varphi '))$, so $(x_1, \ldots, x_d) + I_d(B(\varphi ')) = (x_1, \ldots, x_d) + J$ is Cohen-Macaulay of height $m-1$. Since $J$ is a prime ideal that is homogeneous with respect to $(x_1, \ldots, x_d)$, it follows that $(x_1, \ldots, x_d)+J$ is a prime ideal also.
\end{proof}

  
\begin{lem}\label{colon} \[\overline{(x_1, \ldots, x_d)}^i = \overline{(x_1, \ldots, x_d)}^{(i)} = (\overline{x_d}^i) :_A \overline{K}^{(i)} \mbox{ and } \] \[\overline{K}^{(i)} = (\overline{x_d}^i):_A \overline{(x_1, \ldots, x_d)}^{(i)} \mbox{ for all }i \in \mathbb{N}.\]
\end{lem}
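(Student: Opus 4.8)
The plan is to settle the case $i=1$ first, where most of the substance lies, and then bootstrap to arbitrary $i$ by localizing at the height one primes of $A$. Throughout write $\mathfrak{p}=\overline{(x_1,\ldots,x_d)}$, which by Observation \ref{KisCM} is a Cohen--Macaulay prime of height one in the Cohen--Macaulay domain $A$, and note that $\overline{x_d}$ is a nonzerodivisor.

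For $i=1$ one inclusion is immediate from Cramer's rule. Since $\overline{L}_1=\cdots=\overline{L}_{m-2}=0$ in $A$ we have $\overline{K}=(\overline{x_d})+I_{d-1}(B)A$; if $\delta$ is a $(d-1)$-minor of $B$ and $1\le j\le d-1$, then applying Cramer's rule to $[x_1\cdots x_{d-1}]\cdot B=[\tilde{L}_1\cdots \tilde{L}_{m-2}]$ gives $x_j\delta\in(\tilde{L}_1,\ldots,\tilde{L}_{m-2})\subseteq(L_1,\ldots,L_{m-2})+(x_d)$, so $\mathfrak{p}\cdot\overline{K}\subseteq(\overline{x_d})$, and hence $\mathfrak{p}\subseteq(\overline{x_d}):_A\overline{K}$ and $\overline{K}\subseteq(\overline{x_d}):_A\mathfrak{p}$. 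For the reverse inclusions I would reduce modulo the nonzerodivisor $\overline{x_d}$: modulo $x_d$ one has $L_i\equiv\tilde{L}_i$ and $I_d(B(\varphi'))\subseteq I_{d-1}(B)$, so $K$ reduces to $(\tilde{L}_1,\ldots,\tilde{L}_{m-2})+I_{d-1}(B)$. By Observation \ref{KisCM} this equals $(\tilde{L}_1,\ldots,\tilde{L}_{m-2}):_S(x_1,\ldots,x_{d-1})$, and $K$ is unmixed of height $m-1$ with $(x_1,\ldots,x_{d-1})$ in no minimal prime of $K$; the last point holds because $K+(x_1,\ldots,x_{d-1})=(x_1,\ldots,x_d)+I_{d-1}(B)$ has height $m$, which uses the key input that $I_{d-1}(B)$ attains the maximal possible height $m-d$. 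It follows that $K:_S(x_1,\ldots,x_{d-1})=K$, and since $(x_d)+J\subseteq K$ this yields $(\overline{x_d}):_A\mathfrak{p}\subseteq\overline{K}$, so $\overline{K}=(\overline{x_d}):_A\mathfrak{p}$. As $(\overline{x_d})$ is a complete intersection of height one inside the Cohen--Macaulay ideal $\mathfrak{p}$, linkage then returns $\mathfrak{p}=(\overline{x_d}):_A\bigl((\overline{x_d}):_A\mathfrak{p}\bigr)=(\overline{x_d}):_A\overline{K}$. The same height computation gives $\overline{K}\not\subseteq\mathfrak{p}$ (otherwise $I_{d-1}(B)\subseteq I_d(B(\varphi'))$, forcing $\mathrm{ht}\,I_{d-1}(B)=m-d-1$), so $\mathfrak{p}$ and $\overline{K}$ are geometrically linked: $(\overline{x_d})=\mathfrak{p}\cap\overline{K}$, and $A_\mathfrak{p}$ is a DVR with $\overline{x_d}$ a uniformizer.

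With these facts the identities for general $i$ reduce to bookkeeping with symbolic powers, carried out by localizing at the height one primes $\mathfrak{q}$ of $A$ and using $(\overline{x_d})=\mathfrak{p}\cap\overline{K}$ and that $A_\mathfrak{p}$ is a DVR. One first checks $\mathfrak{p}^{(i)}\cdot\overline{K}^{(i)}\subseteq(\overline{x_d}^{\,i})$ locally: at $\mathfrak{p}$ use the DVR and $\overline{K}^{(i)}_\mathfrak{p}=A_\mathfrak{p}$; at $\mathfrak{q}\in\mathrm{Min}(\overline{K})\setminus\{\mathfrak{p}\}$ use $\overline{K}_\mathfrak{q}=(\overline{x_d})_\mathfrak{q}$; and at all other height one primes $\overline{x_d}$ is a unit. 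Since $(\overline{x_d}^{\,i})$ is unmixed of height one this gives $\mathfrak{p}^{(i)}\subseteq(\overline{x_d}^{\,i}):_A\overline{K}^{(i)}$ and $\overline{K}^{(i)}\subseteq(\overline{x_d}^{\,i}):_A\mathfrak{p}^{(i)}$. By inspecting the primary decomposition of $(\overline{x_d}^{\,i})$ and using these inclusions, $(\overline{x_d}^{\,i}):_A\overline{K}^{(i)}$ is $\mathfrak{p}$-primary and $(\overline{x_d}^{\,i}):_A\mathfrak{p}^{(i)}$ is unmixed of height one; comparing localizations at the height one primes (the first localizes to $\mathfrak{p}^{(i)}_\mathfrak{p}$ at $\mathfrak{p}$; the second localizes to $\overline{K}^{(i)}_\mathfrak{q}$ at each $\mathfrak{q}\in\mathrm{Min}(\overline{K})$) then forces $(\overline{x_d}^{\,i}):_A\overline{K}^{(i)}=\mathfrak{p}^{(i)}$ and $(\overline{x_d}^{\,i}):_A\mathfrak{p}^{(i)}=\overline{K}^{(i)}$. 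This gives the second displayed equality and the last equality of the first display. What remains is $\overline{(x_1,\ldots,x_d)}^{\,i}=\overline{(x_1,\ldots,x_d)}^{(i)}$, i.e.\ that $\mathfrak{p}^i$ is unmixed, equivalently that $A/\mathfrak{p}^i=S/\bigl((x_1,\ldots,x_d)^i+J\bigr)$ is Cohen--Macaulay for all $i$; this I would deduce from the structure of $\mathfrak{p}$ as an $A$-ideal (its presentation coming from $B(\varphi')$, with $I_d(B(\varphi'))=0$ in $A$), again via the maximal height of $I_d(B(\varphi'))$.

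The hard part is entirely in the $i=1$ step together with the inputs that go beyond the formal Cramer's-rule computation: that $I_{d-1}(B)$, a submatrix of the Jacobian dual, has the maximal possible height $m-d$ — this is what makes both $K:_S(x_1,\ldots,x_{d-1})=K$ and the non-containment $\overline{K}\not\subseteq\mathfrak{p}$ work — and the Cohen--Macaulayness of $A/\mathfrak{p}^i$ needed for the equality of ordinary and symbolic powers of $\overline{(x_1,\ldots,x_d)}$. Everything after that is a routine, if lengthy, descent to the height one primes of $A$.
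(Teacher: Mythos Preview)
Your approach has two genuine gaps, and in both places the paper's argument is simpler than the route you chose.

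\textbf{First gap: the height of $I_{d-1}(B)$.} You repeatedly invoke ``the key input that $I_{d-1}(B)$ attains the maximal possible height $m-d$.'' This is not established anywhere in the paper, and it is not a consequence of what has been proved: Observation~\ref{KisCM} only shows that $(\tilde{L}_1,\ldots,\tilde{L}_{m-2})+I_{d-1}(B)$ is Cohen--Macaulay of height $m-2$, not that it is prime, so Theorem~\ref{goup} does not apply to $B$. Without this height, your argument that $(x_1,\ldots,x_{d-1})$ avoids the minimal primes of $K$ (and hence $K:_S(x_1,\ldots,x_{d-1})=K$) does not go through. The paper sidesteps this entirely: to get the reverse inclusion $(\overline{x_d}^{\,i}):_A\overline{K}^{(i)}\subseteq\mathfrak{p}^{(i)}$ it only uses that $\overline{K}\not\subset\mathfrak{p}$, which follows from the much weaker fact $I_{d-1}(B)\neq 0$ (by degree considerations, since $I_{d-1}(B)\subset k[\underline{T}]$ is generated in $T$-degree $d-1$ while $I_d(B(\varphi'))$ is generated in $T$-degree $d$). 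Once $\overline{K}\not\subset\mathfrak{p}$, localize at $\mathfrak{p}$: then $\overline{K}^{(i)}_\mathfrak{p}=A_\mathfrak{p}$, so any $a$ with $a\overline{K}^{(i)}\subset(\overline{x_d}^{\,i})$ lies in $(\overline{x_d}^{\,i})_\mathfrak{p}\subset\mathfrak{p}^{(i)}_\mathfrak{p}$, hence in $\mathfrak{p}^{(i)}$. No linkage is needed, and the argument works for all $i$ at once.

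\textbf{Second gap: ordinary versus symbolic powers of $\mathfrak{p}$.} Your sketch ``this I would deduce from the structure of $\mathfrak{p}$ as an $A$-ideal \ldots\ via the maximal height of $I_d(B(\varphi'))$'' is not a proof, and Cohen--Macaulayness of $A/\mathfrak{p}^i$ for all $i$ is a nontrivial statement. The paper's argument is one line: give $S$ the grading with $\deg x_j=1$, $\deg T_j=0$; then $J$ is homogeneous, $A$ is positively graded with $A_+=\mathfrak{p}$, and $\mathrm{gr}_{\mathfrak{p}}(A)\cong A$ is a domain. Hence $\mathfrak{p}^i$ is $\mathfrak{p}$-primary for all $i$, i.e.\ $\mathfrak{p}^i=\mathfrak{p}^{(i)}$.

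In short, the paper proves both colon identities directly for all $i$ from the single inclusion $\mathfrak{p}\cdot\overline{K}\subset(\overline{x_d})$ together with $\overline{K}\not\subset\mathfrak{p}$, and handles $\mathfrak{p}^i=\mathfrak{p}^{(i)}$ via the associated graded ring; your detour through linkage for $i=1$ and the claimed height of $I_{d-1}(B)$ is both unnecessary and, as written, unjustified.
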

\begin{proof}
Temporarily setting the $x_i$'s degrees to $1$ and the $T_i$'s degrees to $0$, we see that $\mathrm{gr}_{(\overline{\underline{x}})}(A) \cong A$, a domain. Thus $(\overline{\underline{x}})^i = (\overline{\underline{x}})^{(i)}$.


 We now show the second equality.
 
In the ring $A$, $\overline{(x_1, \ldots, x_d)}\overline{K} \subset (\overline{x_d})$. So $\overline{(x_1, \ldots, x_d)}^i\overline{K}^i \subset (\overline{x_d}^i)$. Then localizing at height $1$ primes, we see that $\overline{(x_1, \ldots, x_d)}^{(i)}\overline{K}^{(i)}\subset (\overline{x_d}^i)$. Hence $\overline{(x_1, \ldots, x_d)}^{(i)} \subset (\overline{x_d}^i) :_A \overline{K}^{(i)}$. Expressing $K = (\tilde{L}_1, \ldots, \tilde{L}_{m-2}) + I_{d-1}(B) + (x_d)$ as before and recalling that $(\tilde{L}_1, \ldots, \tilde{L}_{m-2}) + I_{d-1}(B) = (\tilde{L}_1, \ldots, \tilde{L}_{m-2}):_S (x_1, \ldots, x_{d-1})$ has height $m-2$, we must have $0 \neq I_{d-1}(B)\subset k[\underline{T}]$. For, if $I_{d-1}(B)=0$ then $(x_1,\ldots,x_{d-1})$ contains a regular element on $S/(\tilde{L}_1, \ldots, \tilde{L}_{m-2})$ and hence $d-1=\hght (x_1,\ldots,x_{d-1})>\hght (\tilde{L}_1, \ldots, \tilde{L}_{m-2})=m-2$, contradicting $m>d$. Now by degree considerations, $\overline{K} \not\subset \overline{(x_1, \ldots, x_d)}$. Since $\overline{(x_1, \ldots, x_d)}$ is the unique associated prime of $\overline{(x_1, \ldots, x_d)}^{(i)}$, it follows that $(\overline{x_d}^i):_A \overline{K}^{(i)} \subset \overline{(x_1, \ldots, x_d)}^{(i)}$.

 Similarly, $\overline{K}^{(i)} = ({\overline{x_d}}^i) :_A \overline{(x_1, \ldots, x_d)}^{(i)}$.
 \end{proof}
 
 Now define the $A$-ideal $\mathcal{D}= \frac{\bar{g}\overline{K}^{(n)}}{\overline{x_d}^n}$. This is an $A$-ideal by Lemma \ref{colon}, since $g \in (x_1, \ldots, x_d)^n$. Moreover, $\mathcal{D} \subset \overline{\mathcal{A}}$ as $\bar{g} \in \mathcal{A}$, $\overline{x_d} \notin \overline{\mathcal{A}}$, and $\overline{\mathcal{A}}$ is prime.
 
\begin{thm}\label{2ndform} With the assumptions of Setting \ref{origset}, the $A$-ideals $\mathcal{D}$ and $\overline{\mathcal{A}}$ are equal.\end{thm}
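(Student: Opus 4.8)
The plan is to identify $\mathcal{D}$ with a colon ideal and then squeeze $\overline{\mathcal{A}}$ between $\mathcal{D}$ and that colon ideal. Precisely, I would establish the chain $\mathcal{D} \subseteq \overline{\mathcal{A}} \subseteq (\bar g):_A \overline{(\underline{x})}^{\,n} \subseteq \mathcal{D}$, in which the leftmost inclusion is already recorded just before the statement (it uses $\bar g \in \mathcal{A}$, $\overline{x_d} \notin \overline{\mathcal{A}}$, and primeness of $\overline{\mathcal{A}}$). Thus the real work is the middle inclusion together with the rightmost one, the latter being essentially the assertion $\mathcal{D} = (\bar g):_A \overline{(\underline{x})}^{\,n}$.

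For the equality $\mathcal{D} = (\bar g):_A \overline{(\underline{x})}^{\,n}$ I would compute inside the fraction field of the domain $A$ (Observation \ref{KisCM}), using that $\bar g \neq 0$: otherwise $g \in J$, hence $\mathcal{L} \subseteq J$ by definition of $J$, hence $\mathcal{A} = \mathcal{L}:(\underline{x})^\infty \subseteq J:(\underline{x})^\infty = J$ since $J$ is prime and does not contain $(\underline{x})$, forcing the contradiction $m-1 = \mathrm{ht}\,\mathcal{A} \leq \mathrm{ht}\,J = m-2$. From Lemma \ref{colon} I use $\overline{(\underline{x})}^{\,n} = \overline{(\underline{x})}^{(n)} = (\overline{x_d}^{\,n}):_A \overline{K}^{(n)}$ and $\overline{K}^{(n)} = (\overline{x_d}^{\,n}):_A \overline{(\underline{x})}^{\,n}$; the first of these in particular gives $\overline{(\underline{x})}^{\,n}\,\overline{K}^{(n)} \subseteq (\overline{x_d}^{\,n})$. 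For the inclusion $\mathcal{D} \subseteq (\bar g):_A \overline{(\underline{x})}^{\,n}$: if $\bar k \in \overline{K}^{(n)}$ then $(\bar g\bar k/\overline{x_d}^{\,n})\,\overline{(\underline{x})}^{\,n} \subseteq \bar g(\overline{x_d}^{\,n})/\overline{x_d}^{\,n} = (\bar g)$. For the reverse inclusion: if $\bar a\,\overline{(\underline{x})}^{\,n} \subseteq (\bar g)$, then since $\overline{x_d}^{\,n} \in \overline{(\underline{x})}^{\,n}$ we get $\overline{x_d}^{\,n}\bar a = \bar g\bar k$ for some $\bar k \in A$, and cancelling $\bar g$ from $\bar g\bar k\,\overline{(\underline{x})}^{\,n} = \overline{x_d}^{\,n}\bar a\,\overline{(\underline{x})}^{\,n} \subseteq \bar g(\overline{x_d}^{\,n})$ shows $\bar k \in (\overline{x_d}^{\,n}):_A \overline{(\underline{x})}^{\,n} = \overline{K}^{(n)}$, so $\bar a = \bar g\bar k/\overline{x_d}^{\,n} \in \mathcal{D}$.

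The middle inclusion $\overline{\mathcal{A}} \subseteq (\bar g):_A \overline{(\underline{x})}^{\,n}$ is where Theorem \ref{1stform} enters: for $a \in \mathcal{A} = \mathcal{L}:(\underline{x})^n$ one has $a(\underline{x})^n \subseteq \mathcal{L} = (L_1,\ldots,L_{m-2})+(g) \subseteq J+(g)$ because $(L_1,\ldots,L_{m-2}) \subseteq J$; reducing modulo $J$ gives $\bar a\,\overline{(\underline{x})}^{\,n} \subseteq (\bar g)$ in $A$. Combining the three inclusions yields $\mathcal{D} = \overline{\mathcal{A}}$.

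I do not expect a genuine obstacle in the statement itself: the substantive input is already available, namely that $J$ is a Cohen--Macaulay prime of height $m-2$ so that $A$ is a Cohen--Macaulay domain (Observation \ref{KisCM}), the symbolic-power colon identities of Lemma \ref{colon}, and the saturation-stabilization result Theorem \ref{1stform}. The only point needing care is bookkeeping, making sure every division by $\overline{x_d}^{\,n}$ is legitimate, which is precisely why $A$ being a domain and the exact colon relations of Lemma \ref{colon} are invoked. An alternative, closer to \cite{KPU1}, would compare $\mathcal{D}$ and $\overline{\mathcal{A}}$ after localizing at the height-one prime $\overline{\mathcal{A}}$ together with an unmixedness argument, but identifying $\mathcal{D}$ as a colon ideal is cleaner and avoids having to establish unmixedness of $\mathcal{D}$ separately.
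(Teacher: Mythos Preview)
Your proof is correct but follows a genuinely different route from the paper's. You identify $\mathcal{D}$ with the colon ideal $(\bar g):_A\overline{(\underline{x})}^{\,n}$ via Lemma~\ref{colon} and then invoke Theorem~\ref{1stform} to obtain the middle inclusion $\overline{\mathcal{A}}\subseteq(\bar g):_A\overline{(\underline{x})}^{\,n}$, closing the chain. The paper instead argues exactly along the lines you sketch and set aside at the end: it shows $\mathcal{D}$ is height~$1$ unmixed (because $\mathcal{D}\cong\overline{K}^{(n)}$ and, in the Cohen--Macaulay ring $A$, the $S_2$ property for an ideal is equivalent to being height~$1$ unmixed), and then checks $\mathcal{D}_p=\overline{\mathcal{A}}_p$ at each height~$1$ prime $p$ of $A$; for $p\neq(\overline{\underline{x}})$ this reduces to $(\bar g)_p=\overline{\mathcal{A}}_p$ using only $\mathcal{A}=\mathcal{L}:(\underline{x})^\infty$, while for $p=(\overline{\underline{x}})$ one has $\overline{\mathcal{A}}_p=A_p$ and a direct computation gives $\mathcal{D}_p=A_p$. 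The trade-off is this: your argument is cleaner and entirely global, but it imports Theorem~\ref{1stform} as an essential ingredient; the paper's argument is independent of Theorem~\ref{1stform} (it needs only the weaker Observation~\ref{0thform}), at the cost of the unmixedness and localization bookkeeping you wished to avoid.
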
  
  
\begin{proof}
Since $A$ is Cohen-Macaulay, a proper $A$-ideal $\mathfrak{b}$ satisfies the \textit{Serre} condition $S_2$ as an $A$-module if and only if it is unmixed of height one. This follows from the depth lemma applied to the exact sequence $0 \longrightarrow \mathfrak{b} \longrightarrow A \longrightarrow A/\mathfrak{b} \longrightarrow 0$. As the condition $S_2$ is preserved under isomorphism, this shows that for proper $A$-ideals, the property of being height $1$ unmixed is preserved under isomorphism. Thus $\mathcal{D}$ is height one unmixed since it is a proper ideal that is isomorphic to $K^{(n)}$.


As $\mathcal{D} \subset \overline{\mathcal{A}}$, to prove equality it is enough to prove that they are equal locally at associated primes of $\mathcal{D}$, which are of height $1$.

Notice that as $(\overline{\underline{x}})_{(\overline{\underline{x}})} = (\bar{x}_d)_{(\overline{\underline{x}})}$, the only $(\overline{\underline{x}})$-primary ideals of $A$ are the symbolic powers of $(\overline{\underline{x}})$, which are the powers. Then $(g)_{(\overline{\underline{x}})} = (\overline{\underline{x}})_{(\overline{\underline{x}})}^i$ for some $i$.
But then $g \in (\overline{\underline{x}})^i$. From this we see that $i\leq n$.

Now locally at a height $1$ prime $p$ not equal to $(\overline{\underline{x}})$, $\overline{K}_p = (\overline{x_d})_p$ and $\mathcal{D}_p = (\bar{g})_p = \overline{\mathcal{A}}_p$. For $p = (\overline{\underline{x}})$, $\overline{\mathcal{A}}_p = A_p$ and we just need to check that $\mathcal{D}_p = A_p$ as well. But $(\bar{g})_p = (\overline{\underline{x}})_p^i = (\overline{x_d}^i)_p$ for some $i\leq n$ and $\overline{K}_p = A_p$. So $\mathcal{D}_p = (\overline{x_d}^i)A_p/\overline{x_d}^n \supset A_p$.
\end{proof}

Now, using the ring $A$ we show that $n$ is the smallest possible integer for a description as in Theorem \ref{1stform}.

\begin{rmk}\label{nisMinimal} With the assumptions of Setting \ref{origset}, $n$ is the smallest integer so that $\mathcal{A} = \mathcal{L} : (x_1, \ldots, x_d)^n$.\end{rmk}
\begin{proof}
Assume that there is an $i \in \mathbb{N}$ with $i<n$ so that $\mathcal{A} = \mathcal{L} : (x_1, \ldots, x_d)^i$. Then in the ring $A$, $\overline{(x_1, \ldots, x_d)}^i \overline{\mathcal{A}} \subset (\bar{g})$. Localizing at the prime $\overline{(x_1, \ldots, x_d)}$ we obtain ${\overline{(x_1,\ldots,x_d)}^i}_{(\underline{x})} \subset (\bar{g})_{(\underline{x})}$. As $g \in (x_1, \ldots, x_d)^i$, this shows that ${\overline{(x_1,\ldots,x_d)}^i}_{(\underline{x})} = (\bar{g})_{(\underline{x})}$. Similarly ${\overline{(x_1,\ldots,x_d)}^n}_{(\underline{x})} \subset (\bar{g})_{(\underline{x})}$
Thus \[\overline{(x_1, \ldots, x_d)}^i = \overline{(x_1, \ldots, x_d)}^{(i)} = \overline{(x_1, \ldots, x_d)}^{(n)} =  \overline{(x_1, \ldots, x_d)}^n .\]
This is a contradiction.

\end{proof}

\section{Iterated Jacobian dual}
In this section we present an algorithm to extend the Jacobian dual matrix and introduce the notion of Iterated Jacobian dual. The minors of these matrices, help us construct more generators for the defining ideal of the Rees algebra, especially for those which are not of the expected form.

First we define the iterated Jacobian dual of an arbitrary matrix $\phi$, in a Noetherian ring $R$. We then apply the setting of \ref{origset} and present a condition for the equality of the ideal of iterated Jacobian dual and the defining ideal of $\mathcal{R}(I)$.

\

\noindent\textbf{Constructing the Iterated Jacobian dual}:
\\Let $R$ be a Noetherian ring. Consider a  presentation
\[R^s\xrightarrow{\phi}R^m.\]

Assume $I_1(\phi)\subseteq (a_1,\dots, a_r)$. Then there exists an $r\times s$ matrix $B(\phi)$, called a \textit{Jacobian dual} of $\phi$, with linear entries in $R[T_1, \dots, T_m]$ such that the following condition is satisfied
\begin{equation}\label{JacobianDual}
[T_1\cdots T_m]\cdot \phi=[a_1 \cdots a_r]\cdot B(\phi) .
\end{equation}
Though the existence of $B(\phi)$ is clear, it may not be uniquely determined. When $R$ is a polynomial ring, the uniqueness of $B(\phi)$ depends on the linearity of the presentation matrix $\phi$.

Let $\mathcal{L}$ denote the ideal defining the symmetric algebra $\sym(\coker\phi)$.
\begin{definition} \label{def iterated jacobian daul} Set $B_1(\phi)=B(\phi)$ and $\mathcal{L}_1=\mathcal{L}$. Suppose $(B_1(\phi),\mathcal{L}_1), \dots,  \\(B_{i-1}(\phi),\mathcal{L}_{i-1})$ have been inductively constructed such that, for $1\leq j\leq i-1$,  $B_j(\phi)$ are matrices with $r$ rows having homogeneous entries of constant degree along each column in $R[T_1,\dots,T_m]$ and $\mathcal{L}_{j}=(\underline{a}\cdot B_{j}(\phi)),~1\leq j\leq i-1$. 
	
	
	We now construct $(B_{i}(\phi),\mathcal{L}_{i})$. Let
	\begin{equation*}
	\mathcal{L}_{i-1}+(I_r(B_{i-1}(\phi))\cap (\underline{a}))=\mathcal{L}_{i-1}+(u_{1},\dots ,u_{l}) 
	\end{equation*}
	where $u_1,\dots,u_l$ are homogeneous in $R[T_1,\dots,T_m]$. Then there exists a matrix, C, having homogeneous entries of constant degree along each column in $R[T_1,\dots,T_m]$ such that 
	\[[u_{1}\cdots u_{l}]=[a_1\cdots a_r]\cdot C.\]
	Define $B_{i}(\phi)$, an \textit{i-th iterated Jacobian dual} of $\phi$, to be 
	\begin{equation}
 B_{i}(\phi)=[B_{i-1}(\phi)~|~C]
	\end{equation}
	where $|$ represents matrix concatenation. Now set $\mathcal{L}_{i}=(\underline{a}\cdot B_{i}(\phi))$.
\end{definition}
Notice that, by construction, $B_{i-1}(\phi)$ is a submatrix of $B_i(\phi)$ and $\mathcal{L}_{i-1}\subseteq\mathcal{L}_i$.
Supplementing the earlier observation, $B_{i}(\phi)$ may not be uniquely determined. Further, notice that the generating set $(u_{1},\dots ,u_{l})$ need not be unique, leading to different candidates for $B_{i}(\phi)$ of different sizes. Suppose 
\begin{equation}\label{first step of construction}
\mathcal{L}_{i-1}+(I_r(B_{i-1}(\phi))\cap (\underline{a}))=\mathcal{L}_{i-1}+(u_{1},\dots ,u_{l})=\mathcal{L}_{i-1}+(v_{1},\cdots ,v_{t})\end{equation} and suppose $B, B'$ satisfy
\begin{align}
\label{UniquenessofJacobianDual}
\begin{split}
[u_1 \cdots u_l]=[\underline{a}]\cdot C\mbox{~ and~} B=[B_{i-1}(\phi)~|~C]\\
[v_1 \cdots v_t]=[\underline{a}]\cdot C'\mbox{~ and~} B'=[B_{i-1}(\phi)~|~C']\\
\end{split}
\end{align}
For our purposes, we show, in Theorem \ref{Theorem of Uniqueness of Iterated Jacobian Dual}, that $\mathcal{L}+I_r(B)=\mathcal{L}+I_r(B')$ when $\underline{a}$ is a $R$-regular sequence. This will show that the ideal of the iterated Jacobian dual, $\mathcal{L}+I_r(B_{i+1}(\phi))$, depends only on the presentation matrix $\phi$ and the regular sequence $a_1,\dots,a_r$. It should be noted that $r$ should not be ``too big'', otherwise the matrix $B(\phi)$ (and hence $B_i(\phi)$) may have a row of zeros, which would trivialize the construction. 
\begin{rmk}\label{remarksIJD}
	\begin{enumerate}
		\item $\mathcal{L}_1=\mathcal{L}$ is a well defined $R[T_1,\dots,T_m]$-ideal because it is the ideal defining the symmetric algebra $\sym(\coker\phi)$. Assume that $\mathcal{L}_j,~1\leq j\leq i-1$ are well defined ideals. The candidates for $B_{i}(\phi)$, namely $B$ and $B'$, are constructed with the generators, $(u_1,\dots,u_l)$ and $(v_1,\dots,v_t)$ respectively. Now \eqref{first step of construction} guarantees that 
		\begin{equation*}                                  
		(\underline{a}\cdot B)=(\underline{a}\cdot B')
		\end{equation*}
		showing that $\mathcal{L}_{i}$ is a well-defined $R[T_1,\cdots,T_m]$-ideal.
		\item By definition, since $\mathcal{L}\subseteq\mathcal{L}_i$, it is clear that $\mathcal{L}+I_r(B_i(\phi))\subseteq \mathcal{L}_i+I_r(B_i(\phi))$. But notice that 
		\begin{align*}
		\begin{split}
		\mathcal{L}_i\subseteq &(\underline{a}\cdot B_{i-1}(\phi))+(\underline{a}\cdot C)\\
		=&\mathcal{L}_{i-1}+(u_1,\dots,u_l)\\
		\subseteq & \mathcal{L}_{i-1}+ I_r(B_{i-1}(\phi))\\
		\subseteq &\mathcal{L}_{i-1}+ I_r(B_i(\phi))\\
		\end{split}\end{align*}
		Successively, we can show that $\mathcal{L}_i\subseteq \mathcal{L}_{i-1}+I_r(B_i(\phi))\subseteq \mathcal{L}_{i-2}+I_r(B_i(\phi))\subseteq\cdots\subseteq \mathcal{L}+I_r(B_i(\phi))$. Thus $\mathcal{L}+I_r(B_i(\phi))= \mathcal{L}_i+I_r(B_i(\phi))$
		
	\end{enumerate}
\end{rmk}
Making use of \textit{Cramer's rule}, we  first prove a lemma which will be used liberally throughout this section.
\begin{lem}\label{LemmaofMinors}
	Let $R$ be a commutative ring. Let $[a_1 \cdots a_r]$ be a  $1\times r$ matrix and 
	$M$ be a $r\times r-1$ matrix with entries in $R$. Now let $M_t,~1\leq t\leq r$, be the $r-1\times r-1$ submatrix of $M$  obtained by removing the $t$-th row of $M$. Set $m_t=\mathrm{det~}M_t$. Then, in the ring $R/(\underline{a}\cdot M)$
	\begin{equation} \label{eqLemmaofMinors}
	\overline{a_t}\cdot \overline{m_k}=(-1)^{t-k}\overline{a_k}\cdot \overline{m_t},~ 1\leq t\leq r,~1\leq k\leq r
	\end{equation}
\end{lem}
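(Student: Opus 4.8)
The plan is to prove the determinantal identity $\overline{a_t}\,\overline{m_k} = (-1)^{t-k}\overline{a_k}\,\overline{m_t}$ in the quotient ring $R/(\underline{a}\cdot M)$ by exhibiting, for each fixed pair $(t,k)$, an explicit $r\times r$ determinant that vanishes because it has a repeated column, and then expanding it along that column via Laplace expansion. First I would form the $r \times r$ matrix $N$ whose first $r-1$ columns are the columns of $M$ and whose last column is the transpose of $[a_1\ \cdots\ a_r]$. The key point is that the entries of the vector $\underline{a}\cdot M$ are precisely (up to sign) certain $r\times r$ minors: indeed, expanding $N$ along its last column gives $\det N = \sum_{j=1}^r (-1)^{j+r} a_j\, m_j$ (with appropriate sign bookkeeping), but this is not directly what we want. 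The cleaner route is Cramer's rule applied to the square system: consider the $r\times r$ matrix $[M \mid e]$ where $e$ ranges over standard basis vectors, or equivalently work with the adjugate.

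Here is the approach I actually expect to use. Fix $t$ and $k$ with, say, $t \neq k$. Consider the $r \times r$ matrix $P$ obtained from $M$ by appending as a new column the vector $a = (a_1, \ldots, a_r)^{\mathsf T}$, but then replace: observe that the relation $\underline a \cdot M = 0$ in $R/(\underline a \cdot M)$ means each of the $r-1$ columns of $M$ is "orthogonal" to $a$ modulo the ideal, i.e. $\sum_i a_i M_{i,c} \equiv 0$ for each column index $c$. Equivalently, modulo $(\underline a\cdot M)$, the vector $a$ lies in the kernel of the transpose $M^{\mathsf T}$. Since $M$ has $r$ rows and $r-1$ columns, the kernel of $M^{\mathsf T}$ (the left kernel of $M$) is generated over the fraction field by the vector of signed maximal minors $(m_1, -m_2, \ldots, (-1)^{r-1} m_r)$, by the standard exactness of the Koszul-type complex / Cramer's rule for the $(r-1)\times r$ situation. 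This forces $a$ to be a scalar multiple of the signed-minor vector modulo the ideal, which is exactly the asserted proportionality. To make this rigorous without passing to a field, I would instead argue directly with Laplace expansion: for each fixed $c$ (column of $M$) the $r\times r$ matrix having columns $M_{\bullet,1},\ldots,M_{\bullet,r-1}$ together with a duplicate of $M_{\bullet,c}$ has determinant zero; expanding it shows $\sum_j (-1)^{j}(\text{cofactor})\,M_{j,c} = 0$, and one identifies these cofactors with the $m_j$'s.

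Concretely, the step I would carry out is: for fixed $t,k$, form the $r\times r$ matrix $M'$ whose columns are $M_{\bullet,1}, \ldots, M_{\bullet,r-1}$ and, as last column, the standard vector with $a$-entries, then compute $\det M'$ two ways. Cramer's rule on the square system $M' y = (\text{RHS})$ yields $a_t m_k - (-1)^{t-k} a_k m_t = \pm(\text{a linear combination of the entries of } \underline a\cdot M)$, and that linear combination lies in $(\underline a\cdot M)$ by definition, so it is zero in the quotient. The main obstacle — and it is really just bookkeeping rather than a conceptual difficulty — is getting the signs $(-1)^{t-k}$ exactly right through the Laplace expansion, since the positions of the deleted rows $t$ and $k$ shift the cofactor signs relative to one another; I would handle this by writing the $r\times r$ auxiliary determinant with two columns both equal to the $a$-vector (hence determinant $0$), expanding along those two columns simultaneously (a second-order Laplace / Sylvester expansion), and reading off that the only surviving terms are $a_t m_k$ and $a_k m_t$ with the relative sign $(-1)^{t-k}$, all other products $a_i m_j$ being absorbed into $(\underline a\cdot M)$.
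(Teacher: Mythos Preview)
Your instinct to use Cramer's rule and Laplace expansion is exactly what the paper does, but none of the specific constructions you propose actually delivers the pairwise identity. Appending the column $a$ to $M$ to form $M'=[M\mid a]$ and expanding along the last column yields only the single relation $\sum_j (-1)^{j+r}a_j m_j$, not the relation between $a_t m_k$ and $a_k m_t$ for a fixed pair $(t,k)$; the vague ``Cramer's rule on $M'y=\text{RHS}$'' does not name a system that isolates those two indices. Your final suggestion, an $r\times r$ determinant with \emph{two} columns equal to $a$, leaves only $r-2$ columns from $M$, so the complementary minors in a second-order Laplace expansion are $(r-2)\times(r-2)$ minors of $M$, not the $m_j$'s; the terms $a_t m_k$ and $a_k m_t$ simply never appear. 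And the kernel-of-$M^{\mathsf T}$ argument, as you note yourself, is not rigorous over an arbitrary commutative ring.

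The paper's execution isolates the indices $t,k$ from the outset: delete row $k$ of $M$ to obtain the \emph{square} $(r-1)\times(r-1)$ matrix $M_k$, and write $[a_1\cdots\widehat{a_k}\cdots a_r]\cdot M_k = [g_1\cdots g_{r-1}]$. Cramer's rule on this square system gives $a_t\, m_k = \det M_{k_t}$, where $M_{k_t}$ is $M_k$ with the row corresponding to $a_t$ replaced by $[g_1\cdots g_{r-1}]$. Modulo $(\underline a\cdot M)$ one has $\overline{g_i} = -\overline{a_k}\,\overline{b_{ki}}$, so that replaced row becomes $-a_k$ times row $k$ of $M$; pulling out $-a_k$ and performing $|t-k|-1$ adjacent row swaps turns the remaining determinant into $m_t$, giving $\overline{a_t}\,\overline{m_k} = (-1)^{t-k}\overline{a_k}\,\overline{m_t}$. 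The entire argument is three lines once you commit to the submatrix $M_k$; your sketches circle the idea but never select it.
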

\begin{proof}
	Let $M=(b_{ij})$. We start by writing it in the following form
	\begin{equation*}
	[a_1\cdots \hat{a_k}\cdots a_r]\cdot M_k=[g_{1}\cdots g_{r-1}]
	\end{equation*}
	Using \textit{Cramer's Rule}, we see that $a_t\cdot m_k=\mathrm{det~} M_{k_t},~ t\in \{1,\cdots ,\hat{k},\cdots, r\}$ where $M_{k_t}$ is a matrix got from $M_k$, by replacing the $t$-th row by $[g_{1}\cdots g_{r-1}]$. But in the ring $R/(\underline{a}\cdot M)$, 
	\begin{equation*}
	\overline{g_{i}}=-\overline{a_k}\overline{b_{ki}} 
	\end{equation*}
	Thus, in this ring, we have $\overline{a_t}\cdot\overline{m_k}=\overline{\mathrm{det~} M_{k_t}}=-\overline{a_k}\cdot \overline{m''}$, where $m''$ is the determinant of the matrix $M''$ whose rows are equal to that of $M_k$, except for the $t$-th row which is replaced by $[b_{k1}\cdots b_{kr}]$. Also, after $t-k-1$ row transposition of the $t$-th row of $M''$, we get $m''=(-1)^{t-k-1}m_t$, where $m_t$ is as described in the statement of the lemma. Putting all these observations together, we get $\overline{a_t}\cdot \overline{m_k}=-\overline{a_k}\cdot \overline{m''}=-\overline{a_k}(-1)^{t-k-1}\overline{m_t}=(-1)^{t-k}\overline{a_k}\cdot \overline{m_t}$.
\end{proof}

\begin{lem}\label{LemmaofUniquenessofJD}
Let $R$ be a commutative ring and $\underline{a}=a_1,\dots,a_r$ be an $R$-regular sequence. Suppose $B,~B'$ are two matrices with $s$ rows satisfying
\begin{equation}
(\underline{a}\cdot B)=(\underline{a}\cdot B'),
\end{equation}
then $(\underline{a}\cdot B,I_r(B))=(\underline{a}\cdot B',I_r(B'))$
\end{lem}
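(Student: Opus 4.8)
The statement to prove is Lemma~\ref{LemmaofUniquenessofJD}: for an $R$-regular sequence $\underline{a} = a_1, \ldots, a_r$ and two matrices $B, B'$ with $r$ rows satisfying $(\underline{a} \cdot B) = (\underline{a} \cdot B')$, one has $(\underline{a} \cdot B, I_r(B)) = (\underline{a} \cdot B', I_r(B'))$.

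\medskip

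\noindent\textbf{Plan of proof.} By symmetry it suffices to show $I_r(B') \subseteq (\underline{a} \cdot B) + I_r(B)$; combined with the symmetric inclusion and the hypothesis $(\underline{a}\cdot B) = (\underline{a}\cdot B')$ this gives the result. The plan is to work modulo the ideal $(\underline{a}\cdot B) = (\underline{a}\cdot B')$ and show that every maximal minor of $B'$ already lies in $(\underline{a}\cdot B) + I_r(B)$. First I would reduce to the case where $B'$ has exactly $r-1$ columns: an arbitrary $r\times r$ minor of $B'$ is built from $r$ chosen columns, and since $(\underline{a}\cdot B)$ only depends on the full column span, it does no harm to replace $B$ and $B'$ by the appropriate sub/ambient matrices; more carefully, I would argue column by column, showing that adjoining one column $w$ of $B'$ to $B$ does not enlarge $(\underline{a}\cdot B) + I_r(B)$, and then iterating. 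So the heart of the matter is: if $[a_1 \cdots a_r]\cdot B = [a_1\cdots a_r]\cdot B'$ as ideals (equivalently, every column of $\underline{a}\cdot B'$ is an $R$-linear combination of columns of $\underline{a}\cdot B$ and vice versa), then the new $r\times r$ minors created by mixing columns of $B'$ with columns of $B$ lie in $(\underline{a}\cdot B) + I_r(B)$.

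\medskip

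\noindent The key computational tool is Lemma~\ref{LemmaofMinors}: for any $r \times (r-1)$ submatrix $M$ formed from $r-1$ of the available columns, the signed minors $m_t = \det M_t$ satisfy $\overline{a_t}\,\overline{m_k} = (-1)^{t-k}\overline{a_k}\,\overline{m_t}$ in $R/(\underline{a}\cdot M)$, hence also in $R/(\underline{a}\cdot B)$ since $(\underline{a}\cdot M)\subseteq(\underline{a}\cdot B)$. Now suppose $\Delta$ is an $r\times r$ minor of $B'$ using columns $w_1, \ldots, w_r$ of $B'$. Pick any column, say $w_r$; by hypothesis $\underline{a}\cdot w_r \in (\underline{a}\cdot B) + $ (nothing more is needed here actually) $= (\underline{a}\cdot B)$. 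Wait—more precisely $\underline{a}\cdot w_r$ is an $R$-linear combination of the entries $\underline{a}\cdot(\text{columns of }B)$, but that only says $\underline{a}\cdot w_r \in (\underline{a}\cdot B)$, which we already know. The real input is that modulo $(\underline{a}\cdot B)$, expanding $\underline{a}\cdot\Delta$ via Cramer's rule along the submatrix $M = [w_1 | \cdots | w_{r-1}]$ gives $\overline{a_t}\,\overline{\Delta} = \pm\,\overline{m_t}\cdot\overline{(\underline{a}\cdot w_r)}$ where $m_t$ are the $(r-1)\times(r-1)$ minors of $M$; since $\underline{a}\cdot w_r \equiv 0$ modulo $(\underline{a}\cdot B)$ would only help if $w_r$ were a column of $B$, the correct strategy is to first replace columns of $B'$ one at a time by columns of $B$. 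Concretely: if $w_r$ is not a column of $B$, write $\underline{a}\cdot w_r = \sum_j c_j (\underline{a}\cdot b_j)$ with $b_j$ the columns of $B$; then modulo $(\underline{a}\cdot M)$, $\overline{a_t}\,\overline{\Delta} = \pm\,\overline{m_t}\cdot\overline{\sum_j c_j(\underline{a}\cdot b_j)}$, and expanding this back we get that $a_t\Delta$ lies in $(\underline{a}\cdot M) + \sum_j c_j\cdot(\text{minors of }[w_1|\cdots|w_{r-1}|b_j])$; since $\underline{a}$ is a regular sequence and all the ``error'' terms are multiples of the $a_i$, a regular-sequence cancellation (the colon computation $(\underline{a}):a_t$ or the fact that $a_1,\ldots,a_r$ is regular on $R/(\text{lower-degree minors})$ as in Theorem~\ref{goup}) lets us remove the $a_t$ and conclude $\Delta \in (\underline{a}\cdot B) + I_r([w_1|\cdots|w_{r-1}|b_j]\ \text{for all }j)$. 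Iterating this replacement, each column of $B'$ gets swapped for columns of $B$, and after at most $r$ steps every $r\times r$ minor of $B'$ is expressed inside $(\underline{a}\cdot B) + I_r(B)$.

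\medskip

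\noindent\textbf{Main obstacle.} The delicate point is the regular-sequence cancellation: from $a_t\Delta \in (\underline{a}\cdot B) + I_r(B)$ for all $t$, deduce $\Delta \in (\underline{a}\cdot B) + I_r(B)$. This is where regularity of $\underline{a} = a_1,\ldots,a_r$ is essential — the claim would fail for a general sequence. I expect to handle it by noting that $I_r(B)$ lives in a polynomial subring on which $\underline{a}$ stays regular (the entries of the matrices are, in the application, in $k[T_1,\ldots,T_m]$, but in the abstract setting of the lemma one uses instead that $(\underline{a}\cdot B) + I_r(B)$ can be analyzed after a faithfully flat or associated-graded reduction), together with the fact that the minors $m_t$ of a submatrix, the ideal they generate, and the columns $\underline{a}\cdot b_j$ interact through the Cramer-type identities of Lemma~\ref{LemmaofMinors}, so that a single minor $\Delta$ is ``determined up to $(\underline{a})$-torsion'' which vanishes. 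Getting the bookkeeping of signs and of which minors appear at each replacement step straight is the only real labor; the structural idea is simply ``swap columns of $B'$ for columns of $B$ one at a time, using Cramer's rule and regularity of $\underline{a}$ to absorb the correction terms.''
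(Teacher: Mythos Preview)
Your column-swapping outline is on the right track, but the step you flag as the ``main obstacle'' is a genuine gap, and the way you propose to close it does not work. From $a_t\Delta \in (\underline{a}\cdot B)+I_r(B)$ for all $t$ you cannot conclude $\Delta \in (\underline{a}\cdot B)+I_r(B)$: that would require $\underline{a}$ to be regular on $R/\bigl((\underline{a}\cdot B)+I_r(B)\bigr)$, which is neither assumed nor true in general. Your fallback remarks (``$I_r(B)$ lives in a polynomial subring on which $\underline{a}$ stays regular'') import hypotheses from the application (Setting~\ref{origset}) that are absent in the lemma, which is stated over an arbitrary commutative ring.

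The correct way to use regularity of $\underline{a}$ is not to cancel an $a_t$ from a containment, but to control the \emph{syzygies} of $\underline{a}$. Once you write $\underline{a}\cdot w_r = \sum_j c_j(\underline{a}\cdot b_j)$, you have $\underline{a}\cdot\bigl(w_r - \sum_j c_j b_j\bigr)=0$; since $\underline{a}$ is a regular sequence, this forces $w_r - \sum_j c_j b_j$ to lie in the image of the first Koszul differential $\delta$. Hence $\det[M\,|\,w_r]-\sum_j c_j\det[M\,|\,b_j]$ is a sum of minors $\det[M\,|\,a_ke_\ell - a_\ell e_k]$, and these vanish modulo $(\underline{a}\cdot M)\subseteq(\underline{a}\cdot B)$ precisely by Lemma~\ref{LemmaofMinors}. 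With this correction your iteration goes through. The paper packages the same idea more compactly: both $[\delta\,|\,B]$ and $[\delta\,|\,B']$ present the module $E=(\underline{a})/(\underline{a}\cdot B)$, so invariance of Fitting ideals gives $I_r([\delta\,|\,B])=I_r([\delta\,|\,B'])$ in one line; then Lemma~\ref{LemmaofMinors} shows that adjoining the Koszul block $\delta$ does not change $\mathcal{L}+I_r(-)$, yielding $\mathcal{L}+I_r(B)=\mathcal{L}+I_r(B')$ without any iteration.
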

\begin{proof}
Let $\mathcal{L}=(\underline{a}\cdot B)=(\underline{a}\cdot B')$.

Let $E=(\underline{a})/(\underline{a}\cdot B)$ and consider the free presentation
\begin{equation*}
F_1\xrightarrow{[\delta~|~B]} F_0\rightarrow E\rightarrow 0
\end{equation*}
where $\delta$ represents first differential of the Koszul complex $\mathcal{K}$ of the $R$-regular sequence $\underline{a}$. Notice that 
\begin{equation}\label{FittingIdeal}
I_r([\delta~|~B])=\mathrm{Fitt}_0(E)=I_r([\delta~|~B'])
\end{equation}
as $(\underline{a}\cdot B)=(\underline{a}\cdot B')$ and the Fitting ideals do not depend on the presentation matrix.

Now $(\underline{a}\cdot [\delta~|~B])=(\underline{a}\cdot B)=\mathcal{L}$ because $\underline{a}\cdot \delta =0$. It suffices to show that 
\begin{equation}\label{equalityofkoszulandB}
\mathcal{L}+I_r([\delta~|~B])\subseteq \mathcal{L}+I_r(B)
\end{equation}
 as this would imply, using \eqref{FittingIdeal}, that \[\mathcal{L}+I_r(B)=\mathcal{L}+I_r([\delta~|~B])=\mathcal{L}+I_r([\delta~|~B'])=\mathcal{L}+I_r(B')\]

Now to prove \eqref{equalityofkoszulandB}, it is enough show to that $\overline{I_r([\delta~|~B])}\subseteq \overline{I_r(B)}$ in the ring $\overline{R}=R/\mathcal{L}$. Since $\delta$ is the first Koszul differential, we may assume the columns of $\delta$ are of the form $a_je_k-a_ke_j,~1\leq j,k\leq s$, where $\{e_j\}$ form a basis of $R^r$.

Now  any element of $I_r([\delta~|~B])$ involving a column of $\delta$ is of the form $\det [\delta'~|~M]$ where $M$ is a $s\times s-1$ submatrix of $[\delta~|~B]$ and $\delta'$ is a column of $\delta$. Then $\det [\delta'~|~M]$ is of the form 
\begin{equation}\label{equatioOnLinearCombinationOfMinors}
(-1)^{k+1}(a_jm_k-(-1)^{j-k}a_km_j)
\end{equation}
where $m_t$ is the determinant of the submatrix of $M$ obtained by removing the $t$-th row of $M$. Now in the ring $R/\mathcal{L}$, using Lemma \ref{LemmaofMinors}, we see that elements of the form \eqref{equatioOnLinearCombinationOfMinors} are zero. Thus $\overline{I_r([\delta~|~B])}\subseteq \overline{I_r(B)}$ in the ring $\overline{R}$ and hence $\mathcal{L}+I_r([\delta~|~B])= \mathcal{L}+I_r(B)$.
\end{proof}
	
Now using the lemma proved above, we show the uniqueness of the ideal of iterated Jacobian dual $\mathcal{L}+I_r(B_i(\phi))$.

\begin{thm}\label{Theorem of Uniqueness of Iterated Jacobian Dual}
	Let $R$ be a Noetherian ring and $\phi$ be a $m\times s$ matrix with entries in $R$. Suppose $I_1(\phi)\subseteq (a_1,\dots,a_r)$ and $a_1,\dots,a_r$ is a regular sequence.  Then the ideal $\mathcal{L}+I_r(B_i(\phi))$ of $R[T_1,\dots,T_m]$ is uniquely determined by the matrix $\phi$ and the regular sequence $a_1,\dots,a_r$.
\end{thm}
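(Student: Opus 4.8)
The plan is to argue by strong induction on $i$, proving simultaneously two statements: (i) the ideal $\mathcal{L}_i=(\underline{a}\cdot B_i(\phi))$ is independent of every choice made in the construction of $B_i(\phi)$ (including the choice of $B_{i-1}(\phi)$ itself), and (ii) the ideal $\mathcal{L}+I_r(B_i(\phi))$ is likewise independent of these choices. Statement (i) is essentially Remark \ref{remarksIJD}(1), but only after one isolates the genuinely delicate point: the matrix $B_{i-1}(\phi)$ need not be unique, so the inductive hypothesis must be phrased in terms of the canonical \emph{ideals} $\mathcal{L}_{i-1}$ and $\mathcal{L}+I_r(B_{i-1}(\phi))$ rather than in terms of matrices. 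I would carry (i) and (ii) up the induction together.

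For the base case $i=1$: any two Jacobian duals $B,B'$ of $\phi$ satisfy \eqref{JacobianDual}, so $(\underline{a}\cdot B)=([T_1\cdots T_m]\cdot\phi)=(\underline{a}\cdot B')$, and this common ideal is exactly $\mathcal{L}_1=\mathcal{L}$, the defining ideal of $\sym(\coker\phi)$, which is canonical. Since $B$ and $B'$ both have $r$ rows and $(\underline{a}\cdot B)=(\underline{a}\cdot B')$, Lemma \ref{LemmaofUniquenessofJD} (applied over $R[T_1,\dots,T_m]$, where $a_1,\dots,a_r$ is still a regular sequence) gives $(\underline{a}\cdot B,I_r(B))=(\underline{a}\cdot B',I_r(B'))$, i.e. $\mathcal{L}+I_r(B_1(\phi))$ is canonical.

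For the inductive step, assume (i) and (ii) for all indices smaller than $i$. The first task is to show that the ideal $\mathfrak{u}:=\mathcal{L}_{i-1}+\bigl(I_r(B_{i-1}(\phi))\cap(\underline{a})\bigr)$ used in Definition \ref{def iterated jacobian daul} does not depend on which $B_{i-1}(\phi)$ was chosen. Because $\mathcal{L}_{i-1}=(\underline{a}\cdot B_{i-1}(\phi))\subseteq(\underline{a})$, the modular law yields $\mathfrak{u}=\bigl(\mathcal{L}_{i-1}+I_r(B_{i-1}(\phi))\bigr)\cap(\underline{a})$, and by Remark \ref{remarksIJD}(2) one has $\mathcal{L}_{i-1}+I_r(B_{i-1}(\phi))=\mathcal{L}+I_r(B_{i-1}(\phi))$; the induction hypothesis makes this last ideal canonical, hence so is $\mathfrak{u}$. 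Now for any choice of homogeneous $u_1,\dots,u_l$ with $\mathcal{L}_{i-1}+(u_1,\dots,u_l)=\mathfrak{u}$ and any matrix $C$ with $[u_1\cdots u_l]=\underline{a}\cdot C$, one computes $\mathcal{L}_i=(\underline{a}\cdot[B_{i-1}(\phi)\mid C])=\mathcal{L}_{i-1}+(u_1,\dots,u_l)=\mathfrak{u}$, which proves (i) for $i$.

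Statement (ii) for $i$ now follows as in the base case: if $B_i(\phi)$ and $B_i'(\phi)$ are any two outputs of the construction, both have $r$ rows and, by (i), $(\underline{a}\cdot B_i(\phi))=\mathcal{L}_i=(\underline{a}\cdot B_i'(\phi))$; Lemma \ref{LemmaofUniquenessofJD} then gives $\mathcal{L}_i+I_r(B_i(\phi))=\mathcal{L}_i+I_r(B_i'(\phi))$, and Remark \ref{remarksIJD}(2) rewrites the two sides as $\mathcal{L}+I_r(B_i(\phi))$ and $\mathcal{L}+I_r(B_i'(\phi))$, completing the induction. I expect the only real obstacle to be the bookkeeping forced by the non-uniqueness of the matrices $B_j(\phi)$ themselves; once one observes that Remark \ref{remarksIJD}(2) together with the modular law lets every occurrence of $I_r(B_{i-1}(\phi))$ be replaced by the canonical object $\bigl(\mathcal{L}+I_r(B_{i-1}(\phi))\bigr)\cap(\underline{a})$, the argument collapses to repeated application of Lemma \ref{LemmaofUniquenessofJD}.
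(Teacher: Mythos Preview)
Your proof is correct and follows the same overall route as the paper: induction on $i$, with the inductive step reducing to Lemma~\ref{LemmaofUniquenessofJD} after rewriting $\mathcal{L}+I_r(B_i(\phi))$ as $\mathcal{L}_i+I_r(B_i(\phi))$ via Remark~\ref{remarksIJD}(2).

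Your argument is in fact more complete than the paper's on one point. The paper compares two candidates $B,B'$ for $B_i(\phi)$ built from a \emph{fixed} $B_{i-1}(\phi)$ (this is how equation~\eqref{UniquenessofJacobianDual} is set up), and appeals to Remark~\ref{remarksIJD}(1) for the well-definedness of $\mathcal{L}_i$; but that remark likewise only varies the generating set $(u_j)$ versus $(v_j)$ while holding $B_{i-1}(\phi)$ fixed. Your modular-law identity
\[
\mathcal{L}_{i-1}+\bigl(I_r(B_{i-1}(\phi))\cap(\underline{a})\bigr)=\bigl(\mathcal{L}_{i-1}+I_r(B_{i-1}(\phi))\bigr)\cap(\underline{a})=\bigl(\mathcal{L}+I_r(B_{i-1}(\phi))\bigr)\cap(\underline{a})
\]
makes explicit why the input to stage $i$ is canonical even when the matrix $B_{i-1}(\phi)$ itself varies, and hence why $\mathcal{L}_i$ is independent of choices made at \emph{all} earlier stages. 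This is exactly the bookkeeping point you anticipated, and your resolution of it is clean.
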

\begin{proof} Since the construction of the iterated Jacobian dual is inductive, we prove this result using the principle of mathematical induction. Suppose $B_1,B_2$ are two candidates for $B_1(\phi)=B(\phi)$. Using Lemma \ref{LemmaofUniquenessofJD}, we see that $\mathcal{L}+I_r(B_1(\phi))$ is a well defined ideal, proving the initial step of the induction hypothesis. Now suppose that $\mathcal{L}+I_r(B_j(\phi)),~1\leq j\leq i-1$ are well defined ideals. Now, if $B$, $B'$ are two matrices which satisfies \eqref{UniquenessofJacobianDual}, then we show that 
	\begin{equation*}
	\mathcal{L}+I_r(B)=\mathcal{L}+I_r(B')
	\end{equation*}
	We first notice, by Remark \ref{remarksIJD}, $\mathcal{L}+I_r(B)=\mathcal{L}_i+I_r(B)$ and $\mathcal{L}+I_r(B')=\mathcal{L}_i+I_r(B')$. So its enough to show $\mathcal{L}_i+I_r(B)=\mathcal{L}_i+I_r(B')$. Since $\mathcal{L}_i=(\underline{a}\cdot B)=(\underline{a}\cdot B')$, we now use Lemma \ref{LemmaofUniquenessofJD}, to show the result.
\end{proof}

Now, since $\mathcal{L}+I_r(B_i(\phi))\subseteq \mathcal{L}+I_r(B_{i+1}(\phi))$ and $R[T_1,\cdots,T_m]$ is Noetherian, the procedure stops after a certain number of iterations. Notice that, when $R$ is a polynomial ring and $\phi$ is linear, the procedure stops after the first iteration.

Using Cramer's Rule, we can see that $\mathcal{L}+I_r(B_1(\phi)) \subseteq (\mathcal{L}:(\underline{a}))$,  and hence \[\mathcal{L}+I_r(B_i(\phi)) \subseteq (\mathcal{L}:(\underline{a})^i)\] But its still unclear when the two ideals are equal or if their respective index of stabilizations are related.

\subsection{Ideals of Codimension two}

Let $R=k[x_1,\dots,x_d]$ be a polynomial ring and $I$ be a grade 2 perfect ideal satisfying the $G_d$ condition. Let $\varphi$ be the presentation matrix of $I$ and $\mu(I)=m>d$. If $\varphi$ is linear, then the defining ideal of the Rees algebra $\mathcal{R}(I)$ equals the \textit{expected form}. The expected form of the defining ideal of the Rees algebra is $\mathcal{L}+I_d(B(\varphi))$ (see \cite{MU}). When $\varphi$ is not linear, its interesting to study when the defining ideal of $\mathcal{R}(I)$ satisfy $\mathcal{A}=\mathcal{L}+I_d(B_i(\varphi))$. Such a form of the defining equations is easier to compute and has advantages, when computing the invariants such as relation type, regularity etc.

In \cite{M2}, a condition is given as to when the defining ideal of $\mathcal{R}(I)$ equals the expected form. An analogous condition is presented below for the ideal of iterated Jacobian dual.
\begin{rmk}
	Let $R=k[x_1,\dots x_d]$ be a polynomial ring with the homogeneous maximal ideal $\mathfrak{m}$ and $I$ be a grade 2 perfect ideal with presentation matrix $\varphi$. Assume $I$ satisfies the $G_d$ condition and let $I_1(\varphi)\subseteq (a_1,\dots,a_r)$, where $\underline{a}$ is a regular sequence. If $\hght (I_r(B_n(\varphi))+\mathfrak{m})/\mathfrak{m}R[T_1,\dots,T_m]\geq m-d$ and $\mathcal{L}+I_r(B_n(\varphi))$ is unmixed, then $\mathcal{A}=\mathcal{L}+I_r(B_n(\varphi))$
\end{rmk}
The proof of the above remark is identical to the one presented in \cite{M2}. The remark shows advantages in the feasibility of the bounds for \\$\hght (I_r(B_n(\varphi))+\mathfrak{m})/\mathfrak{m}R[T_1,\dots,T_m]$, but the unmixed condition is strong for it to be of practical use.

We now put efforts into finding a condition for the equality of the defining ideal of $\mathcal{R}(I)$ and the ideal of iterated Jacobian dual, under setting \ref{origset}. Looking at the generating set presented in \cite[3.6]{KPU1}, for $d=2$, it is clear that the defining ideal of $\mathcal{R}(I)$ and the ideal of iterated Jacobian duals are not always equal. A search for a condition, led us to Corollary \ref{condition of equality of ordinary and symbolic powers of K}.
\begin{rmkk}
Notice that when $\varphi$ is almost linear, $I_1(\varphi')\supset I_2(\varphi)\supset I_{m-d+1}(\varphi)$. Recall that $\varphi'$ is obtained from $\varphi$ by removing the last column. Thus the $G_d$ condition forces, $\gr I_{m-d+1}(\varphi)\geq d$ (\cite[6.6]{HSV2}), which shows  $I_1(\varphi)=(x_1,\dots,x_d)$.\end{rmkk}
\begin{thm}\label{Iterated Jacobian Dual in the almost linearly presented case}
	Let $A,K$ be as defined in Notation \ref{2ndnot}. Then in the setting of \ref{origset}, one has  $\frac{\overline{g}\overline{K}^n}{\overline{x_d}^n}\subseteq \overline{\mathcal{L}+I_d(B_n(\varphi))}$ in the ring $A$.
\end{thm}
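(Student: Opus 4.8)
The plan is to exhibit the generators of the $A$-ideal $\frac{\overline{g}\,\overline{K}^{\,n}}{\overline{x_d}^{\,n}}$ as maximal minors of the iterated Jacobian duals $B_1(\varphi),\dots,B_n(\varphi)$, via repeated applications of Cramer's rule. Throughout I choose $B(\varphi)=[\,B(\varphi')\mid c_1\,]$ with $[x_1\cdots x_d]\cdot c_1=g$, so that $c_1$ is the last column of $B_1(\varphi)=B(\varphi)$ and has $(\underline{x})$-degree $n-1$; by Theorem \ref{Theorem of Uniqueness of Iterated Jacobian Dual} such choices do not affect the ideal $\mathcal{L}+I_d(B_n(\varphi))$.

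First I would do the bookkeeping in $A$. There $\overline{L_i}=0$, hence $\overline{K}=(\overline{x_d})+\overline{I_{d-1}(B)}$; using $\overline{(x_1,\dots,x_d)}\,\overline{K}\subseteq(\overline{x_d})$ from Lemma \ref{colon} together with $g\in(x_1,\dots,x_d)^n$, one checks that $\overline{g}\,\overline{K}^{\,n}\subseteq(\overline{x_d}^{\,n})$ and that the ideal $\frac{\overline{g}\,\overline{K}^{\,n}}{\overline{x_d}^{\,n}}$ is generated by $\overline{g}$ (the case $b=0$) together with the elements $\dfrac{\overline{g}\,\overline{\delta_1}\cdots\overline{\delta_b}}{\overline{x_d}^{\,b}}$, for $1\le b\le n$, as $\delta_1,\dots,\delta_b$ range over the maximal minors of $B$. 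Since $g\in\mathcal{L}$, the generator $\overline{g}$ lies in $\overline{\mathcal{L}}\subseteq\overline{\mathcal{L}+I_d(B_n(\varphi))}$, so it remains to handle the generators with $b\ge 1$.

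Fix maximal minors $\delta_1,\dots,\delta_b$ of $B$, with $\delta_j$ the determinant of the submatrix of $B$ on a $(d-1)$-subset $J_j$ of its columns. I would build a chain $M^{(0)}:=g,\ M^{(1)},\dots,M^{(b)}$ together with auxiliary columns $\tilde{\gamma}^{(0)},\dots,\tilde{\gamma}^{(b-1)}$: put $\tilde{\gamma}^{(0)}:=c_1$, and, given an $S$-combination $\tilde{\gamma}^{(j-1)}$ of the columns of $B_j(\varphi)$ with $[x_1\cdots x_d]\cdot\tilde{\gamma}^{(j-1)}=M^{(j-1)}$, set
\[
M^{(j)}:=\det\!\bigl[\,\mathrm{col}_{J_j}(B(\varphi'))\ \big|\ \tilde{\gamma}^{(j-1)}\,\bigr].
\]
Expanding this determinant along its last column shows $M^{(j)}\in I_d(B_j(\varphi))$, and a degree count gives that $M^{(j)}$ is bihomogeneous of $(\underline{x})$-degree $n-j$. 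Hence for $1\le j\le b-1$ (recall $b\le n$) the element $M^{(j)}$ has positive $(\underline{x})$-degree, so $M^{(j)}\in I_d(B_j(\varphi))\cap(\underline{x})\subseteq\mathcal{L}_{j+1}=(\,[x_1\cdots x_d]\cdot B_{j+1}(\varphi)\,)$, which produces the required $\tilde{\gamma}^{(j)}$; this is precisely where the integer $n$ enters, since one can carry out this division by $(\underline{x})$ only $n$ times. Now applying Cramer's rule to the $d\times d$ matrix $[\,\mathrm{col}_{J_j}(B(\varphi'))\mid\tilde{\gamma}^{(j-1)}\,]$ with left multiplier $[x_1\cdots x_d]$, reducing modulo $J$ (so that $\overline{L_i}=0$), and Laplace-expanding along the bottom row — whose only surviving entry is $\overline{M^{(j-1)}}$ and whose cofactor is $\overline{\delta_j}$ — yields
\[
x_d\,\overline{M^{(j)}}=\overline{M^{(j-1)}}\,\overline{\delta_j}\qquad\text{in }A,\qquad 1\le j\le b.
\]
Telescoping these identities gives $\overline{x_d}^{\,b}\,\overline{M^{(b)}}=\overline{g}\,\overline{\delta_1}\cdots\overline{\delta_b}$; since $\overline{x_d}$ is a nonzerodivisor on the domain $A$ (Observation \ref{KisCM}) we may divide and obtain
\[
\frac{\overline{g}\,\overline{\delta_1}\cdots\overline{\delta_b}}{\overline{x_d}^{\,b}}=\overline{M^{(b)}}\in\overline{I_d(B_b(\varphi))}\subseteq\overline{I_d(B_n(\varphi))}\subseteq\overline{\mathcal{L}+I_d(B_n(\varphi))}.
\]
Together with the case $b=0$, this places every generator of $\frac{\overline{g}\,\overline{K}^{\,n}}{\overline{x_d}^{\,n}}$ inside $\overline{\mathcal{L}+I_d(B_n(\varphi))}$, proving the claim.

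I expect the main difficulty to be carrying out the chain construction rigorously inside the non-canonical inductive definition of the $B_j(\varphi)$: one must verify that each $M^{(j)}$ of positive $(\underline{x})$-degree is genuinely absorbed into $\mathcal{L}_{j+1}=([x_1\cdots x_d]\cdot B_{j+1}(\varphi))$, so that $\tilde{\gamma}^{(j)}$ really is an $S$-combination of columns of $B_{j+1}(\varphi)$ and hence $M^{(j+1)}$ really lies in $I_d(B_{j+1}(\varphi))$, and one must keep the homogeneity and degree bookkeeping straight so that the induction terminates after $n$ steps. By Theorem \ref{Theorem of Uniqueness of Iterated Jacobian Dual} the convenient choices made along the way do not change the target ideal, so this causes no harm. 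The remaining ingredients — Cramer's rule and Laplace expansion, the description $\overline{K}=(\overline{x_d})+\overline{I_{d-1}(B)}$ together with $\overline{(x_1,\dots,x_d)}\,\overline{K}\subseteq(\overline{x_d})$, and the fact that $A$ is a domain — are routine or already established in Observation \ref{KisCM} and Lemma \ref{colon}.
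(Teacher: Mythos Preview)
Your proposal is correct and follows essentially the same route as the paper's proof: both arguments climb from $\overline{g}$ to $\dfrac{\overline{g}\,\overline{\delta_1}\cdots\overline{\delta_b}}{\overline{x_d}^{\,b}}$ one factor at a time, using Cramer's rule (the paper's Lemma~\ref{LemmaofMinors}) to produce the next $d\times d$ minor and the inductive definition of $B_{j+1}(\varphi)$ to absorb $M^{(j)}\in I_d(B_j(\varphi))\cap(\underline{x})$ into $\mathcal{L}_{j+1}$. The only organizational difference is that the paper phrases this as an induction proving $D_i\subseteq D_i'$ and, at each step, performs a small case analysis on whether the previously produced element $w'$ lies in $\mathcal{L}$, is pure in the $\underline{T}$-variables (hence already in $J$), or lies in $I_d(B_{i-1}(\varphi))\cap(\underline{x})$; you instead build the chain $M^{(0)},\dots,M^{(b)}$ with controlled bidegree $(\underline{x})$-degree $n-j$ from the outset, which forces $M^{(j)}\in(\underline{x})$ for $j<b\le n$ and sidesteps that case split. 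This is a cosmetic simplification rather than a genuinely different method.
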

\begin{proof}
	It is clear that $\frac{\overline{g}\overline{K}^i}{\overline{x_d}^i}\subseteq \frac{\overline{g}\overline{K}^{i+1}}{\overline{x_d}^{i+1}}$ and $\overline{\mathcal{L}+I_d(B_i(\varphi))}\subseteq \overline{\mathcal{L}+I_d(B_{i+1}(\varphi))}$. Write $D_i=\frac{\overline{g}\overline{K}^i}{\overline{x_d}^i}$ and $D_i'=\overline{\mathcal{L}+I_d(B_i(\varphi))}$.
	
	In Notation \ref{2ndnot}, we had defined $K=(\tilde{L}_1,\cdots,\tilde{L}_{m-2},I_{d-1}(B),x_d)$. Now let $B(\varphi)=(b_{ij}),~1\leq i\leq d,~1\leq j\leq m-1$. As in Notation \ref{2ndnot}, $B$ is a submatrix of $B(\varphi')$. 
	
	We prove the containment $D_i\subseteq D'_i,~1\leq i\leq n$, by induction. Suppose $i=1$. As $\overline{\tilde{L}_i}\in (\overline{x_d})$ in the ring $A$, it is clear that $\frac{\overline{g}\overline{\tilde{L}_i}}{\overline{x_d}}\in (\overline{g})\subseteq \overline{\mathcal{L}}$. Now let $w$ be a $d-1\times d-1$ minor of $B$. For ease of notation, assume that $w$ is the determinant of the submatrix of $B$ consisting of the first $d-1$ rows and the first $d-1$ columns of $B$. Consider $M$, a submatrix of $B(\varphi)$ consisting of the first $d$ rows and whose column indices belong to the set $\{1,\dots,d-1,m-1\}$. Using Cramer's rule, we have $\overline{g}\cdot \overline{w}=\overline{\mathrm{det}(M)}\cdot \overline{x_d}$ in the ring $A$. Thus we have $\frac{\overline{g}\overline{w}}{\overline{x_d}}=\overline{\det(M)}\in \overline{I_d(B_1(\varphi))}$ proving the initial step of induction.
		
	
	Now suppose that the result is true for $1\leq i<n$. Consider $\frac{\overline{g}\overline{w_1}\cdots \overline{w_n}}{\overline{x_d}^n}\in D_n$. We show that $\frac{\overline{g}\overline{w_1}\cdots \overline{w_n}}{\overline{x_d}^n}\in D'_n$.
	
	By induction hypothesis, we have $\frac{\overline{g}\overline{w_1}\cdots \overline{w_{n-1}}}{\overline{x_d}^{n-1}}=\overline{w'}\in D_{n-1}'$. Thus $\frac{\overline{g}\overline{w_1}\cdots \overline{w_n}}{\overline{x_d}^n}=\frac{\overline{w'}\overline{w_n}}{\overline{x_d}}$. If $w'\in\mathcal{L}$, then $\overline{w'}=0$ or $\overline{w'}=\overline{g}$ in the ring $A$. Thus by induction hypothesis, $\frac{\overline{w'}\overline{w_n}}{\overline{x_d}}\in D_1\subseteq D_1'\subseteq D_n'$. If $w'\in I_d(B_{n-1}(\varphi))$ and is purely in the $\underline{T}$-variables, then $w'\in I_d(B(\varphi'))$ and in this case,  $\frac{\overline{w'}\overline{w_n}}{\overline{x_d}}=0$ (recall that $A$ is a domain  and $n\geq 2$). 
	
	Therefore, assume $w'\in I_{d}(B_{n-1}(\varphi))\cap (x_1,\dots, x_d)=(u_{1},\dots,u_{l})$. Now it is enough to show that $\frac{\overline{u_p}\overline{w_n}}{\overline{x_d}}\in D'_n$, $1\leq p\leq l$. So, let $w'=u_{p}$ for some $p\in\{1,\cdots,l \}$
	
	Rewrite 
	\begin{equation}\label{w'decomp}
	w'=\sum\limits_{k=1}^dx_kw'_k \mbox{ for some }w'_k\in S.
	\end{equation}
	
	Now, if $\overline{w_n}\in (\overline{x_d})\subseteq K$, then $\frac{\overline{w'}\overline{w_n}}{\overline{x_d}}=\overline{w'}\in D'_{n-1}\subseteq D'_n$. Thus assume that $w_n\in I_{d-1}(B)$. Now  $$\frac{\overline{w'}\overline{w_n}}{\overline{x_d}}=\frac{\sum\limits_{k=1}^d\overline{x_k}\overline{w'_k}\overline{w_n}}{\overline{x_d}}.$$ For ease of notation assume that $w_n$ is the determinant of the submatrix consisting of the first $d-1$ rows and the first $d-1$ columns of $B$. Now let $M$ be a $d\times d-1$ submatrix consisting of the first $d$ rows and the first $d-1$ columns of $B(\phi)$. Hence in the ring $A$, using Lemma \ref{LemmaofMinors}, we have $\overline{x_k}\overline{w_n}=(-1)^{k-d}\overline{x_d}\overline{w_{n_k}}$ where $\overline{w_{n_k}}=\overline{\det M_k}$  and $M_k$ is the submatrix of $M$ obtained by removing the $k$-th row. Thus, 
	\begin{equation*}
	\frac{\overline{w'}\overline{w_n}}{\overline{x_d}}=\frac{\sum\limits_{k=1}^d(-1)^{k-d}\overline{x_d}\overline{w_{n_k}}\overline{w'_k}}{\overline{x_d}}=\sum_{k=1}\limits^d(-1)^{k-d}\overline{w_{n_k}}\overline{w'_k}
	\end{equation*}
	which is the determinant of the $d\times d$ matrix $[(b_{ij})~|~(w'_k)],~ 1\leq i\leq d,~1\leq j\leq d-1,~1\leq k\leq d$ an element of $I_d(B_n(\varphi))\subseteq D'_n$ (the non-unique decomposition in  \eqref{w'decomp} is taken care of by Theorem \ref{Theorem of Uniqueness of Iterated Jacobian Dual}).
\end{proof}
\begin{cor}\label{condition of equality of ordinary and symbolic powers of K}
	In the setting of Theorem \ref{2ndform}, if $\overline{K}^{(n)}=\overline{K}^n$, then the defining equation of the Rees algebra satisfy  $\mathcal{A}=\mathcal{L}+I_d(B_n(\varphi))$.
\end{cor}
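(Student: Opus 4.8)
The plan is to transport the problem into the ring $A = S/J$ of Notation \ref{2ndnot}, apply Theorems \ref{2ndform} and \ref{Iterated Jacobian Dual in the almost linearly presented case}, and lift the resulting equality back to $S$. First I would record the chain of inclusions of $S$-ideals
\[
J \ \subseteq\ \mathcal{L} + I_d(B_n(\varphi)) \ \subseteq\ \mathcal{A}.
\]
For the first inclusion, note $(L_1, \ldots, L_{m-2}) \subseteq \mathcal{L}$, and that $B(\varphi')$ may be chosen to be the submatrix of $B(\varphi) = B_1(\varphi)$ consisting of the columns coming from $\varphi'$; since $B_1(\varphi)$ is, by construction, a submatrix of $B_n(\varphi)$, we get $I_d(B(\varphi')) \subseteq I_d(B_n(\varphi))$, whence $J = (L_1, \ldots, L_{m-2}) + I_d(B(\varphi')) \subseteq \mathcal{L} + I_d(B_n(\varphi))$. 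The second inclusion is $\mathcal{L} + I_d(B_n(\varphi)) \subseteq \mathcal{L} : (x_1, \ldots, x_d)^n = \mathcal{A}$, using the Cramer's-rule observation at the end of this section (with $\underline{a} = x_1, \ldots, x_d$, which is a regular sequence and satisfies $I_1(\varphi) = (x_1,\ldots,x_d)$) together with Theorem \ref{1stform}.

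Since $J$ is contained in both $\mathcal{L} + I_d(B_n(\varphi))$ and $\mathcal{A}$, the bijection between ideals of $S$ containing $J$ and ideals of $A$ reduces the claimed equality $\mathcal{A} = \mathcal{L} + I_d(B_n(\varphi))$ to the equality $\overline{\mathcal{A}} = \overline{\mathcal{L} + I_d(B_n(\varphi))}$ of $A$-ideals. The inclusion $\overline{\mathcal{L} + I_d(B_n(\varphi))} \subseteq \overline{\mathcal{A}}$ is immediate from the chain above. For the reverse inclusion I would compute, inside $A$,
\[
\overline{\mathcal{A}} \ =\ \mathcal{D} \ =\ \frac{\bar{g}\,\overline{K}^{(n)}}{\overline{x_d}^{\,n}} \ =\ \frac{\bar{g}\,\overline{K}^{\,n}}{\overline{x_d}^{\,n}} \ \subseteq\ \overline{\mathcal{L} + I_d(B_n(\varphi))},
\]
where the first equality is Theorem \ref{2ndform} (an honest equality of $A$-ideals), the third equality is the hypothesis $\overline{K}^{(n)} = \overline{K}^{\,n}$, and the final inclusion is Theorem \ref{Iterated Jacobian Dual in the almost linearly presented case}. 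Combining the two inclusions gives $\overline{\mathcal{A}} = \overline{\mathcal{L} + I_d(B_n(\varphi))}$, and lifting back to $S$ completes the proof.

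The corollary is short because the substance sits in the two preceding results; the only points requiring care are (i) checking $J \subseteq \mathcal{L} + I_d(B_n(\varphi))$ so that the descent to $A$ and the subsequent lifting are legitimate — this is where the bookkeeping that $B(\varphi')$, $B_1(\varphi) = B(\varphi)$ and $B_n(\varphi)$ are nested submatrices is used — and (ii) invoking Theorem \ref{2ndform} in the precise form $\overline{\mathcal{A}} = \mathcal{D}$ rather than merely up to isomorphism, so that the chain of equalities and the inclusion in $A$ compose directly. I do not expect any genuine obstacle beyond this routine bookkeeping.
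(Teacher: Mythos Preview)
Your proposal is correct and is precisely the argument the paper intends: combine Theorem~\ref{2ndform} (giving $\overline{\mathcal{A}} = \frac{\bar g\,\overline{K}^{(n)}}{\overline{x_d}^{\,n}}$), the hypothesis $\overline{K}^{(n)}=\overline{K}^n$, and Theorem~\ref{Iterated Jacobian Dual in the almost linearly presented case} to get $\overline{\mathcal{A}}\subseteq \overline{\mathcal{L}+I_d(B_n(\varphi))}$, while the reverse inclusion is the standard $\mathcal{L}+I_d(B_n(\varphi))\subseteq \mathcal{L}:(\underline{x})^n=\mathcal{A}$; the bookkeeping that $J\subseteq \mathcal{L}+I_d(B_n(\varphi))$ via $B(\varphi')\subseteq B_1(\varphi)\subseteq B_n(\varphi)$ is exactly what is needed to lift from $A$ to $S$. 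The paper states the corollary without proof for this reason, and your write-up fills in the details faithfully.
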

Interestingly, the above corollary states that, under the conditions, $\mathcal{L}:(\underline{x})^n=\mathcal{L}+I_d(B_n(\varphi))$ and the index of stabilization of the ideal of iterated Jacobian dual is $n$.
\begin{rmk}
	In Theorem \ref{Iterated Jacobian Dual in the almost linearly presented case}, $D_1=D_1'$. To show the reverse inequality, notice that $x_d\in K$ and hence $\overline{g}=\frac{\overline{g}\overline{x_d}}{\overline{x_d}}\in D_1$ showing that $\overline{\mathcal{L}}\subseteq D_1$. Now let $w\in I_d(B_1(\varphi))$. Since $I_d(B(\varphi'))\subseteq J$, we can assume that $w\not\in I_d(B(\varphi'))$. Now in the ring $A$, $\overline{w}\overline{x_d}=\overline{g}\overline{w'}$. Thus $\overline{w}=\frac{\overline{g}\overline{w'}}{\overline{x_d}}\in D_1$.
	\\A natural question would be, if $D_i=D'_i,~1\leq i\leq n$ ?. The answer is affirmative, if a slight change is made while constructing the iterated Jacobian duals. The change being, for constructing $B_i(\varphi)$, instead of considering all the minors of $I_r(B_i(\varphi))\cap(\underline{x})$, we consider a subset of minors. These minors are determinants of  sub matrices all but one of whose columns are columns of $B(\varphi')$ except for the last column which is that of $B_{i-1}(\varphi)$. This type of construction has been independently studied by Cox,Hoffman and Wang, \cite{CHW} in the case of $d=2,~m=3$.
\end{rmk}
In the setting \ref{origset}, it was shown that $I_1(\varphi)=(x_1,\dots,x_d)$. But in general, the iterated Jacobian dual is defined to be constructed with any generating set containing $I_1(\varphi)$. The generating set need not even be homogeneous and this feature was explored in the case of $d=2$ by Hong, Simis and Vasconcelos in \cite{HoSV}.
We now present some examples on how to construct the iterated Jacobian duals.
\begin{ex}\label{exijd}
	Consider a matrix $\varphi=\left[\begin{array}{ccc}
	x_1& 0 & 0 \\ 
	x_2& x_1 &0  \\ 
	x_3& x_2 & x_1^2 \\ 
	0  & x_3 & x_3^2
	\end{array}  \right]$ in a polynomial ring $R=k[x_1,x_2,x_3]$. Since $\gr I_3(\varphi)\geq 2$, the converse of Hilbert-Burch Theorem, guarantees the existence of a grade 2 perfect ideal $I$ whose presentation matrix is $\varphi$. Also, the $G_3 $ condition is satisfied as $\gr I_1(\varphi)=3,~\gr I_2(\varphi)\geq 3$. Some candidates for the iterated Jacobian duals are as follows:
	\begin{equation*}
	B_1(\varphi)=\left[ \begin{array}{ccc}
	T_1& T_2 & xT_3 \\ 
	T_2& T_3 & 0 \\ 
	T_3& T_4 & zT_4
	\end{array} \right]
	\end{equation*}
	\begin{equation*}
	B_2(\varphi)=
	\left[\begin{array}{cccc}
	T_1& T_2 & xT_3 & T_3(T_3^2-T_2T_4) \\ 
	T_2& T_3 & 0 & 0 \\ 
	T_3& T_4 & zT_4 & T_4(-T_2^2+T_1T_3)
	\end{array}  \right]
	\end{equation*}
	In the next section we will show that the defining ideal of the Rees algebra $\mathcal{R}(I)$ satisfy $\mathcal{A}=\mathcal{L}+I_3(B_2(\varphi))$
\end{ex}
\begin{ex}
	Let $R=k[x_1,x_2]$. Let $I$ be a grade 2 perfect ideal whose presentation matrix  
	\begin{equation*}
	\varphi=\left[\begin{array}{ccc}
	x_1 & 0 & x_1^2\\
	x_2 & x_1 & x_2^2\\
	0 & x_2 &  x_1^2+x_2^2\\
	0 & 0 & x_1^2+x_2^2+x_1x_2
	\end{array}\right]
	\end{equation*}
	Some candidates for the iterated Jacobian duals are
	\begin{equation*}
	B_1(\varphi)=
	\begin{bmatrix}
	T_1 & T_2 & x_1T_1+x_1T_3+x_1T_4+x_2T_4\\
	T_2 & T_3 & x_2T_2+x_2T_3+x_2T_4
	\end{bmatrix}
	\end{equation*}
	
	\begin{equation*}
	\scalemath{0.76}{
		B_2(\varphi)=
		\begin{bmatrix}
		T_1 & T_2 & x_1T_1+x_1T_3+x_1T_4+x_2T_4\ & -T_1T_2-T_2T_3-T_2T_4 & -T_1T_3-T_3^3-T_3T_4\\
		T_2 & T_3 & x_2T_2+x_2T_3+x_2T_4 & T_1T_2+T_1T_3+T_1T_4-T_2T_4 & T_2^2+T_2T_3+T_2T_4-T_3T_4
		\end{bmatrix}}
	\end{equation*}
	\\Using \cite[3.6]{KPU1}, one can show that $f=T_2^2+T_1T_2+T_3^2+T_1T_3+T_3T_4+T_1T_4-T_2T_4\in \mathcal{A}$, but its clear that $f\not\in\mathcal{L}+I_2(B_2(\varphi))$. Subsequent iterations of the Jacobian dual, do not produce an element of bi-degree $(0,2)$. Thus $\mathcal{L}+I_2(B_2(\varphi))\neq\mathcal{A}$. 
	
\end{ex}

\section{Ideals with second analytic deviation one}
The aim of this section is to present a generating set of the defining ideal 
of the Rees algebra of ideals, whose second analytic deviation is one, in terms of the iterated 
Jacobian duals. Further, properties like depth, Cohen-Macaulayness, regularity of 
the Rees algebra are also studied.

The rest of this section assumes the setting of Theorem \ref{2ndform}. Let $\mathcal{F}(I)\cong \mathcal{R}(I)/(\underline{x})\mathcal{R}(I)$ be the \textit{special fiber} ring. The analytic spread, denoted by $\ell(I)$, is defined to be $\ell(I)=\dim \mathcal{F}(I)$. It is known that $\hght I\leq\ell(I)\leq \dim R$.

Further, we let $\mu(I)=d+1$. Since $I$ is of maximal analytic spread ($\ell(I)=d$, see for example \cite{UV1}), one has that the second analytic deviation $\mu (I)-\ell(I)$ is 1. Using Observation \ref{dimsym}, we also see that $A$ is a complete intersection domain. 

\begin{obs}\label{KisSCM}
Let $A,K$ be as defined in Notation \ref{2ndnot}. Then in the setting of \ref{origset}, $\overline{K}$ is generically a complete intersection and strongly Cohen-Macaulay ideal in $A$.
\end{obs}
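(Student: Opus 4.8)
The plan is to exploit the standing hypothesis $\mu(I)=d+1$ of this section, under which $m=d+1$ and the minor ideals entering $J$ and $K$ become trivial or principal. Indeed, $B(\varphi')$ is a $d\times(d-1)$ matrix, so $I_d(B(\varphi'))=0$ and $J=(L_1,\dots,L_{m-2})$, while $B$ is a square $(d-1)\times(d-1)$ matrix, so $I_{d-1}(B)=(\det B)$ and $\overline{K}=(\overline{x_d},\overline{\det B})$ is generated by two elements. Recall from Observation \ref{KisCM} that $A$ is a Cohen--Macaulay domain of dimension $d+2$, that $\overline{K}$ and $\overline{(\underline{x})}$ are height-one Cohen--Macaulay $A$-ideals, and that $\overline{(\underline{x})}$ is prime; moreover $\overline{x_d}$ is a nonzerodivisor on $A$, since $x_d\notin J$ by bidegree considerations. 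Thus $\overline{K}$ is an almost complete intersection of height one in a Cohen--Macaulay domain.

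To see that $\overline{K}$ is generically a complete intersection, I would first show $\mathrm{ht}_A(\overline{(\underline{x})}+\overline{K})\ge 2$. In $S$ one has $(\underline{x})+K=(\underline{x})+(\det B)$, because $J\subseteq(\underline{x})$ and $x_d\in(\underline{x})$; since $\det B$ is a nonzero element of $k[T_1,\dots,T_m]$ (as noted in the proof of Lemma \ref{colon}), the sequence $x_1,\dots,x_d,\det B$ is $S$-regular, so $(\underline{x})+K$ has height $d+1=m$ in $S$, and hence $\overline{(\underline{x})}+\overline{K}$ has height at least $m-\mathrm{ht}_S J=2$ in $A$. Consequently no minimal prime $p$ of $\overline{K}$ (all of which have height one) can contain $\overline{(\underline{x})}$; localizing the containments $\overline{(\underline{x})}\,\overline{K}\subseteq(\overline{x_d})\subseteq\overline{K}$ at such a $p$ gives $\overline{K}_p=(\overline{x_d})_p$, a principal ideal. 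As $\mathrm{ht}\,\overline{K}=1$, this proves $\overline{K}$ is generically a complete intersection.

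For strong Cohen--Macaulayness it suffices to examine the Koszul homology of $\overline{K}$ on the two-element generating set $\overline{x_d},\overline{\det B}$. Here $H_0=A/\overline{K}$ is Cohen--Macaulay of dimension $d+1$ by Observation \ref{KisCM}, and $H_2=(0:_A\overline{K})=0$ since $A$ is a domain. For $H_1$, the fact that $\overline{x_d}$ is a nonzerodivisor yields $H_1\cong\left((\overline{x_d}):_A\overline{\det B}\right)/(\overline{x_d})=\left((\overline{x_d}):_A\overline{K}\right)/(\overline{x_d})=\overline{(\underline{x})}/(\overline{x_d})$, the last equality being the case $i=1$ of Lemma \ref{colon} (using that $\overline{(\underline{x})}$ is prime, hence equal to its own symbolic power). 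It remains to check this module is Cohen--Macaulay of dimension $d+1$: from $0\to\overline{(\underline{x})}\to A\to A/\overline{(\underline{x})}\to 0$ the depth lemma shows $\overline{(\underline{x})}$ is a maximal Cohen--Macaulay $A$-module, and then $0\to(\overline{x_d})\to\overline{(\underline{x})}\to\overline{(\underline{x})}/(\overline{x_d})\to 0$ (with $(\overline{x_d})\cong A$) gives $\dep\overline{(\underline{x})}/(\overline{x_d})\ge d+1$; since $\overline{(\underline{x})}/(\overline{x_d})$ is nonzero (again by bidegree, $\overline{(\underline{x})}\ne(\overline{x_d})$) and is annihilated by $\overline{K}$, its dimension is at most $d+1=\dim A/\overline{K}$, whence it is Cohen--Macaulay of dimension $d+1$. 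Therefore every Koszul homology module of $\overline{K}$ is Cohen--Macaulay of dimension $\dim A/\overline{K}$, i.e.\ $\overline{K}$ is strongly Cohen--Macaulay.

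I expect the $H_1$ computation to be the crux: identifying $H_1$ with $\overline{(\underline{x})}/(\overline{x_d})$ via the colon formula of Lemma \ref{colon}, and then extracting its Cohen--Macaulayness and dimension from the two depth-lemma sequences. The height estimate for the generic complete intersection property is the other place needing care, but it reduces to the elementary fact that $x_1,\dots,x_d,\det B$ is a regular sequence in $S$.
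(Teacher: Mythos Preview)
Your proof is correct. For the generic complete intersection statement you follow essentially the same route as the paper: you show that no height-one prime over $\overline{K}$ can contain $\overline{(\underline{x})}$ and then localize the containments $\overline{(\underline{x})}\,\overline{K}\subseteq(\overline{x_d})\subseteq\overline{K}$ to conclude $\overline{K}_P=(\overline{x_d})_P$. The paper reaches the same localization by quoting that $\overline{(\underline{x})}$ is not an associated prime of $\overline{K}$ (extracted from Observation~\ref{KisCM} and the proof of Lemma~\ref{colon}), whereas you re-derive this via the height estimate $\mathrm{ht}_A(\overline{(\underline{x})}+\overline{K})\ge 2$; these are two packagings of the same fact.

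The genuine difference is in the strongly Cohen--Macaulay part. The paper observes that $\overline{K}=(\overline{w},\overline{x_d})$ is an almost complete intersection of height one in the Cohen--Macaulay ring $A$ with $A/\overline{K}$ Cohen--Macaulay, and then invokes \cite[2.2]{HU1}, which says precisely that such an ideal is strongly Cohen--Macaulay. You instead compute the Koszul homology directly: $H_2=0$ since $A$ is a domain, $H_0=A/\overline{K}$ is Cohen--Macaulay by Observation~\ref{KisCM}, and you identify $H_1\cong\overline{(\underline{x})}/(\overline{x_d})$ via Lemma~\ref{colon}, then verify this module is Cohen--Macaulay of dimension $d+1$ with two applications of the depth lemma. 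Your argument is in effect a proof of the special case of \cite[2.2]{HU1} needed here; it is longer but entirely self-contained, while the paper's proof is a one-line citation. Both approaches are sound.
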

\begin{proof}
Let $P$ be a prime ideal in the ring $A$ of height 1 containing $\overline{K}$. Since $\overline{(x_1,\dots,x_d)}$ is not an associated prime of $\overline{K}$ (Observation \ref{KisCM} and proof of Observation \ref{colon}), we have 
	$(\overline{x_d})_P:_{A_P}\overline{K}_P=\overline{(x_1\dots,x_d)}_P=A_P$ showing that $\overline{K}_P=(\overline{x_d})_P$. Thus $\overline{K}$ 
	is generically a complete intersection.
	
Notice that $\overline{K}=(\overline{w},\overline{x_d})$ where $I_{d-1}(B)=(w)$, is an almost complete intersection ideal of height 1, in the Cohen-Macaulay ring $A$. Also, $A/\overline{K}$ is 
	Cohen-Macaulay (Observation \ref{KisCM}) and hence  $\overline{K}$ is strongly Cohen-Macaulay (\cite[2.2]{HU1}).
\end{proof}
\begin{obs}\label{htofsubmaxminors}
Let $S,B(\phi')$ be as defined in Setting \ref{origset} and $\mu(I)=d+1$, then in the ring $S$, $\hght I_{d-1}B(\phi')=2$.
\end{obs}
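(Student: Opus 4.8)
The plan is to recognize $A$ as a symmetric algebra over $k[T_1,\dots,T_m]$ and then reduce the whole statement to the behaviour of the first Fitting ideal of a torsion‑free rank‑one module. Since $\mu(I)=d+1$ we have $m=d+1$, so $B(\varphi')$ is a $d\times(d-1)$ matrix and $I_d(B(\varphi'))=0$; hence $J=(L_1,\dots,L_{m-2})+I_d(B(\varphi'))=(L_1,\dots,L_{d-1})=\bigl([x_1\cdots x_d]\cdot B(\varphi')\bigr)$ and $A=S/J$. Regarding $S=k[\underline T][x_1,\dots,x_d]$ as a polynomial ring over $k[\underline T]$, the $d-1$ generators of $J$ are exactly the $k[\underline T]$‑linear forms in the $x_i$ whose coefficient matrix is $B(\varphi')$, so $A=\mathrm{Sym}_{k[\underline T]}(E)$ with $E=\mathrm{coker}\bigl(B(\varphi')\colon k[\underline T]^{d-1}\to k[\underline T]^{d}\bigr)$. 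By Observation \ref{KisCM} (with $m=d+1$) $J$ is prime and $\dim A=d+2$, so $A$ is a domain; since $E$ sits inside $\mathrm{Sym}_{k[\underline T]}(E)=A$ as the degree‑one part, $E$ is torsion‑free over $k[\underline T]$. As $E$ is the cokernel of a $d\times(d-1)$ matrix it has rank $\ge d-(d-1)=1$, and by the dimension formula for symmetric algebras (as in Observation \ref{dimsym}) rank $\ge 2$ would give $\dim A\ge \dim k[\underline T]+\mathrm{rank}\,E\ge d+3$; hence $E$ is torsion‑free of rank exactly $1$.

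For the lower bound $\hght I_{d-1}(B(\varphi'))\ge 2$ I would use that $I_{d-1}(B(\varphi'))=\mathrm{Fitt}_1(E)$, which is an invariant of $E$ up to isomorphism. A torsion‑free rank‑one module over the regular domain $k[\underline T]$ is isomorphic to a nonzero ideal, and, $k[\underline T]$ being a UFD, we may take that ideal $\mathfrak b$ to be content‑free (its generators have no nonunit common divisor); then $E\cong\mathfrak b$ and $I_{d-1}(B(\varphi'))=\mathrm{Fitt}_1(\mathfrak b)$. The entries of $B(\varphi')$ are linear forms in $T_1,\dots,T_m$, so $I_{d-1}(B(\varphi'))$ is a proper ideal, whence $\mathfrak b\ne k[\underline T]$ (otherwise $\mathrm{Fitt}_1(\mathfrak b)=k[\underline T]$); a proper content‑free ideal in a UFD has height $\ge 2$. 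Consequently, for every prime $\mathfrak p$ with $\hght\mathfrak p\le 1$ we get $\mathfrak b_\mathfrak p=k[\underline T]_\mathfrak p$, hence $\mathrm{Fitt}_1(\mathfrak b)_\mathfrak p=k[\underline T]_\mathfrak p$ and $\mathrm{Fitt}_1(\mathfrak b)\not\subseteq\mathfrak p$. For the upper bound, $I_{d-1}(B(\varphi'))$ is the ideal of maximal minors of the $d\times(d-1)$ matrix $B(\varphi')$ and is proper, so the classical bound on the codimension of determinantal ideals gives $\hght I_{d-1}(B(\varphi'))\le\bigl(d-(d-1)+1\bigr)\cdot\bigl((d-1)-(d-1)+1\bigr)=2$. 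Finally, $I_{d-1}(B(\varphi'))$ is extended from $k[\underline T]$ and $S=k[\underline T][\underline x]$ is a polynomial extension, so its height in $S$ equals its height in $k[\underline T]$, and the two bounds give exactly $2$.

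The two height estimates on the determinantal/Fitting ideal and the passage between $S$ and $k[\underline T]$ are routine; the step I expect to carry the real content is the identification $A=\mathrm{Sym}_{k[\underline T]}\bigl(\mathrm{coker}\,B(\varphi')\bigr)$ together with the deduction that this cokernel is torsion‑free of rank one. This is precisely where the hypothesis $\mu(I)=d+1$ is used (it forces $I_d(B(\varphi'))=0$ and makes $A$ a complete intersection of the expected dimension $d+2$) and where the $G_d$ condition enters, via the primeness of $J$ from Observation \ref{KisCM} and Theorem \ref{goup}. Once $\mathrm{coker}\,B(\varphi')$ is known to be a torsion‑free rank‑one $k[\underline T]$‑module, the Fitting‑ideal computation of the height is automatic.
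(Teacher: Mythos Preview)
Your proof is correct and follows essentially the same strategy as the paper: both identify $A$ with $\mathrm{Sym}_{k[\underline T]}(\mathrm{coker}\,B(\varphi'))$, use $\dim A=d+2$ together with the Huneke--Rossi dimension formula to conclude that $\mathrm{coker}\,B(\varphi')$ has rank one, and then extract the height bound on $I_{d-1}(B(\varphi'))=\mathrm{Fitt}_1(\mathrm{coker}\,B(\varphi'))$.

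The only real difference is in this last step. The paper simply invokes \cite[6.8,~6.6]{HSV2}: since $A=\mathrm{Sym}_{k[\underline T]}(E)$ is a domain, $E$ satisfies the $\mathcal{F}_1$ condition, which for a rank-one module gives $\mathrm{grade}\,\mathrm{Fitt}_1(E)\ge 2$ directly. You instead give an explicit, self-contained argument using the UFD structure of $k[\underline T]$: realize $E$ as a content-free proper ideal $\mathfrak b$, observe that such an ideal has height $\ge 2$, and deduce that $\mathrm{Fitt}_1(E)$ is the unit ideal at every prime of height $\le 1$. Your route avoids the external citation at the cost of a few more lines; the paper's is terser but depends on recognizing the HSV framework. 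Either way the upper bound is the Eagon--Northcott inequality, and the passage from $k[\underline T]$ to $S$ by flatness is routine.
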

\begin{proof}
We know that $A$ is a complete intersection domain (Observation \ref{dimsym}) of dimesion $d+2$. Since $\mu(I)=d+1$, we see that $I_d(B(\phi'))=0$. Thus $A= S/(L_1,\dots,L_{m-2})$. Notice that $A$ can be viewed as a symmetric algebra $A\cong \sym_{k[\underline{T}]}(\coker B(\phi'))$ over the ring $k[\underline{T}]$. Since $A$ is a domain we see that $$d+2=\dim A=\dim \sym_{k[\underline{T}]}(\coker B(\phi'))=\rk \coker B(\phi')+\dim k[\underline{T}].$$ Thus $\rk\coker B(\phi')=1$. Now using \cite[6.8,6.6]{HSV2} we see that $\gr I_{d-1}(B(\phi'))\geq 2$ which is the maximum possible bound.
\end{proof}
\begin{thm}\label{thmandev1}
	Let $R=k[x_1,\cdots, x_d]$ be a polynomial ring and let $I$ be a grade 2 perfect $R$-ideal whose presentation matrix $\varphi$ is almost linear. If $I$ satisfies $G_d$ and $\mu(I)=d+1$, then the defining ideal of $\mathcal{R}(I)$ satisfies
	\begin{equation*}
	\mathcal{A}=\mathcal{L}+I_d(B_n(\varphi))=\mathcal{L}:(x_1,\cdots, x_d)^n
	\end{equation*}
	where $n$ is the degree of the entries of the last column of $\varphi$. Furthermore, the special fiber ring $\mathcal{F}(I)\cong k[T_1,\dots,T_{d+1}]/(f)$ where $\deg f=n(d-1)+1$.
\end{thm}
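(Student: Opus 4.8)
The plan is to deduce both equalities from the structural results of Sections 3 and 4 once the single symbolic-power identity $\overline K^{(n)}=\overline K^{\,n}$ is established in the ring $A$ of Notation \ref{2ndnot}, and then to read off the special fiber from the explicit generators of $\overline{\mathcal A}$ that this provides. First I would record the simplifications forced by $\mu(I)=d+1$, i.e.\ $m=d+1$: then $B(\varphi')$ is a $d\times(d-1)$ matrix, so $I_d(B(\varphi'))=0$; hence $A=S/(L_1,\dots,L_{d-1})$ is a complete intersection domain (Observations \ref{dimsym} and \ref{KisCM}) and, as in Observation \ref{KisSCM}, $\overline K=(\overline{x_d},\overline w)$ with $w=\det B$ the unique $(d-1)\times(d-1)$ minor of $B$. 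Theorem \ref{1stform} already gives $\mathcal A=\mathcal L:(x_1,\dots,x_d)^n$; and Theorem \ref{Iterated Jacobian Dual in the almost linearly presented case} together with the inclusion $\mathcal L+I_d(B_n(\varphi))\subseteq\mathcal L:(\underline x)^n=\mathcal A$ and with Theorem \ref{2ndform} sandwiches
\[
\frac{\overline g\,\overline K^{\,n}}{\overline{x_d}^{\,n}}\ \subseteq\ \overline{\mathcal L+I_d(B_n(\varphi))}\ \subseteq\ \overline{\mathcal A}\ =\ \frac{\overline g\,\overline K^{(n)}}{\overline{x_d}^{\,n}}.
\]
Thus once $\overline K^{(n)}=\overline K^{\,n}$ is in hand, Corollary \ref{condition of equality of ordinary and symbolic powers of K} yields $\mathcal A=\mathcal L+I_d(B_n(\varphi))=\mathcal L:(\underline x)^n$.

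The hard part will be the equality $\overline K^{(n)}=\overline K^{\,n}$, which I would prove for every exponent simultaneously. By Observation \ref{KisSCM}, $\overline K$ is strongly Cohen--Macaulay and generically a complete intersection of height one in the Cohen--Macaulay domain $A$; since moreover $\mu(\overline K)\le 2$, it automatically satisfies $G_\infty$ (at a minimal prime $P$ of $\overline K$ one has $\mu(\overline K_P)=\hght\overline K_P=1=\hght P$, while $\mu(\overline K_P)\le 2\le\hght P$ when $\hght P\ge 2$). The blow-up algebra theory for strongly Cohen--Macaulay ideals satisfying $G_\infty$ (see \cite{HSV2} and the literature it spawned) then shows that $\overline K$ is of linear type and that $\mathcal R(\overline K)$ and $\mathrm{gr}_{\overline K}(A)$ are Cohen--Macaulay, so the powers $\overline K^{\,i}$ are unmixed of height one. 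Since $\overline K$ is generically a complete intersection, $\overline K^{\,i}$ and $\overline K^{(i)}$ agree after localizing at each minimal prime of $\overline K$, and unmixedness upgrades this to the equality $\overline K^{\,i}=\overline K^{(i)}$. Extracting the unmixedness of the powers from the strongly Cohen--Macaulay/$G_\infty$ hypotheses is the delicate point; alternatively one can work directly from the linear-type presentation of the almost complete intersection $\overline K$.

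Finally, for the special fiber I would exploit that $A/(\underline x)A\cong k[T_1,\dots,T_{d+1}]$, which holds because every $L_i=[T_1\cdots T_m]\cdot v_i$ lies in $(\underline x)S$ (the columns $v_i$ of $\varphi'$ being linear). From $\overline{\mathcal A}=\overline g\,\overline K^{\,n}/\overline{x_d}^{\,n}$ and the fact that $\overline K^{\,n}$ is generated by the products $\overline{x_d}^{\,n-j}\overline w^{\,j}$, $0\le j\le n$, the $A$-ideal $\overline{\mathcal A}$ is generated by the elements $\overline g\,\overline w^{\,j}/\overline{x_d}^{\,j}$, $0\le j\le n$. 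A bidegree count --- $g$ has $x$-degree $n$ and $T$-degree $1$, $w$ has $x$-degree $0$ and $T$-degree $d-1$, and $x_d$ has $x$-degree $1$ --- shows that the generators with $j<n$ are bihomogeneous of positive $x$-degree and hence vanish in $A/(\underline x)A$, while $f:=\overline g\,\overline w^{\,n}/\overline{x_d}^{\,n}$ has $x$-degree $0$ and $T$-degree $n(d-1)+1$. Hence the image of $\overline{\mathcal A}$ in $k[T_1,\dots,T_{d+1}]$ is $(f)$, so $\mathcal F(I)=A/(\overline{\mathcal A}+(\underline x)A)\cong k[T_1,\dots,T_{d+1}]/(f)$; and since $\mathcal F(I)$ is a domain of dimension $\ell(I)=d$, the ideal $(f)$ has height one, so $f\ne 0$ and $\deg f=n(d-1)+1$.
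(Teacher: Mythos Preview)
Your overall architecture matches the paper's exactly: reduce to the single identity $\overline K^{(n)}=\overline K^{\,n}$ in $A$ and then invoke Corollary~\ref{condition of equality of ordinary and symbolic powers of K}; your reading of the special fiber from the explicit generators $\overline g\,\overline w^{\,j}/\overline{x_d}^{\,j}$ is correct and amounts to the same bidegree count the paper does via the iterated Jacobian dual.

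The gap is in the proof of $\overline K^{(n)}=\overline K^{\,n}$. Verifying $G_\infty$ for $\overline K$ is not enough, and the implication ``SCM $+$ $G_\infty$ $+$ generically CI $\Rightarrow$ powers unmixed'' that you invoke is false. Take $R=k[x,y,z]/(xz-y^2)$ and $I=(x,y)$: this is a height-one prime with $R/I\cong k[z]$, one checks $H_1(x,y;R)\cong ((x):y)/(x)=I/(x)\cong k[z]$, so $I$ is SCM; it is generically a complete intersection and satisfies $G_\infty$ (indeed $\mu(I_{\mathfrak m})=2=\hght\mathfrak m$); by \cite{HSV1} the Rees algebra is Cohen--Macaulay. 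Yet $x\in I^{(2)}\setminus I^2$, since $xz=y^2\in I^2$ with $z\notin I$, while $I^2=x(x,y,z)$. Thus Cohen--Macaulayness of $\mathcal R(\overline K)$ and $\mathrm{gr}_{\overline K}(A)$ does not force the powers of $\overline K$ to be unmixed, and your ``delicate point'' is exactly where the argument breaks.

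What the paper does instead is verify the strictly stronger local bound $\mu(\overline K_P)\le\hght P-1$ at every height-two prime $P\in V(\overline K)$, which together with SCM and generic CI is the hypothesis of \cite[3.4]{SV1}. For $P\not\supseteq\overline{(\underline x)}$ this is immediate since $\overline K_P=(\overline{x_d})_P$. For $P\supseteq\overline{(\underline x)}$ one needs the extra input you are missing, namely Observation~\ref{htofsubmaxminors}: when $m=d+1$ one has $\hght I_{d-1}(B(\varphi'))=2$ in $S$, so no height-two prime of $A$ can contain both $\overline{(\underline x)}$ and $\overline{I_{d-1}(B(\varphi'))}$. Writing $I_{d-1}(B(\varphi'))=(w,w_1',\dots,w_{d-1}')$ and using Lemma~\ref{LemmaofMinors} to get $\overline{x_i}\,\overline w=\pm\,\overline{x_d}\,\overline{w_i'}$, one finds some $\overline{w_i'}\notin P$, hence $\overline{x_d}\in(\overline w)_P$ and $\overline K_P=(\overline w)_P$ is principal. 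This height-two analysis is the heart of the proof; without it the equality $\overline K^{(n)}=\overline K^{\,n}$ is not established.
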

\begin{proof}
	Using Corollary \ref{condition of equality of ordinary and symbolic powers of K} in the previous section, it suffices to show that 
	$\overline{K}^n=\overline{K}^{(n)}$.
	
	We now show that
	\begin{equation}\label{muKP}
	\mu(\overline{K}_P)\leq \hght P-1=1 \mbox{ for all }P\in V(\overline{K}),\mbox{~with~}\hght P=2.
	\end{equation}
	Let $P\in V(\overline{K})$ such that $\hght P=2$. If 
	$P\not\supset \overline{(x_1,\dots,x_d)}$, then, as above, $\overline{K}_P=(\overline{x_d})_P$ and hence \eqref{muKP} is trivially satisfied. 
	
	
	Now suppose $P\supset \overline{(x_1,\dots,x_d)}$. Observe that, since $\hght I_{d-1}(B(\varphi'))=2$ (Observation \ref{htofsubmaxminors}), we have $\hght (x_1,\dots,x_d,I_{d-1}(B(\varphi'))=d+2$ in 
	$k[\underline{x},\underline{T}]$, and hence  $\hght \overline{(x_1,\dots,x_d,I_{d-1}(B(\varphi'))}=3$ in 
	$A$. Thus $P\not\supset \overline{I_{d-1}(B(\varphi'))}$. Now let \[(w)=I_{d-1}(B)\subset I_{d-1}(B(\varphi '))=(w,w'_1,\cdots w'_{d-1}).\] Using Lemma \ref{LemmaofMinors}, we have $\overline{x_i}\cdot \overline{w}=(-1)^{t-k}\overline{x_d}\cdot \overline{w'_i}$. Since  $\overline{w}\in \overline{K}\subseteq P$, we have $\overline{w'_i}\not\in P$ for some $i
	\in \{1,\dots,d-1\}$. Thus $\overline{K}_P=(\overline{w})_P$ and hence \eqref{muKP} is satisfied.
	Putting all these observations together along with Observation \ref{KisSCM} we see that the hypothesis of \cite[3.4]{SV1} is satisfied. Thus $\overline{K}^n=\overline{K}^{(n)}$.
	
	The statement on the special fiber ring is clear as $(\underline{x})+\mathcal{A}=(\underline{x})+\mathcal{L}+I_d(B_n(\varphi))=(\underline{x})+(f')$. Also, $f'\in I_d(B_n(\varphi))\backslash I_d(B_{n-1}(\varphi))$ and hence $\deg f'=(0,n(d-1)+1)$. Now let $\overline{f'}=f$ where $\bar{ \ \ }$ represents the image in the ring $k[T_1,\dots, T_{d+1}]$.
\end{proof}
\begin{cor}
	Let $I$ be the ideal defining a set of 11 points in $\mathbb{P}^2$. Then for a general choice of points, the defining equations of the Rees algebra satisfies $\mathcal{A}=\mathcal{L}+I_3(B_2(\varphi))$, where $\varphi$ is a presentation matrix of $I$.
\end{cor}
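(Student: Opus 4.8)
The plan is to deduce this corollary directly from Theorem~\ref{thmandev1} with $d=3$ and $n=2$; all of the content lies in checking the hypotheses for a general set $X$ of $11$ points in $\mathbb{P}^2$. Write $R=k[x_1,x_2,x_3]$ and let $I=I(X)$ be the saturated homogeneous ideal of $X$ (with $k$ infinite, so that ``general'' makes sense).

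First the structural facts, which need no genericity. Since $X$ is a reduced zero-dimensional subscheme, $I$ is a saturated unmixed ideal of height $2$, hence a grade $2$ perfect ideal by the Hilbert--Burch theorem. At each of the $11$ points $X$ is a smooth point of $\mathbb{P}^2$, so $I$ is locally a complete intersection there; the points are the only primes of $V(I)$ of height $\le 2$, and for each such $P$ one has $\mu(I_P)=2=\hght P$, so $I$ satisfies $G_3$. The minimal generators of $I$ will be seen to lie in a single degree (next paragraph), so the remaining ambient hypotheses of Setting~\ref{origset} are met, with $m=\mu(I)=4>3=d$.

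The heart of the argument is to identify the graded Betti numbers of $I$ for $X$ general. A general set of $11$ points imposes independent conditions on forms of every degree, so $R/I$ has the generic Hilbert function $h_j=\min\{\binom{j+2}{2},\,11\}$, that is $1,3,6,10,11,11,\dots$, and a short computation gives the $K$-polynomial $(1-t)^3H_{R/I}(t)=1-4t^4+2t^5+t^6$. One then checks that for general $X$ the minimal free resolution realizes exactly the numerical data of this polynomial,
\[
0\longrightarrow R(-5)^2\oplus R(-6)\longrightarrow R(-4)^4\longrightarrow I\longrightarrow 0,
\]
with no consecutive cancellation. This is the one genuinely ``general points'' input: it suffices that the multiplication maps $I_4\otimes R_1\to I_5$ and $I_5\otimes R_1\to I_6$ have maximal rank, which forces $\beta_{0,5}=\beta_{0,6}=0$ and hence $\beta_{1,5}=2$, $\beta_{1,6}=1$ from the $K$-polynomial. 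Maximal rank of these maps holds on a dense open subset of configurations --- equivalently, one invokes the classical fact that a general set of points in $\mathbb{P}^2$ attains the minimal resolution compatible with its Hilbert function (upper semicontinuity of the graded Betti numbers together with the existence of one such configuration) --- and ``general'' in the statement refers to this open set.

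Finally, assemble. From the displayed resolution, a Hilbert--Burch matrix $\varphi$ of $I$ is $4\times 3$ with its first two columns consisting of linear forms and its last column consisting of quadrics; thus $\varphi$ is almost linearly presented with $n=2$, and $\mu(I)=4=d+1$. Since $I$ is a grade $2$ perfect ideal generated in degree $4$, satisfying $G_3$ with $\mu(I)=d+1$, Theorem~\ref{thmandev1} applies and yields $\mathcal{A}=\mathcal{L}+I_d(B_n(\varphi))=\mathcal{L}+I_3(B_2(\varphi))$, as claimed. The only delicate point in the whole argument is the maximal-rank / minimal-resolution claim of the previous paragraph; everything else is bookkeeping, and the Betti numbers for an explicit configuration can be confirmed with a computer algebra system.
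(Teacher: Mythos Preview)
Your argument is correct and follows exactly the same route as the paper: verify that a general set of $11$ points in $\mathbb{P}^2$ satisfies the hypotheses of Theorem~\ref{thmandev1} with $d=3$, $m=4$, $n=2$, and conclude. The only difference is one of presentation: the paper simply cites \cite[1.2]{T1} for the shape of the minimal free resolution (the $4\times 3$ almost linear Hilbert--Burch matrix with $n=2$), whereas you compute the generic Hilbert function, read off the $K$-polynomial $1-4t^4+2t^5+t^6$, and invoke the minimal-resolution property for general points in $\mathbb{P}^2$ to rule out cancellation; your verification of $G_3$ via the local complete intersection property at the points is likewise just an unpacking of what the paper takes as given.
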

\begin{proof}
	From the discussion in \cite[1.2]{T1}, we see that for a general choice of $11=\binom{4+1}{2}+1$ points, the presentation matrix $\varphi$ of $I$ is of size $4\times 3$ and satisfies all the hypothesis of the previous theorem. Also, the presentation matrix $\varphi$ is almost linear with the last column consisting of quadratic entries. Thus the defining ideal of the Rees algebra satisfies $\mathcal{A}=\mathcal{L}+I_3(B_2(\varphi))$.
\end{proof}
\begin{ex}
	In Example \ref{exijd}, $\overline{K}=(\overline{T_1T_3}-\overline{T_2^2},\overline{x_2})$, an almost complete intersection in the domain $A$. By the above theorem, $\mathcal{A}=\mathcal{L}+I_3(B_2(\varphi))=\mathcal{L}:(x_1,x_2,x_3)^2.$
\end{ex}

\begin{rmk}\label{remd1}
	We know that the defining ideal of the Rees algebra is also of the form $\mathcal{L}:(\underline{x})^n$. Since $\mathcal{L}+I_d(B(\varphi))\subset (\underline{x})$, the defining ideal of $\mathcal{R}(I)$ is not of the expected form. Also, $n$ is minimal by Remark \ref{nisMinimal}. Thus the Rees algebra is not a Cohen-Macaulay ring \cite[4.5]{SUV1}.
\end{rmk}
\vskip 2mm\noindent
\textbf{Depth, Relation type, Regularity}:
We first begin by constructing a series of short exact sequences which are instrumental in realizing invariants such as depth and regularity of the Rees algebra.

Let $\mathfrak{m}$ denote the ideal $(\underline{x})$ and $\mathfrak{n}$, the homogeneous maximal ideal of $A$. As in the above theorem, notice that $\overline{K}=(\overline{w},\overline{x_d}),~(w)= I_{d-1}(B)$. Also $\overline{K}$ is a Cohen-Macaulay ideal and $\mathfrak{m}A=(\overline{x_d}):\overline{K}$, which gives the exact sequence of bi-graded $A$-modules
\begin{equation}
0\rightarrow \mathfrak{m}A(0,-(d-1))\rightarrow A(-1,0)\oplus A(0,-(d-1))\rightarrow \overline{K}\rightarrow 0.
\end{equation}
Applying $\sym(~)$ to the above short exact sequence  and considering the $n$-th degree component, we obtain  
\begin{multline*}
\mathfrak{m}A(0,-(d-1))\otimes \sym_{n-1}(A(-1,0)\oplus A(0,-(d-1))) \xrightarrow{\sigma} \\\sym_n(A(-1,0)\oplus A(0,-(d-1)))\rightarrow \sym_n(\overline{K})\rightarrow 0.
\end{multline*}
Due to rank reasons $\ker\sigma$ is torsion, but the source of $\sigma$ is torsion-free module and hence $\sigma$ is injective. Thus we have an exact sequence 
\begin{multline}\label{bigradedsym}
0\rightarrow \mathfrak{m}A(0,-(d-1))\otimes \sym_{n-1}(A(-1,0)\oplus A(0,-(d-1))) \rightarrow \\\sym_n(A(-1,0)\oplus A(0,-(d-1)))\rightarrow \sym_n(\overline{K})\rightarrow 0.
\end{multline}
Using Observation \ref{KisSCM}, notice that $\overline{K}$ satisfies the $G_\infty$ condition and is strongly Cohen-Macaulay. Thus $\overline{K}$ is an $A$-ideal of linear type (\cite[2.6]{HSV1}). Therefore $\sym_n(\overline{K})\cong \overline{K}^n$.

Thus sequence \eqref{bigradedsym} now reads
\begin{equation}\label{regmAn}
0\rightarrow \bigoplus\limits_{i=0}^{n-1}\mathfrak{m}A(-i,-(n-i)(d-1))\rightarrow \bigoplus\limits_{i=0}^n A(-i,-(n-i)(d-1))\rightarrow \overline{K}^n\rightarrow 0.
\end{equation}

Recall that a Noetherian local ring $\mathcal{S}$ is said to be \textit{almost} Cohen-Macaulay when $\dep \mathcal{S}=\dim\mathcal{S}-1$.
\begin{thm}
	In the setting of Theorem \ref{thmandev1}, $\dep\mathcal{F}(I)=\dep \mathcal{R}(I)=d$, i.e the Rees algebra $\mathcal{R}(I)$ is almost Cohen-Macaulay and the special fiber ring $\mathcal{F}(I)$ is Cohen-Macaulay.
\end{thm}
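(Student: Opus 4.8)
The plan is to read off both depth equalities from short exact sequences, using the structural results already in place as inputs and the depth lemma as the only tool. The statement about the special fiber is essentially immediate: Theorem \ref{thmandev1} identifies $\mathcal{F}(I)\cong k[T_1,\dots,T_{d+1}]/(f)$, a hypersurface ring, which is Cohen-Macaulay of dimension $d=\ell(I)$, so $\dep\mathcal{F}(I)=d$. All the content is therefore in the Rees algebra, and I concentrate on that.

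Since $J\subseteq\mathcal{A}$ we have $\mathcal{R}(I)\cong A/\overline{\mathcal{A}}$, and combining Theorem \ref{2ndform} with the equality $\overline{K}^{(n)}=\overline{K}^n$ proved in Theorem \ref{thmandev1} gives an isomorphism of $A$-modules $\overline{\mathcal{A}}\cong\overline{K}^n$ (up to a bidegree shift, which is irrelevant for depth). This produces a short exact sequence of $A$-modules $0\to\overline{\mathcal{A}}\to A\to\mathcal{R}(I)\to 0$. First I would pin down $\dep\mathfrak{m}A$: by Observation \ref{KisCM}, $\mathfrak{m}A=\overline{(x_1,\dots,x_d)}$ is a height-one Cohen-Macaulay ideal in the Cohen-Macaulay domain $A$, and $\dim A=d+2$; hence $A/\mathfrak{m}A$ has depth $d+1$, and the sequence $0\to\mathfrak{m}A\to A\to A/\mathfrak{m}A\to 0$ forces $\dep\mathfrak{m}A=d+2$, i.e. $\mathfrak{m}A$ is a maximal Cohen-Macaulay $A$-module. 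Feeding this into the exact sequence \eqref{regmAn} --- whose exactness rests on $\overline{K}$ being of linear type, via Observation \ref{KisSCM} --- and applying the depth lemma gives $\dep\overline{K}^n\ge\min\{\dep\mathfrak{m}A-1,\ \dep A\}=d+1$.

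Now I would apply the depth lemma to $0\to\overline{\mathcal{A}}\to A\to\mathcal{R}(I)\to 0$: from $\dep A=d+2$ and $\dep\overline{\mathcal{A}}=\dep\overline{K}^n\ge d+1$ one obtains $\dep\mathcal{R}(I)\ge\min\{\dep\overline{\mathcal{A}}-1,\ \dep A\}=d$. For the reverse inequality, Remark \ref{remd1} shows that $\mathcal{R}(I)$ is not Cohen-Macaulay, and since $\dim\mathcal{R}(I)=d+1$ this forces $\dep\mathcal{R}(I)\le d$; hence $\dep\mathcal{R}(I)=d$ and $\mathcal{R}(I)$ is almost Cohen-Macaulay. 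Together with the fiber computation this gives $\dep\mathcal{F}(I)=\dep\mathcal{R}(I)=d$. The main point requiring care is not the closing depth chase, which is routine, but ensuring that the lower bound extracted from \eqref{regmAn} is genuinely $d+1$ rather than $d$ --- this is exactly where maximal Cohen-Macaulayness of $\mathfrak{m}A$ is used --- and checking that the bidegree shifts appearing in \eqref{regmAn} and in $\overline{\mathcal{A}}\cong\overline{K}^n$ leave depth unchanged; neither is a serious obstacle. One could alternatively carry the same sequences through graded local cohomology to extract $\reg\mathcal{R}(I)$ at the same time, which will be needed for the regularity and relation-type statements, but for the depth assertion the short exact sequences above suffice.
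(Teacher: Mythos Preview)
Your proposal is correct and follows essentially the same route as the paper: compute $\dep\mathfrak{m}A=d+2$ from the sequence $0\to\mathfrak{m}A\to A\to A/\mathfrak{m}A\to 0$, feed this into \eqref{regmAn} to obtain $\dep\overline{K}^n\ge d+1$, use $\overline{\mathcal{A}}\cong\overline{K}^{(n)}=\overline{K}^n$ together with $0\to\overline{\mathcal{A}}\to A\to\mathcal{R}(I)\to 0$ to get $\dep\mathcal{R}(I)\ge d$, and invoke Remark \ref{remd1} for the reverse inequality; the fiber statement comes from the hypersurface description in Theorem \ref{thmandev1}. Your write-up is in fact more explicit than the paper's about where the depth lemma is applied and why the shifts are harmless.
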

\begin{proof} From the short exact sequence, 
	\begin{equation}\label{regmA}
	0\rightarrow \mathfrak{m}A\rightarrow A\rightarrow A/\mathfrak{m}A\rightarrow 0
	\end{equation}
	we have $\dep \mathfrak{m}A=d+2$. Now from \eqref{regmAn} we have $\dep \overline{K}^n\geq d+1$. The sequence
	\begin{equation*}
	0\rightarrow\overline{\mathcal{A}}\rightarrow A\rightarrow \mathcal{R}(I)\rightarrow 0
	\end{equation*}
	 and the isomorphism $\overline{\mathcal{A}}=\frac{\overline{g}\overline{K}^{(n)}}{\overline{x_d}^n}\cong K^{(n)}$ now implies that $\dep\mathcal{R}(I)\geq d$. Since $\mathcal{R}(I)$ is not Cohen-Macaulay (Remark \ref{remd1}), we have $\dep\mathcal{R}(I)=d$. 
	
	Since $\mathcal{F}(I)\cong k[T_1,\dots,T_{d+1}]/(f)$ (Theorem \ref{thmandev1}), we have $\dep \mathcal{F}(I)=d$.
\end{proof}
We now define two important invariants namely relation type and regularity of the Rees algebra. The relation type $\rt (I)$ is defined to be the maximum $\underline{T}$-degree appearing in a homogeneous minimal generating set of the defining ideal of the Rees algebra.

For regularity, we use the definition as in \cite{Tr1}. Let $\mathcal{S}=\bigoplus_{n\geq 0}\mathcal{S}_n$ be a finitely generated standard graded ring over a Noetherian commutative ring $\mathcal{S}_0$. For any graded $\mathcal{S}$-module $M$ we denote by $M_n$, the homogeneous part of degree $n$ of $M$, and define
\begin{equation*}
a(M):= \begin{cases}
\mathrm{max}\{n~:~M_n\neq 0\} & \mbox{if~}M\neq 0 \\ 
-\infty & \mbox{if~}M= 0 
\end{cases} 
\end{equation*}
Let $\mathcal{S}_+$ be the ideal generated by the homogeneous elements of positive degree of $\mathcal{S}$. For $i\geq 0$, set $a_i(\mathcal{S}):=a(H_{\mathcal{S}_+}^i(\mathcal{S}))$, where $H_{\mathcal{S}_+}^i(.)$ denotes the $i$the local cohomology functor with respect to the ideal $\mathcal{S}_+$. The \textit{Castelnuovo-Mumford} regularity of $S$ defined as the number
\begin{equation*}
\reg~\mathcal{S}:=\mathrm{max}\{a_i(\mathcal{S})+i~:~i\geq 0\}
\end{equation*}
We also make use of Castelnuovo-Mumford regularity on short exact sequences as given in \cite[20.19]{E1}.

We compute the regularity of $\mathcal{R}(I)$ with respect to $\mathfrak{m},\mathfrak{n}$ and $(\underline{T})$. This means when computing the $\reg_\mathfrak{m}\mathcal{R}(I)$ we set $\deg x_i=1,~\deg T_i=0$. Analogously the grading scheme is set for $\reg_\mathfrak{n}\mathcal{R}(I)$ and $\reg_{(\underline{T})}\mathcal{R}(I)$.
\begin{thm}
	In the setting of Theorem \ref{thmandev1}, 
	\begin{equation*}
	\rt (I)=\reg ~\mathcal{F}(I)+1=\reg_{(\underline{T})}\mathcal{R}(I)+1=n(d-1)+1
	\end{equation*}
	Furthermore, $\reg_\mathfrak{m}\mathcal{R}(I)\leq n-1$ and $\reg_\mathfrak{n}\mathcal{R}(I)\leq (n+1)(d-1)$
\end{thm}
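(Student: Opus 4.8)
The plan is to read the statement off the structural facts already established---$\mathcal{F}(I)\cong k[T_1,\dots,T_{d+1}]/(f)$ with $\deg f=n(d-1)+1$ (Theorem \ref{thmandev1}), the isomorphism $\overline{\mathcal{A}}\cong\overline{K}^n$ with a twist of $1$ in the $\underline{T}$-grading and none in the $x$-grading (coming from $\overline{\mathcal{A}}=\bar{g}\overline{K}^{(n)}/\overline{x_d}^{\,n}$ together with $\deg g=(n,1)$ and $\overline{K}^{(n)}=\overline{K}^n$), the presentation \eqref{regmAn} of $\overline{K}^n$, and the fact that $A=S/(L_1,\dots,L_{d-1})$ is a complete intersection (Observation \ref{dimsym})---using throughout the behaviour of Castelnuovo--Mumford regularity along short exact sequences \cite[20.19]{E1}.

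First I would dispose of $\reg~\mathcal{F}(I)$ and of $\rt(I)$. Since $\mathcal{F}(I)$ is a hypersurface cut out by a form of degree $n(d-1)+1$, its minimal free resolution over $k[T_1,\dots,T_{d+1}]$ is $0\to k[\underline{T}](-(n(d-1)+1))\to k[\underline{T}]\to\mathcal{F}(I)\to 0$, so $\reg~\mathcal{F}(I)=n(d-1)$. For the relation type, the element $f'\in I_d(B_n(\varphi))\setminus I_d(B_{n-1}(\varphi))$ of bidegree $(0,n(d-1)+1)$ produced in the proof of Theorem \ref{thmandev1} reduces modulo $\mathfrak{m}$ to the minimal degree $n(d-1)+1$ generator of the ideal defining $\mathcal{F}(I)=S/(\mathcal{A}+\mathfrak{m}S)$; since that ideal is the image of $\mathcal{A}$ in $k[\underline{T}]$ and is minimally generated in degree $n(d-1)+1$, $\mathcal{A}$ must have a minimal generator of $\underline{T}$-degree $\geq n(d-1)+1$. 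Conversely, \eqref{regmAn} exhibits $\overline{K}^n$, hence $\overline{\mathcal{A}}$, as generated over $A$ in $\underline{T}$-degrees $\leq n(d-1)$, resp.\ $\leq n(d-1)+1$; lifting to $S$ and adjoining the $L_i$ (which have $\underline{T}$-degree $1$) shows $\mathcal{A}$ is generated in $\underline{T}$-degrees $\leq n(d-1)+1$. Thus $\rt(I)=n(d-1)+1=\reg~\mathcal{F}(I)+1$.

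The three regularity bounds all come from one chain of short exact sequences, carried out in each of the single gradings $(\underline{T})$, $\mathfrak{m}$, $\mathfrak{n}$ with the appropriate twists. In all three, $A=S/(L_1,\dots,L_{d-1})$ is a complete intersection on forms of degree $1$, $1$, $2$, so its Koszul resolution gives $\reg_{(\underline{T})}A=\reg_{\mathfrak{m}}A=0$ and $\reg_{\mathfrak{n}}A=d-1$; from \eqref{regmA} and $A/\mathfrak{m}A\cong k[\underline{T}]$ (regularity $0$ in all three gradings) one gets $\reg_{(\underline{T})}(\mathfrak{m}A),\reg_{\mathfrak{m}}(\mathfrak{m}A)\leq 1$ and $\reg_{\mathfrak{n}}(\mathfrak{m}A)\leq d-1$. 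Feeding these into \eqref{regmAn}, whose $i$-th summand carries the twist $(-i,-(n-i)(d-1))$ and so contributes $(n-i)(d-1)$, $i$, $i+(n-i)(d-1)$ to the three gradings respectively, gives $\reg_{(\underline{T})}\overline{K}^n\leq n(d-1)$, $\reg_{\mathfrak{m}}\overline{K}^n\leq n$, $\reg_{\mathfrak{n}}\overline{K}^n\leq(n+1)(d-1)$; incorporating the twist between $\overline{\mathcal{A}}$ and $\overline{K}^n$ ($+1$ for $(\underline{T})$ and $\mathfrak{n}$, $0$ for $\mathfrak{m}$) and then $0\to\overline{\mathcal{A}}\to A\to\mathcal{R}(I)\to 0$ yields $\reg_{(\underline{T})}\mathcal{R}(I)\leq n(d-1)$, $\reg_{\mathfrak{m}}\mathcal{R}(I)\leq n-1$, $\reg_{\mathfrak{n}}\mathcal{R}(I)\leq(n+1)(d-1)$. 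Finally the general inequality $\rt(I)\leq\reg_{(\underline{T})}\mathcal{R}(I)+1$ (cf.\ \cite{Tr1}), together with $\rt(I)=n(d-1)+1$, pins $\reg_{(\underline{T})}\mathcal{R}(I)=n(d-1)$, closing the displayed chain of equalities.

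I expect the main obstacle to be purely bookkeeping: correctly propagating the bigraded twists in \eqref{regmAn} through each of the three single gradings, and getting the twist relating $\overline{\mathcal{A}}$ to $\overline{K}^n$ exactly right. The only other point needing care is sharpness of the relation-type value---that $f'$ is not superfluous as a generator of $\mathcal{A}$, and that the generators read off from \eqref{regmAn} do not conspire after lifting to yield a smaller minimal generating degree---which is why the argument is organized as an upper bound ($\leq n(d-1)+1$, valid for any generating set of $\mathcal{A}$) against a lower bound ($\geq n(d-1)+1$, forced by the special fiber) rather than by exhibiting a minimal generating set of $\mathcal{A}$ explicitly.
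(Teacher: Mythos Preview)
Your proposal is correct and follows essentially the same route as the paper: both compute $\reg\mathcal{F}(I)$ from the hypersurface description, use the inequality $\rt(I)-1\leq\reg_{(\underline{T})}\mathcal{R}(I)$ to reduce the equality $\reg_{(\underline{T})}\mathcal{R}(I)=n(d-1)$ to an upper bound, and obtain all three regularity bounds by threading the sequences \eqref{regmA}, \eqref{regmAn}, and $0\to\overline{\mathcal{A}}\to A\to\mathcal{R}(I)\to 0$ together with the bigraded isomorphism $\overline{\mathcal{A}}\cong\overline{K}^n(0,-1)$. The only noticeable difference is in the relation-type computation: the paper simply reads $\rt(I)=n(d-1)+1$ off the description $\overline{\mathcal{A}}=\bar g\,\overline{K}^n/\overline{x_d}^{\,n}$ and the generators of $\overline{K}^n$, whereas you separate it into an upper bound (same source) and a lower bound coming from the special fiber---a slightly more cautious but equivalent argument.
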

\begin{proof}
	Since $\overline{\mathcal{A}}=\frac{\overline{g}\overline{K}^{(n)}}{\overline{x_d}^n}=\frac{\overline{g}\overline{K}^n}{\overline{x_d}^n}$, the relation type, $\rt(I)$, is easily computed by considering the generating set of $\overline{K}^n$. Thus $\rt(I)=n(d-1)+1$.
	
	Using Theorem \ref{thmandev1}, we have $\mathcal{F}(I)\cong k[T_1,\dots,T_{d+1}]/(f)$ where $\deg f=n(d-1)+1$. Thus $\reg~\mathcal{F}(I)=n(d-1)$.

	 Also, $\rt(I)-1\leq \reg_{(\underline{T})}\mathcal{R}(I)$. Therefore, in order to show the equality $\reg_{(\underline{T})}\mathcal{R}(I)+1=n(d-1)+1$, its enough to show that $\reg_{(\underline{T})}\mathcal{R}(I)\leq n(d-1)$.
	
	To compute the regularity we make use of  exact sequences \eqref{regmAn} and \eqref{regmA}. Notice that $A/\mathfrak{m}A\cong k[T_1,\dots,T_{d+1}]$. Since $A$ is a complete intersection domain defined by forms which are linear in both $\underline{x}$ and $\underline{T}$ variables, we have 
	\begin{align*}
	\reg_{(\underline{T})} A=\reg_{(\underline{T})} A/\mathfrak{m}A=0\\
\reg_\mathfrak{m}A=\reg_\mathfrak{m}A/\mathfrak{m}A=0\\
	\reg _\mathfrak{n} A=d-1,~\reg _\mathfrak{n} A/\mathfrak{m}A=0\
	\end{align*}
	Thus from \eqref{regmA} we have,
	\begin{equation*}
	\reg_{(\underline{T})}\mathfrak{m}A\leq 1,~\reg_\mathfrak{m}\mathfrak{m}A\leq 1,~\reg_\mathfrak{n}\mathfrak{m}A=d-1.
	\end{equation*}
	Let $M=\bigoplus\limits_{i=0}^{n-1}\mathfrak{m}A(-i,-(n-i)(d-1))$ and $N=\bigoplus\limits_{i=0}^n A(-i,-(n-i)(d-1))$. Now 
	\begin{align*}
	 \reg_{(\underline{T})}M&\leq n(d-1)+1 & \reg_{(\underline{T})}N&= n(d-1)
	 \\
	 \reg_\mathfrak{m}M&\leq n &\reg_{\mathfrak{m}}N&=n
	 \\
	 \reg_{\mathfrak{n}}M&=(n+1)(d-1) & \reg_\mathfrak{n}N&=(n+1)(d-1).
	\end{align*}
	Now using \eqref{regmAn} we have 
	\begin{align*}
	\reg_{(\underline{T})}\overline{K}^n&\leq n(d-1)\\\numberthis\label{regKn}
	\reg_\mathfrak{m}\overline{K}^n&\leq n\\
	\reg_\mathfrak{n}\overline{K}^n&\leq(n+1)(d-1).
	\end{align*}
	Next, consider the short exact sequence
	\begin{equation*}
	0\rightarrow \overline{\mathcal{A}}\rightarrow A\rightarrow \mathcal{R}(I)\rightarrow 0.
	\end{equation*}
	We now use the bigraded isomorphism $\overline{\mathcal{A}}\cong \overline{K}^n(0,-1)$ and the inequalities in \eqref{regKn} to show
	\begin{align*}
	\reg_{(\underline{T})}\mathcal{R}(I)&\leq n(d-1)
		\\\reg_\mathfrak{m}\mathcal{R}(I)&\leq n-1
	\\\reg_\mathfrak{n}\mathcal{R}(I)&\leq (n+1)(d-1).
	\end{align*}

\end{proof}

\bibliography{mainbib}{}
\bibliographystyle{amsplain}

\end{document}